\numberwithin{equation}{section}
\theoremstyle{plain}
\newtheorem{lemma}[equation]{Lemma}
\newtheorem{theorem}[equation]{Theorem}
\newtheorem{corollary}[equation]{Corollary}
\newtheorem{proposition}[equation]{Proposition}
\theoremstyle{definition}
\newtheorem{definition}[equation]{Definition}
\newtheorem{example}[equation]{Example}
\newtheorem{remark}[equation]{Remark}
\newcommand{\mF}{{\mathbb F}}
\newcommand{\mN}{{\mathbb N}}
\newcommand{\mZ}{{\mathbb Z}}
\newcommand{\cC}{{\mathcal C}}
\newcommand{\cD}{{\mathcal D}}
\newcommand{\cP}{{\mathcal P}}
\newcommand{\cS}{{\mathcal S}}
\DeclareMathOperator{\id}{id}
\DeclareMathOperator{\Hom}{Hom}
\DeclareMathOperator{\Mod}{Mod}
\DeclareMathOperator{\Ho}{Ho}
\DeclareMathOperator{\kernel}{ker}
\DeclareMathOperator{\image}{im}
\DeclareMathOperator{\Ext}{Ext}
\DeclareMathOperator{\Mat}{Mat}
\DeclareMathOperator{\enrhom}{\underline{Hom}}
\DeclareMathOperator{\homsp}{hom}
\DeclareMathOperator{\HH}{HH}
\DeclareMathOperator{\dHH}{dHH}
\DeclareMathOperator{\Chxes}{Ch}
\DeclareMathOperator{\biChxes}{biCh}
\DeclareMathOperator{\tChxes}{tCh}
\DeclareMathOperator{\dga}{dga}
\DeclareMathOperator{\bidga}{bidga}
\DeclareMathOperator{\tdga}{tdga}
\DeclareMathOperator{\sChxes}{sCh}
\DeclareMathOperator{\sdga}{sdga}
\DeclareMathOperator{\Tot}{Tot}
\DeclareMathOperator{\res}{res}
\newcommand{\eins}{\mathbf{1}}
\newcommand{\tensor}{\otimes}
\newcommand{\ovl}{\overline}
\newcommand{\ot}{\leftarrow}
\newcommand{\iso}{\cong}
\newcommand{\op}{{\mathrm{op}}}
\newcommand{\Tbar}{\ovl{T}}
\newcommand{\sclprod}[2]{\langle #1 , #2 \rangle }
\newcommand{\bimodu}[1]{\textrm{biMod-} #1}
\newcommand{\dtil}{\widetilde{d}}
\newcommand{\ftil}{\widetilde{f}}
\newcommand{\gtil}{\widetilde{g}}
\newcommand{\htil}{\widetilde{h}}
\newcommand{\mtil}{\widetilde{m}}
\newcommand{\ytil}{\widetilde{y}}
\newcommand{\ztil}{\widetilde{z}}
\newcommand{\alphatil}{\widetilde{\alpha}}
\newcommand{\betatil}{\widetilde{\beta}}
\newcommand{\mutil}{\widetilde{\mu}}
\newcommand{\psitil}{\widetilde{\psi}}
\newcommand{\rhotil}{\widetilde{\rho}}
\newcommand{\fgtil}{\widetilde{fg}}
\newcommand{\gbetatil}{(\widetilde{g \beta})}
\newcommand{\alphaftil}{(\widetilde{\alpha f})}
\newcommand{\eqtagwithsubscr}[1]{\stepcounter{equation}\tag*{(\theequation)$_{#1}$}}
\newcommand{\ainf}{$A_{\infty}$}
\newcommand{\dainf}{$dA_{\infty}$}
\newcommand{\dainfalg}{{\dainf}-algebra }
\newcommand{\dainfalgs}{{\dainf}-algebras }
\begin{document}
\title{DG-algebras and derived $A_{\infty}$-algebras}
\author{Steffen Sagave}
\address{Department of Mathematics, University of Oslo, Box 1053,  N-0316 Oslo, 
Norway}
\email{sagave@math.uio.no}
\date{\today}
\subjclass[2000]{Primary 16E45; Secondary 16E40, 18G10, 18G55, 55S30}
\begin{abstract}
  A differential graded algebra can be viewed as an $A_{\infty}$-algebra.
  By a theorem of Kadeishvili, a dga over a field admits a
  quasi-isomorphism from a minimal $A_{\infty}$-algebra. We introduce the
  notion of a derived $A_{\infty}$-algebra and show that any dga $A$ over an
  arbitrary commutative ground ring $k$ is equivalent to a minimal derived
  $A_{\infty}$-algebra. Such a minimal derived $A_{\infty}$-algebra model
  for $A$ is a $k$-projective resolution of the homology algebra of $A$
  together with a family of maps satisfying appropriate relations.

  As in the case of $A_{\infty}$-algebras, it is possible to recover the
  dga up to quasi-isomorphism from a minimal derived $A_{\infty}$-algebra
  model. Hence the structure we are describing provides a complete
  description of the quasi-isomorphism type of the dga.
\end{abstract}
\maketitle

\section{Introduction}
An $A_{\infty}$-algebra over a commutative ring $k$ is a $\mZ$-graded
$k$-module $A$ with structure maps $m_j \colon A^{\tensor j} \to A[2-j]$
satisfying certain relations.  $A_{\infty}$-algebras were introduced by
Stasheff in the 1960's \cite{Stasheff_associativity}. We refer to Keller's
survey \cite{Keller_introduction} for background and for references to the
various applications $A_{\infty}$-algebras have found in different fields
of mathematics.

A differential graded algebra over $k$ is an $A_{\infty}$-algebra with
$m_1$ the differential, $m_2$ the multiplication, and $m_j = 0$ for $j \geq
3$. If $k$ is a field, a theorem of Kadeishvili \cite{Kadeishvili_homology}
asserts that every $k$-dga $A$ admits a quasi-isomorphism of
$A_{\infty}$-algebras from a minimal $A_{\infty}$-algebra, where minimal
means $m_1 = 0$. The underlying graded $k$-algebra of such a {\em minimal
  model} for $A$ is the homology algebra $H_*(A)$. The quasi-isomorphism
type of $A$ can be recovered from its minimal model. Therefore, minimal
$A_{\infty}$-algebras provide an alternative description for
quasi-isomorphism classes of dgas \cite[\S 3.3]{Keller_introduction}, and
the minimal $A_{\infty}$-structure on $H_*(A)$ specifies precisely the
additional information needed to reconstruct the quasi-isomorphism type of
a dga from its homology \cite[\S 2.1]{Keller_introduction}.

From now on let $k$ be a commutative ring. Then the conclusion of
Kadeishvili's theorem will in general not hold if the homology $H_*(A)$ is
not $k$-projective since for example a $k$-linear cycle selection morphism
may not exist.

The aim of the present paper is to generalize the notion of an
$A_{\infty}$-algebra to a context in which a dga $A$ admits a minimal model
without restrictions on $k$ or $H_*(A)$, and thereby to provide a different
description of the quasi-isomorphism type of $A$.

Our approach is motivated by the following two observations. On the one
hand, $A_{\infty}$-algebras are closely related to Hochschild cohomology.
For example, the $m_3$ of the minimal $A_{\infty}$-structure on $H_*(A)$ is
a cocycle in the Hochschild complex of $H_* (A)$ whose cohomology class
encodes relevant information about $A$; see
\cite{Benson_K_S_realizability}. On the other hand, Hochschild cohomology
is for many purposes the `wrong' cohomology theory if we apply it to
$k$-algebras which are not $k$-projective. In this case, one should rather
consider the derived Hochschild cohomology, which is also known as Shukla
cohomology. Following Baues and Pirashvili \cite{Baues_P_shukla}, the
latter cohomology theory can be defined as the Hochschild cohomology of a
$k$-projective dga resolving the algebra.

Our strategy for finding minimal models in the general context is therefore
to first take a $k$-projective resolution of $H_*(A)$ in the direction of a
new grading, and then to look for the analog to the minimal
$A_{\infty}$-structure on this bigraded $k$-module. To take the additional
grading into account, we introduce the notion of a {\em derived
  \ainf{}-algebra} ({\em \dainfalg} for short). A \dainfalg is a
$(\mN,\mZ)$-bigraded $k$-module $E$ with structure maps $m^E_{ij} \colon
E^{\tensor j} \to E[i,2-(i+j)]$ for $i\geq 0$ and $j\geq 1$ satisfying
appropriate relations. It is {\em minimal} if $m_{01}^E = 0$.  Both dgas
and $A_{\infty}$-algebras can be considered as \dainfalgs concentrated in
degree $0$ of the $\mN$-grading.  The notion of equivalence suitable for
our purposes is that of an $E_2$-equivalence of \dainf{}-algebras, which is
detected on the iterated homology with respect to $m_{01}^E$ and
$m_{11}^E$.

\begin{theorem}\label{thm:min_models}
  Let $A$ be a differential graded algebra over a commutative ring $k$.
  There exists a $k$-projective minimal \dainfalg $E$ together with an
  $E_2$-equivalence $E \to A$ of \dainf{}-algebras. This {\em minimal
    \dainfalg model} $E$ of $A$ is well defined up to $E_2$-equivalences
  between $k$-projective minimal \dainf{}-algebras. Particularly, the
  bigraded $k$-module $E$ together with the differential $m_{11}^E$ and the
  multiplication $m_{02}^E$ is a $k$-projective resolution of the graded
  $k$-algebra $H_*(A)$.
\end{theorem}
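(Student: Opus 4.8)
The plan is to mimic Kadeishvili's proof of the classical minimal model theorem, with the homology algebra $H_*(A)$ replaced throughout by a $k$-projective resolution, which supplies the $k$-linear cycle selections that are unavailable over an arbitrary ground ring. The first step is to fix a $k$-projective resolution of the graded $k$-algebra $H_*(A)$ in the direction of the new $\mN$-grading: a non-negatively $\mN$-graded, levelwise $k$-projective bidga $E$ with $m^E_{01}=0$, carrying the differential $m^E_{11}$ and the multiplication $m^E_{02}$, together with a quasi-isomorphism $E\to H_*(A)$ of bigraded dgas, where $H_*(A)$ is concentrated in $\mN$-degree $0$. Such multiplicative resolutions exist over any commutative ring: one builds $E$ cell by cell in the $\mN$-direction out of free graded $k$-algebras, exactly as in the construction of a cofibrant replacement of an algebra. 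This $E$ already establishes the final assertion of the theorem, and what remains is to extend $(m^E_{01},m^E_{11},m^E_{02})$ to a full minimal \dainf{}-structure and to produce the asserted $E_2$-equivalence $E\to A$.

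For existence I would build the remaining operations $m^E_{ij}$ and the components $f_{ij}$ of a morphism $f\colon E\to A$ of \dainf{}-algebras by a simultaneous induction on $i+j$. The base case is the cycle selection $f_{01}$: since $E_0$ is $k$-projective and $Z_*(A)\to H_*(A)$ is surjective, lift the augmentation $E_0\to H_*(A)$ to a map $f_{01}\colon E_0\to Z_*(A)\subseteq A$. In the inductive step the \dainf{}-relation in which $m^E_{ij}$ first occurs and the morphism relation in which $f_{ij}$ first occurs isolate, respectively, a term built from $m^E_{ij}$ and the term $d_A f_{ij}\pm f_{01}m^E_{ij}$; all remaining summands involve only strictly-lower-degree data, which one checks — by the same associativity and coherence bookkeeping as for \ainf{}-algebras — to assemble into cocycles for the natural differentials on $\Hom(E^{\tensor j},E)$ and $\Hom(E^{\tensor j},A)$. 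One then picks $m^E_{ij}$ so that the \dainf{}-relation holds and the class of $f_{01}m^E_{ij}$ in $\Hom(E^{\tensor j},H_*(A))$ equals that of the obstruction expression; this is where the resolution is crucial, because $E^{\tensor j}$ is $k$-projective and $m^E_{11}$ is a resolution differential, so $\Hom(E^{\tensor j},E)$ computes the relevant derived (Shukla-type) cohomology and such a class can be realized. Subtracting this contribution leaves a cocycle in $\Hom(E^{\tensor j},A)$ whose class vanishes, because $E^{\tensor j}$ is $k$-projective and $d_A$ surjects onto the boundaries, so a bounding $f_{ij}$ exists. The result is a minimal \dainf{}-structure on $E$ and a morphism $f$ with $f_{01}$ lifting the augmentation, so $f$ induces the iterated-homology isomorphism $H_*(E,m^E_{11})\iso H_*(A)$ and is therefore an $E_2$-equivalence.

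For uniqueness, given two $k$-projective minimal \dainf{} models $f\colon E\to A$ and $f'\colon E'\to A$, I would run the same scheme to lift $f$ through $f'$: construct the components of a \dainf{}-morphism $g\colon E\to E'$ by induction on total degree, again using that each $E^{\tensor j}$ is $k$-projective and that the linear part of $f'$ carries the acyclicity of a resolution, so that the successive obstruction groups are manageable. Since $g$ is compatible with $f$ and $f'$ up to \dainf{}-homotopy it induces an isomorphism on iterated homology, hence is an $E_2$-equivalence between $k$-projective minimal \dainf{}-algebras, which is the claimed well-definedness. The sign and coherence computations showing the successive obstruction expressions are cocycles are lengthy but routine, just as for \ainf{}-algebras. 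The genuinely new point — and the crux — is the interplay of the resolution with the induction: one has to arrange $E$ so that at every stage the obstruction to extending lives in a group assembled from $k$-projective modules, so that both the freedom to modify $m^E_{ij}$ and the freedom to modify $f_{ij}$ remain available even though $H_*(A)$ itself admits no $k$-linear cycle selection; producing $E$ simultaneously $k$-projective, bounded below in the $\mN$-direction, multiplicative, and supple enough to carry all the higher operations is where the essential work lies.
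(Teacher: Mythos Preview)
Your strategy departs from the paper's in a way that creates a genuine gap. The paper does \emph{not} work directly with a chosen resolution $E$ of $H_*(A)$ and a map $E\to A$; it first resolves the \emph{dga} $A$ by a bidga $B$ (using a resolution model structure on simplicial dgas and the Dold--Kan correspondence), and only then runs the Kadeishvili-style induction for the map $E:=H^v_*(B)\to B$. The point of the intermediate $B$ is that $E$ is literally the $d_0$-homology of $B$, so the obstruction cocycle $\ztil_{ln}$ lands in $B$, its $d_0$-homology class \emph{is} an element of $E$, and that element is what one takes for $m^E_{ln}$. The bidga identities in $B$ are then exactly what force the new $m^E_{ln}$ to satisfy the \dainf{}-relations (this is the content of the paper's Lemmas~\ref{lem:zln_cocycle} and~\ref{lem:zln_works}).

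Your one-step scheme hides a difficulty that the paper's two-step scheme dissolves. In your induction you say you ``pick $m^E_{ij}$ so that the \dainf{}-relation holds and the class of $f_{01}m^E_{ij}$ equals that of the obstruction.'' But these are two distinct constraints on different parts of $m^E_{ij}$. Since $A$ sits in horizontal degree $0$, the morphism relation $(2.4)_{uv}$ only constrains the single component $m^E_{uv}\colon (E^{\otimes v})_{u,*}\to E_{0,*}$ (via $f_{01}$ and the augmentation $\epsilon$). The \dainf{}-relations $(2.2)_{u'v'}$ with $u'+v'=i+j+1$, however, couple the components of $m^E_{uv}$ in \emph{all} horizontal degrees to one another (and to the other $m^E_{ln}$ with $l+n=i+j$) through $m^E_{11}$ and $m^E_{02}$. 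Nothing in your argument explains why, having fixed the degree-$u$ component to hit the right class in $H_*(A)$, one can extend $m^E_{uv}$ over the rest of $E^{\otimes v}$ so that the internal \dainf{}-relations close up; the appeal to ``$\Hom(E^{\otimes j},E)$ computes the relevant derived cohomology'' is not a mechanism for producing such an extension. The paper in fact warns explicitly (in the remark following the proof) that it does \emph{not} claim every termwise $k$-projective resolution of $H_*(A)$ extends to a minimal \dainf{}-model; it only knows how to do this when $E$ arises as $H^v_*(B)$ for a bidga $B$ resolving $A$. Your proposal is essentially asserting the stronger statement without proof. The same issue propagates to your uniqueness argument, which relies on the same unjustified inductive step.
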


The first step in the proof of the theorem is to construct a \dainfalg $B$
with $m_{ij}^B =0$ if $i+j \geq 3$ together with a map $B \to A$ of
\dainfalgs such that the induced map $H_*(B,m_{01}^B) \to H_*(A)$ is a
$k$-projective resolution. To find such a $B$, we employ the cofibrant
replacement in a resolution model category structure on the category of
simplicial dgas. The second step is to replace $B$ by a minimal
\dainf{}-algebra, which is now possible since $H_*(B,m_{01}^B)$ is
$k$-projective.

If $E$ is a minimal \dainfalg model of the dga $A$, the sum $m_{03}^E +
m_{12}^E + m_{21}^E$ is a cocycle in a complex computing the derived
Hochschild cohomology of $H_*(A)$. Its cohomology class does not depend on
the choice of the minimal model. This derived Hochschild cohomology class
$\gamma_A \in \dHH^{3,-1}(H_*(A))$ associated with $A$ generalizes the
Hochschild cohomology class of a dga over a field studied by Benson,
Krause, and Schwede in \cite{Benson_K_S_realizability}. For example,
$\gamma_A$ determines all triple matric Massey products in $H_*(A)$.

Let $k = \mZ/p^2$ with $p$ an odd prime, and let $A$ be the dga which has a
copy of $k$ in degrees $0$ and $1$ and multiplication by $p$ as the
differential $A_1 \to A_0$. Then $H_*(A)$ is an exterior algebra over
$\mF_p$ on a class in degree one. In Example \ref{ex:minimalmodel}, we
describe a minimal \dainfalg model of $A$. The derived Hochschild
cohomology class $\gamma_A$ defined by means of this minimal model is
non-zero.
 
The information encoded in a minimal \dainfalg model is not restricted to
the resolution of $H_*(A)$ and the triple Massey products. In fact, we can
recover the dga from a minimal \dainfalg model:

\begin{theorem}\label{thm:antimin_models}
For a \dainfalg $E$, there is an associated differential graded algebra 
$\Tot \enrhom_E(E,E)$. If $E$ is a minimal \dainfalg model of a dga $A$,
the dga $\Tot \enrhom_E(E,E)$ is quasi-isomorphic to $A$. 
\end{theorem}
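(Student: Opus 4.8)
The first assertion is formal: the construction of $\enrhom$ made above equips, for any \dainf{}-algebra $E$, the morphism bicomplexes $\enrhom_E(M,N)$ of \dainf{}-$E$-modules with a strictly associative and strictly unital composition, so that $\enrhom_E(E,E)$, the endomorphisms of $E$ regarded as a module over itself, is a differential graded algebra object in bicomplexes, and $\Tot$ carries such an object to a dga. The real content is the second assertion; the plan is to produce a chain of quasi-isomorphisms of dgas
\[
  \Tot\enrhom_E(E,E) \;\simeq\; \Tot\enrhom_A(A,A) \;\simeq\; A,
\]
the right-hand one an anchoring computation for a dga and the left-hand one transported along the $E_2$-equivalence $g\colon E\to A$ of Theorem~\ref{thm:min_models}.

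For the anchor, regard the dga $A$ as a \dainf{}-algebra concentrated in $\mN$-degree $0$. Then $A$ is the free rank-one \dainf{}-$A$-module, the bicomplex $\enrhom_A(A,A)$ is a two-sided-bar-type resolution of the ordinary endomorphism dga of the free rank-one module, which is $A$ itself for the module conventions fixed above, and evaluation at the unit of $A$ induces a quasi-isomorphism of dgas $\Tot\enrhom_A(A,A)\to A$. For the transport, I would first note that $g$, being an $E_2$-equivalence of \dainf{}-algebras, induces a derived equivalence between the homotopy categories of \dainf{}-$E$-modules and of \dainf{}-$A$-modules; this is the \dainf{} analog of the fact that restriction of scalars along a quasi-isomorphism of dgas is a derived equivalence, with quasi-inverse the (derived) extension of scalars $g_!$. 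Now $g_!$ is a functor of differential bigraded categories, it carries the free rank-one \dainf{}-$E$-module $E$ to the free rank-one \dainf{}-$A$-module $A$, and hence it induces a map of dgas $\Tot\enrhom_E(E,E)\to\Tot\enrhom_A(A,A)$; since $g_!$ is a derived equivalence this map is a quasi-isomorphism, modulo the usual cofibrancy bookkeeping. Composing with the anchor identifies $\Tot\enrhom_E(E,E)$ with $A$.

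The crux is the transport step, i.e.\ the homotopy invariance of the assignment $E\mapsto\Tot\enrhom_E(E,E)$ along $g$, and it is worth saying what makes it delicate. It does \emph{not} follow from totalization preserving $E_2$-equivalences, which it does not: evaluation at the unit is an $E_2$-equivalence $\enrhom_E(E,E)\to E$, so were $\Tot$ to invert $E_2$-equivalences one would get $\Tot\enrhom_E(E,E)\simeq\Tot E$, and $\Tot E$ is only a $k$-projective resolution of $H_*(A)$, not of $A$. The invariance must instead be extracted from the $k$-projectivity and minimality of $E$ and from the fact that the modules being compared are free of rank one: concretely, that the comparison map respects the filtrations by $\mN$-degree (where $m_{01}^E=0$ enters), that the associated spectral sequences converge (where $k$-projectivity enters), and that the map is an isomorphism on $E_2$-pages. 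Equivalently, one must upgrade ``$g$ is an $E_2$-equivalence'' to ``extension of scalars along $g$ is a genuine derived equivalence of \dainf{}-module categories, matching the free rank-one modules''. Granting this, the remaining verifications (the behavior of $g_!$ on free rank-one modules and the anchoring computation for a dga) are routine, modulo the standard device of making comparison maps multiplicative by routing them through endomorphism dgas of direct sums.
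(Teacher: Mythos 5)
Your overall architecture (an anchor quasi-isomorphism $\Tot\enrhom_A(A,A)\simeq A$ followed by transport along the $E_2$-equivalence $E\to A$) is the same as the paper's, but two things go wrong. First, your diagnosis of why the transport is delicate rests on a false claim: totalization \emph{does} send $E_2$-equivalences of twisted chain complexes to quasi-isomorphisms. The filtration $F^p(\Tot X)_n=\bigoplus_{s\le p}X_{s,n-s}$ is exhaustive and bounded below because the horizontal grading lives in $\mN$, so the associated spectral sequence converges and the classical comparison theorem applies; the paper uses exactly this when it applies $\Tot$ to the $E_1$-equivalence $A\to\enrhom_A(A,A)$ and to the $E_2$-equivalences in Proposition \ref{prop:qequiv}. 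Your purported counterexample also fails on its own terms: since $E$ is minimal and $m_{11}^E$ resolves $H_*(A)$, the complex $\Tot E$ has homology $H_*(A)$ and is additively quasi-isomorphic to $A$ (apply $\Tot$ to $E\to A$), so the conclusion $\Tot\enrhom_E(E,E)\simeq\Tot E$ is true and harmless. What does \emph{not} come for free is multiplicativity: $\Tot E$ carries no dga structure, and a zig-zag of additive quasi-isomorphisms between endomorphism objects does not by itself identify the \emph{dgas} $\Tot\enrhom_E(E,E)$ and $\Tot\enrhom_A(A,A)$.

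Second, and this is the real gap, your transport step is asserted rather than proved. You invoke a derived equivalence between homotopy categories of \dainf{}-$E$-modules and \dainf{}-$A$-modules with quasi-inverse an extension of scalars $g_!$, matching free rank-one modules. No such theory is set up, and Remark \ref{rem:enrichment_pitfall} indicates why it is not routine: module maps of \dainf{}-modules are not $0$-cycles of $\enrhom$, and the induced maps on $\enrhom$ are not formal consequences of the enrichment. The paper instead constructs the explicit zig-zag $\enrhom_E(E,E)\xrightarrow{g_*}\enrhom_E(E,A)\xleftarrow{g^*\res_g}\enrhom_A(A,A)$, proves both arrows are $E_2$-equivalences (Corollaries \ref{cor:flowerstar_equiv} and \ref{cor:fupperstar_res_equiv}, resting on the maps $\psi$ and the homotopies of Lemmas \ref{lem:psi_map_mult}--\ref{lem:psiFE_factorization}), and then---precisely because neither arrow is multiplicative, only a map of one-sided modules over the totalized endomorphism dgas---runs the Keller/Schwede--Shipley ``quasi-equivalence'' bimodule argument to obtain a zig-zag of dga quasi-isomorphisms (Proposition \ref{prop:qequiv}). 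Your closing remark about routing comparison maps through endomorphism dgas gestures at the right device, but none of these constructions or verifications is carried out, and they constitute the content of the theorem.
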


Therefore, we have given an answer to the question of which additional
structure is needed to reconstruct the quasi-isomorphism type of a dga over
a commutative ground ring from its homology algebra: A $k$-projective
resolution of its homology equipped with a minimal \dainf{}-algebra
structure provides the necessary information.

A different approach to describe quasi-isomorphism types of dgas is to use
Postnikov systems and $k$-invariants. This approach is used by Dugger and
Shipley in \cite[\S 4]{Dugger_S_topological} for their study of topological
equivalences of dgas. One advantage of the minimal \dainfalg structures
introduced in the present paper is that they also exist for dgas with
homology in negative degrees.

One may apply our results to $\Ext$-algebras. If $M$ is a $k$-module, the
Yoneda-algebra $\Ext_k^*(M,M)$ is the homology of the endomorphism dga
$\Hom_k(P,P)$ of a $k$-projective resolution $P$ of $M$. Hence a minimal
\dainfalg model for $\Hom_k(P,P)$ provides a $k$-projective resolution of
$\Ext^*_k(M,M)$ with structure maps $m_{ij}$. This structure is well
defined up to $E_2$-equivalence, and it reflects the quasi-isomorphism type
of $\Hom_k(P,P)$. We calculate it for $\Ext_{\mZ}^*(\mZ/p, \mZ/p)$. This
generalizes the existence of a minimal $A_{\infty}$-structure on the
extension algebra of a module over an algebra over a field \cite[\S
3.3]{Keller_introduction}.

\subsection*{Organization}
In the second section, we define \dainfalgs and explain how they generalize
$A_{\infty}$-algebras. We introduce bidgas and twisted dgas and define the
notion of $E_2$-equivalence. In the third section, we explain how dgas can
be resolved by bidgas. This makes use of Bousfield's theory of resolution
model categories. The fourth section features the proof of Theorem
\ref{thm:min_models}.  In Section \ref{sec:ex_appl}, we define the derived
Hochschild cohomology class associated with a dga and describe the examples
mentioned in the Introduction.  In the sixth section, we introduce modules
over \dainfalgs and use the enrichment of module categories in twisted
chain complexes to prove Theorem \ref{thm:antimin_models}.  The last
section contains four lemmas needed in Section \ref{sec:minimal}.

\subsection*{Notation and conventions}
Throughout the paper, $k$ is a commutative ground ring, and all tensor
products are taken over $k$. We consider $(\mN, \mZ)$-bigraded $k$-modules
$E = (E_{st})_{s \in \mN, t \in \mZ}$.  Their shift is defined by
$E[i,j]_{s t} = E_{s - i, t - j}$. We sometimes refer to the $\mN$-grading
as the horizontal direction and to the $\mZ$-grading as the vertical
direction.

The tensor product of two bigraded $k$-modules $E$ and $F$ is the bigraded
$k$-module $E \tensor F$ with
\begin{equation} \label{equ:tensor_bigraded}
(E \tensor F)_{uv} = \bigoplus_{i+p=u,\, j+q=v} E_{ij} \tensor
E_{pq}.
\end{equation}
We follow the Kozul sign convention. For maps $f \colon W \to X[i,j], g
\colon Y \to Z[p,q]$ and elements $w \in W_{st}, y \in Y_{uv}$ this means
$(f \tensor g) (w \tensor y) = (-1)^{ \langle w,g \rangle } f(w) \tensor
g(y)$ with $\langle w,g \rangle = s p + t q$. Similarly, we get $(f \tensor
g) (f' \tensor g') = (-1)^{ \langle f',g \rangle } ff' \tensor gg' $ for
tensor products of composable maps. Tensor product, shift, and sign
convention for $k$-modules with a single $\mZ$-grading are given
accordingly.

We assume familiarity with the basics about $A_{\infty}$-algebras, as for
example described in Section 3 of Keller's survey
\cite{Keller_introduction}.  In our sign convention for
$A_{\infty}$-algebras, we follow Lef\`evre \cite{Lefevere_sur} and ask the
structure maps $m_j$ of an $A_{\infty}$-algebra to satisfy $\sum
(-1)^{rq+t} m_j(\eins^{\tensor r} \tensor m_{q} \tensor \eins^{\tensor t})
= 0$. Remark \ref{rem:signs_explanation} explains our choice of signs.

\section{\texorpdfstring{\dainfalgs and twisted chain complexes}{Derived
    A-infinity algebras and twisted chain complexes}}
Our main object of study is the following generalization of an
$A_{\infty}$-algebra whose definition was already outlined in the
Introduction.

\begin{definition} \label{def:dainf_obj} A {\em derived \ainf{}-algebra}
  ({\em \dainf{}-algebra} for short) is a $(\mN,\mZ)$-bigraded $k$-module
  $E$ with $k$-module maps $m_{ij} \colon E^{\tensor j} \to E[i,2-(i+j)]$
  and a unit map $\eta_E \colon k \to E$ for $i\geq 0, j \geq 1$ satisfying
\[
\sum_{\substack{i + p =u, \, j + q = v + 1\\ r + 1 + t = j \\ 
j,q \geq 1, \, i,r,p,t \geq 0}} 
(-1)^{rq+t+pj} m_{i j}^E(\eins^{\tensor r} \tensor m_{p q}^E \tensor \eins^{\tensor t}) = 0
\eqtagwithsubscr{uv}\label{equ:obj_rel}
\]
for all $u \geq 0, v \geq 1$ and the unit condition:
\begin{equation}\label{equ:obj_unitcondition}
\begin{split}
m_{01}^E \eta_E = 0,\quad  m_{02}^E(\eta_E \tensor \eins) = \eins = m_{02}^E(\eins \tensor
\eta_E), \quad \text{ and} \\  m_{ij}^E(\eins^{\tensor r-1} \tensor \eta_E
\tensor \eins^{\tensor j-r}) = 0
\text{ if } i+j \geq 3 \text{ with } 1 \leq r \leq j
\end{split}
\end{equation} 
\end{definition}

\begin{example}
  A derived \ainf{}-algebra concentrated in horizontal degree $0$ is the
  same as a (strictly unital) $A_{\infty}$-algebra with the $m_{0j}$ as
  structure maps: the equations \ref{equ:obj_rel} for $u=0$ are the
  $A_{\infty}$-relations.  Since a $k$-dga $A$ is a special instance of an
  $A_{\infty}$-algebra, it is a \dainfalg concentrated in horizontal degree
  $0$ for which only $m_{01}^A$ and $m_{02}^A$ may be non-zero.
\end{example}

\begin{definition} \label{def:dainf_maps} A map of \dainfalgs from $E$ to
  $F$ consists of a family of $k$-module maps $f_{p q} \colon E^{\tensor q}
  \to F[p, 1-(p+q)]$ with $p \geq 0$ and $q \geq 1$ satisfying
\[\begin{split} &\sum_{\substack{i+p=v, \, j+q = v+1 \\ r+1+t=j  \\ j,q \geq 1, i,r,p,t \geq 0}}
 (-1)^{rq+t+pj}  f_{i j}(\eins^{\tensor r} \tensor m^E_{p q} \tensor \eins^{\tensor t})\\
=& \sum_{\substack{ i + p_1 + \dots + p_j  = u \\
q_1 + \dots + q_j = v \\ 1 \leq j \leq v, \, p_s \geq 0, \, q_s \geq 1}}
(-1)^{\sigma} m^F_{i j}(f_{p_1  q_1} \tensor \dots \tensor f_{p_j  q_j})
\end{split}\eqtagwithsubscr{uv}\label{equ:mor_rel}\]
for $u \geq 0, v \geq 1$ and the unit condition
\begin{equation}\label{equ:mor_unitcondition}
f_{01} \eta_E = \eta_F \text{ and }
f_{pq}(\eins^{\tensor r-1} \tensor \eta_E \tensor \eins^{\tensor q-r}) = 0
\text{ if } p+q \geq 2  \text{ with } 1 \leq r \leq q.
\end{equation}  
The $\sigma$ governing the sign in \ref{equ:mor_rel} is
\begin{equation} \label{equ:map_sign}
\sigma = u+ \sum_{w=1}^{j-1} \left(j p_w + w (q_{j-w} - p_w) + q_{j-w}
\left(\sum_{s=j-w+1}^j p_s+q_s\right)\right). 
\end{equation} 
\end{definition}

\begin{example}
  Maps between \dainfalgs concentrated in horizontal degree $0$ are maps of
  $A_{\infty}$-algebras.
\end{example}

In Section \ref{sec:minimal}, we will give an equivalent but more economic
description of \dainfalgs and their maps in terms of a structure on the
reduced tensor algebra of $E[0,1]$. Amongst others, we employ it to define
the composition of \dainfalg maps and the notion of a module over a
\dainf{}-algebra, and it will also help to explain the signs. We use the
less streamlined Definition \ref{def:dainf_obj} here since both our
examples and our notion of equivalence fit in more naturally.

We single out an interesting special case of a \dainf{}-algebra:

\begin{definition} \label{def:bidgas} A {\em bidga} is a \dainfalg $B$ with
  $m_{ij}^B = 0$ if $i+j \geq 3$. A map $f\colon A \to B$ of bidgas is a
  map of \dainfalgs with $f_{pq} = 0$ for $p+q \geq 2$. We write $\bidga_k$
  for the resulting category.
\end{definition}

Of course, dgas give rise to bidgas concentrated in horizontal degree $0$.

\begin{remark}\label{rem:bidga_monoids}
  The category $\bidga_k$ admits a different description. Let $\biChxes_k$
  be the category of $(\mN,\mZ)$-graded bicomplexes of $k$-modules. Objects
  are bigraded $k$-modules $E$ with differentials $d_0 \colon E \to E[0,1]$
  and $d_1 \colon E \to E[1,0]$ satisfying $d_0 d_0 = 0$, $d_1 d_1 = 0$,
  and $d_0 d_1 = d_1 d_0$. The tensor product of bigraded $k$-modules
  induces a symmetric monoidal product on $\biChxes_k$. A monoid in
  $\biChxes_k$ is a bicomplex $B$ with a multiplication $\mu \colon B
  \tensor B \to B$ satisfying associativity, two Leibniz rules, and a unit
  condition. Setting $m_{01}^B = d_0$, $m_{11}^B = d_1$ and $m_{02}^B =
  \mu$, we observe that the six non-trivial formulas \ref{equ:obj_rel} for
  $u+v \leq 3$ are the associativity law, the two Leibniz rules, and the
  three identities for the differentials. After comparing the unit
  conditions, we see that $\bidga_k$ is the category of monoids in
  $\biChxes_k$. This is similar to the characterization of dgas as the
  monoids in the category of chain complexes with respect to the tensor
  product of chain complexes.
\end{remark}

\begin{definition} \label{def:tdgas} A {\em twisted dga} ($\tdga$ for
  short) is a \dainfalg $T$ for which only $m_{02}^T$ and $m_{i1}^T$ with
  $i \geq 0$ may be non-zero.  A map $f\colon T \to S$ of tdgas is a map of
  \dainfalgs with $f_{pq} = 0$ if $q \geq 2$.  We write $\tdga_k$ for the
  resulting category.
\end{definition}

Every bidga is a tdga. The name {\em twisted dga} is chosen in view of

\begin{definition}\cite{Gelfand_homological} \label{def:tch}
  A {\em twisted chain complex} $E$ is an $(\mN,\mZ)$-graded $k$-module
  with differentials $d_i^E \colon E \to E[i,1-i]$ for $i \geq 0$
  satisfying
  \begin{equation} \eqtagwithsubscr{u} \label{equ:tch_obj_rel} \sum_{i+p=u}
    (-1)^i d_i d_p = 0
  \end{equation}
  for $u \geq 0$. A map of twisted complexes $E \to F$ is a family of maps
  $f_i \colon E \to F[i,-i]$ satisfying
\begin{equation} \eqtagwithsubscr{u} \label{equ:tch_map_rel}
\sum_{i+p=u} (-1)^i f_i d^E_p = \sum_{i+p=u} d^F_i f_p.
\end{equation}
The composition of two maps $f \colon E \to F$ and $g \colon F \to G$ is
defined by $(gf)_u = \sum_{i+p=u} g_i f_p$. We write $\tChxes_k$ for the
resulting category.
\end{definition}

A class of examples arises from the fact that $\biChxes_k$ is a subcategory
of $\tChxes_k$. The signs in Definition \ref{def:tch} differ from those in
\cite{Gelfand_homological}. We explain our choice in Remark
\ref{rem:signs_explanation}. Some authors use the term `multicomplex'
instead of `twisted chain complex'.

\begin{remark}
  Similarly as in Remark \ref{rem:bidga_monoids}, the tensor product of
  graded modules defines a symmetric monoidal product on $\tChxes_k$.
  Monoids in $\tChxes_k$ are tgas. The resulting composition tdga maps is
  of course a special case of the composition of \dainfalg maps to be given
  in Definition \ref{def:map_comp}.
\end{remark}

Let $E$ be a \dainf{}-algebra. Identifying \ref{equ:obj_rel} for $v=1$ with
\ref{equ:tch_obj_rel}, we see that the $m^E_{i1}$ with $i \geq 0$ specify
the {\em underlying twisted chain complex} of $E$.  For a map $f \colon E
\to F$ of \dainf{}-algebras, the $f_{i1}$ form a map of the underlying
twisted chain complexes. We thus occasionally write $d_i$ for $m_{i1}^E$
when we refer to the underlying twisted chain complex of a \dainfalg $E$.

\begin{remark}
  The underlying twisted chain complex is one instance in which twisted
  chain complexes behave to \dainfalgs as unbounded chain complexes behave
  to \ainf{}-algebras.  This analogy turns out to be fruitful, and we
  frequently exploit it in Sections \ref{sec:minimal} and \ref{sec:anti}.
\end{remark}

Next we study the passage from twisted chain complexes to chain complexes.
There is a total complex functor
\[ \Tot \colon \tChxes_k \to \Chxes_k, \qquad \Tot_n X = \bigoplus_{s+t=n}
X_{st}, \] where the differential $d \colon \Tot_n X \to \Tot_{n-1} X$ is
$\sum_{i\geq 0} d_i^X$. A map of twisted complexes $f \colon X \to Y$
induces a map $\sum_{i\geq 0} f_i$ of total complexes. This is well defined
since only finitely many $d_i^X$ or $f_i$ leave a fixed $X_{st}$.

If we consider the total complex as a twisted chain complex concentrated in
horizontal degree $0$, there is a map $\rho_X \colon X \to \Tot X$ of
twisted chain complexes. Its component $(\rho_X)_i \colon X \to (\Tot
X)[i,-i]$ is the inclusion of $X_{i*}$ into $(\Tot X)[i,-i]_{i*}$. We can
interpret $\rho$ as a natural transformation from the identity on
$\tChxes_k$ to $\Tot$. The presence of this natural transformation is one
advantage of the category of twisted chain complexes over the category of
bicomplexes.

\begin{lemma}\label{lem:Tot_mult}
  The functor $\Tot \colon \tChxes_k \to \Chxes_k$ is strongly monoidal,
  and $\rho$ is a strong monoidal transformation. In particular, if $E$ is
  a tdga, then $\Tot E$ is a dga, and $\rho_E \colon E \to \Tot E$ is a map
  of tdgas.
\end{lemma}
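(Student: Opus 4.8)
The plan is to produce the two monoidal structure maps for $\Tot$ and then check compatibility with $\rho$ and the coherence axioms. First I would define the unit map: the monoidal unit of $\tChxes_k$ is $k$ concentrated in bidegree $(0,0)$, and $\Tot$ sends it to $k$ in degree $0$, which is the monoidal unit of $\Chxes_k$; the unit constraint is the identity. Next I would define, for twisted chain complexes $X$ and $Y$, the comparison map $\phi_{X,Y}\colon \Tot X \tensor \Tot Y \to \Tot(X \tensor Y)$. On the level of the underlying bigraded modules this is literally the identity: $(\Tot X \tensor \Tot Y)_n = \bigoplus_{a+b=n} \Tot_a X \tensor \Tot_b Y = \bigoplus_{a+b=n}\bigoplus_{s+t=a,\, p+q=b} X_{st}\tensor Y_{pq}$, and reindexing by $(s+p, t+q)$ identifies this with $\bigoplus_{u+v=n}(X\tensor Y)_{uv} = \Tot_n(X \tensor Y)$. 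So the only real content is that $\phi$ is a chain map, i.e. it intertwines $\sum_i d_i^{\Tot X \tensor \Tot Y}$ with $\sum_i d_i^{X \tensor Y}$.

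That chain-map verification is the main step, and it is where the Koszul signs must be handled carefully. On $\Tot X \tensor \Tot Y$ the differential is $d^{\Tot X}\tensor \id + (-1)^{(\cdot)}\id \tensor d^{\Tot Y}$ where $d^{\Tot X} = \sum_i d_i^X$ and the sign on the second summand is determined by the total degree in $\Tot X$. On $X \tensor Y$, the twisted-complex differentials are $d_i^{X\tensor Y} = \sum_{a+b=i}(d_a^X \tensor \id + (-1)^{?}\id \tensor d_b^Y)$ as dictated by the tensor product in $\tChxes_k$ (together with the bigraded Koszul rule $\langle x, g\rangle = sp+tq$ recorded in the Notation section). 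I would expand both sides over all the homogeneous summands $X_{st}\tensor Y_{pq}$ and check that, after identifying total degrees, the signs match. The key arithmetic point is that the Koszul sign $(-1)^{s p + t q}$ attached to an element of $X_{st}$ in the bigraded convention collapses to $(-1)^{(s+t)(p+q)}$ — the singly-graded Koszul sign on the total complex — once one sums $d^X$ past the full tensor factor $\Tot Y$; one checks this summand by summand. I expect this sign bookkeeping to be the only genuine obstacle; everything else is formal.

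Having established that $\phi$ is a natural isomorphism of chain complexes, I would note that the associativity and unit coherence hexagon/triangle for $(\Tot,\phi)$ hold because after forgetting differentials $\phi$ is the identity and the tensor product of bigraded modules is already strictly associative and unital in the relevant sense, so the coherence diagrams commute on the nose. For $\rho$: its component $(\rho_X)_i$ is the inclusion of $X_{i*}$, and I would check that the diagram expressing monoidality of $\rho$,
\[
\xymatrix{
X \tensor Y \ar[r]^-{\rho_X \tensor \rho_Y} \ar[d]_{\rho_{X\tensor Y}} & \Tot X \tensor \Tot Y \ar[d]^{\phi_{X,Y}} \\
\Tot(X\tensor Y) \ar@{=}[r] & \Tot(X\tensor Y),
}
\]
commutes — again essentially by definition, since all maps in sight are, underlyingly, inclusions or identities of summands of $\bigoplus X_{st}\tensor Y_{pq}$; one only has to confirm that $\rho_X \tensor \rho_Y$ introduces no sign, which follows from the sign convention for tensor products of maps applied to the degree-preserving inclusions $(\rho_X)_i$. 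Finally, the ``in particular'' claims are immediate consequences of strong monoidality: a tdga $E$ is a monoid in $\tChxes_k$ (this is the remark preceding the lemma), a strong monoidal functor carries monoids to monoids, so $\Tot E$ is a monoid in $\Chxes_k$, i.e. a dga; and a strong monoidal natural transformation between strong monoidal functors (here from the identity to $\Tot$) induces a map of monoids, so $\rho_E\colon E \to \Tot E$ is a map of tdgas (with $\Tot E$ viewed in horizontal degree $0$).
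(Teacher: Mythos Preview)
The paper states this lemma without proof, so there is no argument in the text to compare yours against. Your outline is the natural one: exhibit the unit and tensor constraints, verify that the tensor constraint is a chain isomorphism, observe that coherence is automatic because on underlying graded modules everything is a rearrangement of summands, and then deduce the monoid and monoid-map statements formally. The final paragraph is correct as stated.

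The gap is in your ``key arithmetic point.'' You claim that the bigraded Koszul sign $(-1)^{sp+tq}$ collapses to the singly-graded sign $(-1)^{(s+t)(p+q)}$. In the situation at hand, $g=d_i^Y$ has bidegree $(i,1-i)$, so for $x\in X_{st}$ the bigraded sign on $\eins\otimes d_i^Y$ is $(-1)^{si+t(1-i)}$; this depends on $i$ and is \emph{not} equal to the singly-graded sign $(-1)^{s+t}$ governing $\eins\otimes d^{\Tot Y}$ on $\Tot X\otimes\Tot Y$ (already for $i=0$ and $s$ odd the two differ). Hence the map you call ``literally the identity'' does not intertwine the two differentials, and your proposed verification cannot succeed as written. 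A nontrivial sign must be built into $\phi_{X,Y}$ (and, relatedly, the verification that $d^{\Tot X}=\sum_i d_i^X$ squares to zero under the convention $\sum_{i+p=u}(-1)^i d_i d_p=0$ already needs care). Once the correct sign is inserted the rest of your argument goes through, but finding and checking that sign is exactly the content of the lemma, not a side remark.
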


The total complex has a filtration defined by $F^p(\Tot X)_n = \bigoplus_{s
  \leq p} X_{s,n-s}$. The $E_2$-term of the resulting spectral sequence is
$E^2_{pq} \iso H^h_p H^v_q(X)$. Here we write $H^v_*(E)$ for the `vertical'
homology of $E$ with respect to the differential $d_0$. By
\ref{equ:tch_obj_rel} for $u=2$, $d_1$ induces a differential on
$H^v_*(E)$, and we write $H_*^h(H_*^v(E))$ for the resulting `horizontal'
homology group.

Maps of twisted complexes induce maps of spectral sequences: given $f
\colon E \to F$ in $\tChxes_k$, $f_{0}$ is a $d_0$-chain map on $E \to F$,
and formula \ref{equ:tch_map_rel} for $u = 1$ ensures that $H_*^v(f_{0})$
induces chain map with respect to the $d_1$-differential $H_*^v(d_1)$.
Hence we obtain $H_*^h(H_*^v(f_{0}))$ on the $E_2$-term, and so on.

\begin{definition} \label{def:E2equiv} A map $f \colon E \to F$ of twisted
  complexes is an {\em $E_2$-equivalence} if $H^h_s(H_t^v(f_{0}))$ is an
  isomorphism for $s \in \mN, t \in \mZ$. A map of bicomplexes, bidgas,
  twisted dgas or derived \ainf{}-algebras is an $E_2$-equivalence if the
  underlying map of twisted chain complexes is.
\end{definition}
Accordingly, $f$ is an $E_1$-equivalence if it induces an isomorphism on
$H^v_*$, and every $E_1$-equivalence is an $E_2$-equivalence.

The following lemma is immediate.
\begin{lemma}\label{lem:Tot_E2equiv}
  If $X$ is a twisted chain complex with $E_2$-homology concentrated in
  horizontal degree $0$, then $\rho_X \colon X \to \Tot X$ is an
  $E_2$-equivalence.
\end{lemma}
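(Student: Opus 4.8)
The plan is to unwind the definitions and reduce to a statement about first-quadrant-type spectral sequences of chain complexes. Regard $\Tot X$ as a twisted chain complex concentrated in horizontal degree $0$, so that its own filtration $F^p(\Tot X)$ from the excerpt is the trivial one: $H^v_*(\Tot X)$ in horizontal degree $0$ is simply $H_*(\Tot X)$, and $H^h_s H^v_t(\Tot X) = 0$ for $s \neq 0$. By Definition \ref{def:E2equiv}, showing $\rho_X$ is an $E_2$-equivalence amounts to showing $H^h_s H^v_t((\rho_X)_0)$ is an isomorphism for all $s,t$. The component $(\rho_X)_0 \colon X \to (\Tot X)[0,0]$ is, in vertical degree, the inclusion $X_{0*} \hookrightarrow (\Tot X)_{0*} = (\Tot X)[0,0]_{0*}$, i.e. the $d_0$-chain map picking out the $s=0$ column. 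So for $s \neq 0$ both sides vanish after applying $H^h_s H^v_t(-)$ — on the source by hypothesis, on the target automatically — and the map is trivially an isomorphism. The content is entirely the case $s = 0$.

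First I would make precise what $H^h_0 H^v_0((\rho_X)_0)$ is. On the source, $H^h_0 H^v_t(X) = \operatorname{coker}\big(H^v_t(d_1) \colon H^v_t(X_{1*}) \to H^v_t(X_{0*})\big)$, which by the hypothesis is concentrated: it is the whole $E_2$-page, and in particular $H^h_s H^v_t(X) = 0$ for $s \geq 1$ says the $E_2$-page (hence every later page, and the abutment $H_*(\Tot X)$) is a subquotient supported in the $s=0$ column. On the target, since $\Tot X$ sits in horizontal degree $0$, $H^h_0 H^v_t(\Tot X) = H_t(\Tot X)$. So the claim becomes: the composite
\[
H^h_0 H^v_t(X) \;=\; \operatorname{coker}\!\big(H^v_t(d_1)\big) \;\longrightarrow\; H_t(\Tot X)
\]
induced by $(\rho_X)_0$ is an isomorphism. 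The key observation is that the spectral sequence of the filtration $F^p(\Tot X)$ converges — it is the same filtration used to define the $E_2$-term, and convergence is unproblematic here because in each total degree $n$ only finitely many columns $X_{s,n-s}$ contribute below any fixed vertical bound relevant to a class, and more importantly because the $E_2$-page is concentrated in a single column $s=0$, so the spectral sequence degenerates at $E_2$ and there are no extension problems. Thus $H_t(\Tot X) \cong E^2_{0,t} = E^\infty_{0,t}$, and one checks that this isomorphism is exactly the map induced by $(\rho_X)_0$: a vertical $d_0$-cycle in $X_{0*}$ maps to a cycle in $\Tot X$ under $\rho_X$ (its higher $d_i$ vanish since there is nothing in columns $s < 0$), and this is precisely how the edge map $E^2_{0,t} \to H_t$ of the spectral sequence is described.

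The main obstacle — though it is more a matter of bookkeeping than of genuine difficulty — is the convergence/degeneration argument: one must be slightly careful because $X$ is only $(\mN,\mZ)$-graded, so the filtration is bounded below (at $p=0$) but a priori not above, and the vertical grading is over all of $\mZ$. However, concentration of the $E_2$-page in the column $s=0$ forces $d_r = 0$ for all $r \geq 2$ and kills all potentially problematic behaviour: $E^\infty_{0,t} = E^2_{0,t}$ and $E^\infty_{s,t} = 0$ for $s \neq 0$, and a bounded-below exhaustive filtration with such an $E^\infty$-page has abutment equal to $E^\infty_{0,*}$ with no filtration jumps. So I expect the proof to be short: reduce to $s=0$ by the hypothesis, identify the target's $E_2$-page with the abutment $H_*(\Tot X)$ using degeneration, and recognise $(\rho_X)_0$ as inducing the edge homomorphism, which is then an isomorphism by construction. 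This is presumably why the excerpt calls the lemma ``immediate''.
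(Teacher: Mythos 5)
Your argument is correct, and it is precisely the standard argument the paper has in mind when it declares the lemma ``immediate'' and omits the proof: reduce to the column $s=0$ (both sides vanish elsewhere), identify $H^h_0H^v_t((\rho_X)_0)\colon \coker H^v_t(d_1)\to H_t(\Tot X)$ with the edge homomorphism of the filtration spectral sequence of $\Tot X$, and conclude from degeneration at $E_2$ together with convergence of the bounded-below exhaustive filtration. Your care about convergence (the filtration starts at $p=0$ and is exhaustive, and concentration in one column kills all higher differentials and extension issues) is exactly the bookkeeping the paper leaves to the reader; nothing is missing.
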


\section{Resolving dgas by bidgas}
The aim of this section is to construct resolutions of dgas that induce
resolutions of their graded homology algebras. We saw that a dga can be
considered as a bidga concentrated in horizontal degree $0$. Particularly,
a graded $k$-algebra is a bidga concentrated in horizontal degree $0$ with
both differentials trivial.

\begin{definition} \label{def:alg_res} A {\em termwise $k$-projective
    resolution} of a graded algebra $\Lambda$ is a termwise $k$-projective
  bidga $E$ with trivial vertical differential together with an
  $E_2$-equivalence $E \to \Lambda$ of bidgas.
\end{definition}

Here we think of a bidga with trivial vertical differential as a
differential graded algebra in {\em graded} algebras. If $E \to \Lambda$ is
a termwise $k$-projective resolution, $E_{*, 0}$ is a dga which is
$k$-projective and whose homology is isomorphic to $\Lambda_0$ concentrated
in degree $0$. In other words, a termwise $k$-projective resolution is the
graded analog of the resolution of a $k$-algebra by a degreewise
$k$-projective dga.

If the graded algebra is the homology of a differential graded algebra, we
can ask even more:

\begin{definition} \label{def:dga_res} Let $A$ be a $k$-dga. A {\em
    $k$-projective $E_1$-resolution} of $A$ is a bidga $B$ together with an
  $E_2$-equivalence $B \to A$ of bidgas such that $H^v_{st}(B)$ is
  $k$-projective and the map $k \to H^v_{00}(B)$ induced by the unit $k \to
  B$ splits as a $k$-module map.
\end{definition}

A termwise $k$-projective $E_1$-resolution of a dga $A$ is a termwise
$k$-projective resolution of the graded algebra $H^v_*(A)$ which is induced
from a resolution of the dga and satisfies an additional unit condition.

\begin{example} \label{ex:E1res} Let $p$ be an odd prime and $k =\mZ/p^2$.
  The $k$-dga $A = \Lambda^*_{\mZ/p^2}(w)$ with $|w|=1$ and $d(w) = p$ is
  non-formal and has homology $\Lambda^*_{\mF_p}(\ovl{w})$ with
  $|\ovl{w}|=1$. It is also considered in \cite[\S 5.2]{Baues_P_shukla}.
  We will give a $k$-projective $E_1$-resolution of $A$.

  Consider the $(\mN,\mZ)$-bigraded $k$-algebra
  \[ V = k[a,b,u,v]/(a^2, b^2, au-bv, auv, buv, u^2, v^2), \] with
  $|a|=|b|=(0,0)$ and $|u|=|v|=(0,1)$. We set $d^V_0 (u) = a$ and $d^V_0(v)
  = b$. This extends uniquely to a differential on $V$. We think of $V$ as
  a dga concentrated in horizontal degree $0$. Its homology is
  $\Lambda_k^*(\ovl{z})$ with $\ovl{z}$ is represented by $au$. Let $H$ be
  the $(\mN, \mZ)$-bigraded $k$-algebra
  \[ 
  \Lambda_k^*(x) \tensor \Gamma_k^*(y) \quad \textrm{with} \quad |x|=(1,0),
  |y|=(2,0),
  \] 
  concentrated in vertical degree $0$. We define $B = V \tensor H$, so that
  $B_{st} = V_{s0} \tensor H_{0t}$. The multiplications on $V$ and $H$ turn
  $B$ into a bigraded $k$-algebra.  Since $d^H_0 =0$ for degree reasons,
  the $d^V_0$ on $V$ induces a vertical differential $d^B_0$ satisfying the
  Leibniz rule. We define $d^B_1(xy_i) = (p-pa+b)y_i$ and $d^B_1(y_i)=
  (p-b-ab)x y_{i-1}$, where $y_i$ is the divided power algebra generator in
  degree $2i$. The $d^B_1$ is linear with respect to $a, b, u,$ and $v$,
  and $d^B_0$ and $d^B_1$ commute.

  It is easy to see $H^v_*(B) = \Lambda_k^*(\ovl{x}, \ovl{z}) \tensor
  \Gamma_k^*(\ovl{y})$ as well as $d_1(\ovl{y}_i) = p \, \ovl{x}\,
  \ovl{y}_{i-1}$ and $d_1(\ovl{x} \,\ovl{y}_i) = p \ovl{y}_{i}$.
  Hence $B$ is a bidga for which $H^v_*(B)$ is bidegreewise $k$-projective.

  A map of bidgas $\alpha \colon B \to A$ is defined by setting
  \[ 
  \alpha(y_0) = 1, \alpha(a) = p, \alpha(b) = -p, \alpha(u)=w, \textrm{ and
  } \alpha(v)=-w.
  \]
  This is multiplicative and compatible with the relations among $a,b,u,$
  and $v$. Since $\alpha(d_1^B(x))=0$, the $\alpha$ is also compatible with
  $d^B_1$. Therefore, it is a bidga map.  On vertical homology $\alpha$
  induces $H^v_*(\alpha) \colon H^v_*(B) \to H^v_*(A)$ sending $\ovl{y}_0$
  to $1$ and $\ovl{z}$ to $\ovl{w}$. This is easily seen to be a resolution of
  $H^v_*(A)$.
\end{example}

Returning to the general case, a map of $k$-projective $E_1$-resolutions
from $B \to A$ to $B' \to A$ is a bidga map $f \colon B \to B'$ making the
obvious triangle commutative. It is defined to be an $E_2$-equivalence if
$f$ is one.

\begin{theorem} \label{thm:exis_uni_res} Every $k$-dga $A$ admits a
  $k$-projective $E_1$-resolution $B \to A$.  Two such resolutions can be
  related by a zig-zag of $E_2$-equivalences between $k$-projective
  $E_1$-resolutions.
\end{theorem}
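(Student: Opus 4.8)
The plan is to obtain both the existence and the uniqueness statements from the machinery of resolution model categories applied to the category $\sdga_k$ of simplicial $k$-dgas. First I would equip $\sdga_k$ with the $E_2$-model structure in the sense of Bousfield, using as the distinguished class of injective objects the dgas whose homology is $k$-projective (or an appropriate class of $k$-modules closed under suitable operations); the relevant homotopy groups are the homotopy groups of the underlying simplicial $k$-module together with the homology, so that a map is an $E_2$-equivalence in the model-categorical sense precisely when it induces isomorphisms on the bigraded homotopy. The key observation is then that for such a simplicial dga, forming the Moore complex (normalization) in the simplicial direction produces a bidga: the simplicial direction becomes the horizontal $\mN$-grading with horizontal differential $m_{11}$, the internal dga differential becomes $m_{01}$, and the multiplication becomes $m_{02}$, while $m_{ij}=0$ for $i+j\geq 3$ since a bidga is exactly a monoid in $\biChxes_k$ (Remark \ref{rem:bidga_monoids}). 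The vertical homology $H^v_{*}$ of the resulting bidga computes the homology of the dga in each simplicial degree, and the $E_2$-spectral sequence of this bidga agrees with the spectral sequence of the simplicial dga.

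For existence, I would take a cofibrant replacement $B \to cA$ of the constant simplicial dga on $A$ in this $E_2$-model structure. Cofibrancy forces the underlying simplicial $k$-module of $B$ to be degreewise $k$-projective, hence $B$ passes to a termwise $k$-projective bidga under normalization, and the bigraded homotopy $\pi_*H_*(B)$ is exactly $H^v_{st}(B)$, which is $k$-projective because cofibrant objects in a resolution model structure have homotopy in the chosen class. The $E_2$-equivalence $cA \to A$ (the map collapsing the simplicial direction, which is a weak equivalence since $cA$ is already simplicially homotopy-constant) composed with $B \to cA$ gives an $E_2$-equivalence $B \to A$ after passing to total complexes and using $\rho$; the unit splitting condition $k \to H^v_{00}(B)$ is arranged by insisting the cofibrant replacement is taken relative to the unit, or by a standard retract-of-a-cell argument since the unit $k \to A$ is split in simplicial degree $0$. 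For uniqueness, given two such resolutions $B \to A$ and $B' \to A$, I would lift them to cofibrant simplicial dgas over $cA$; any two cofibrant replacements of the same object are connected by a zig-zag of weak equivalences between cofibrant objects (in fact a single span, by the usual factorization argument in a model category), and normalizing this zig-zag yields a zig-zag of $E_2$-equivalences between $k$-projective $E_1$-resolutions, compatible with the maps to $A$.

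The main obstacle I expect is verifying that the $E_2$-model structure on $\sdga_k$ exists and has the properties needed — in particular, that the chosen class of injective objects satisfies Bousfield's axioms (closure under retracts, enough injectives in the relevant sense, and the key condition that the simplicial homotopy groups in the chosen class detect $E_2$-equivalences), and that cofibrant objects really do have $k$-projective homology in each simplicial degree rather than merely $k$-projective underlying modules. A secondary technical point is matching the spectral sequence of the simplicial dga (arising from the skeletal filtration) with the $E_2$-spectral sequence of the associated bidga (arising from the horizontal filtration $F^p(\Tot X)$), so that the model-categorical notion of $E_2$-equivalence coincides with the one in Definition \ref{def:E2equiv}; this should follow from the Eilenberg–Zilber theorem and the fact that normalization is lax monoidal, but it requires care with the identification of the differentials. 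Once these foundational points are in place, the passage from the simplicial picture to bidgas is essentially formal, and the zig-zag in the uniqueness statement is exactly the zig-zag of weak equivalences between cofibrant replacements.
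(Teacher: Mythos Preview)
Your strategy is the paper's strategy: put a Bousfield resolution model structure on $\sdga_k$, take a cofibrant replacement of the constant object on $A$, and pass to bidgas via Dold--Kan. Two small corrections and one genuine gap.

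First, the terminology is inverted: the paper uses the \emph{projective} (dual) version of Bousfield's framework, with $\cP=\{S^n(k)\mid n\in\mZ\}$ as a class of projective models in $\Ho(\Chxes_k)$; the $\cP$-projective objects are exactly the complexes with degreewise $k$-projective homology (Lemma~\ref{lem:eno_Pproj}), which is what you describe, but they are not ``injective objects.'' Second, the paper passes to bidgas using the associated chain complex functor $C$ (with the shuffle map) rather than the normalization $N$; this matters because $C$ commutes with vertical homology, and because---as the paper stresses---$N$ and $\Gamma$ do \emph{not} give an equivalence between $\sdga_k$ and $\bidga_k$ (the adjunction unit fails to be monoidal).

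The genuine gap is in your uniqueness argument. You write ``lift them to cofibrant simplicial dgas over $cA$,'' but $B$ and $B'$ are given only as bidgas, and there is no evident way to lift an arbitrary bidga to a simplicial dga whose normalization recovers it \emph{as a monoid}. The paper's fix is to go the other direction: apply $\Gamma$ (made lax monoidal via the Alexander--Whitney map) to $B_1\to A\leftarrow B_1'$, take functorial cofibrant replacements in $\sdga_k$, apply $C$, and then use the natural $E_2$-equivalence $C(\Gamma(B))\to B$ of bidgas (Lemma~\ref{lem:dold_kan_prop}) to splice the result back onto the original resolutions. This is Lemma~\ref{lem:reso_welldef}, and it is precisely the step your sketch is missing. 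The unit-splitting condition is handled not by working relative to the unit but by a cell-by-cell analysis of the free algebra functor showing that cofibrant objects in $\sdga_k$ are built from $k$ by termwise $\cP$-free maps (Proposition~\ref{prop:mstr_sdga}).
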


The proof of the theorem occupies the rest of the section. It uses
Quillen's language of model categories \cite{Quillen_homotopical,
  Hovey_model}. The point is to use the cofibrant replacement in an
appropriate model structure on the category $\sdga_k$ of simplicial dgas.

We start by briefly recalling Bousfield's general setup for resolution
model structures \cite{Bousfield_resolution}, or rather the dual version of
\cite{Jardine_E2}. Let $\cC$ be a pointed model category. Let $\cP$ be a
class of group-objects in $\Ho(\cC)$. A morphism $p \colon X \to Y$ in
$\Ho(\cC)$ is a {\em $\cP$-epi} if $p_* \colon [P,X]_n \to [P,Y]_n$ is onto
for every $P \in \cP$ and $n\geq 0$, where $[X,Y]_n = \Ho(\cC)(\Sigma^n
X,Y)$. An object $A \in \Ho(\cC)$ is {\em $\cP$-projective} if $p_* \colon
[A,X]_n \to [A,Y]_n$ is onto for all $\cP$-epis $p$ and $n\geq 0$. Maps in
$\cC$ (resp.  objects in $\cC$) are $\cP$-epis (resp. $\cP$-projective) if
they are in $\Ho(\cC)$.  A map in $\cC$ is a {\em $\cP$-projective
  cofibration} if it has the left lifting property with respect to all
$\cP$-epi fibrations. A map $X \to Y$ in $\cC$ is {\em $\cP$-free} if it is
a composition of an inclusion $X \to X \coprod P$ with $P$ cofibrant and
$\cP$-projective and an acyclic cofibration $X \coprod P \to Y$.  We say
that {\em $\Ho(\cC)$ has enough $\cP$-projectives} if every object $X \in
\Ho(\cC)$ admits a $\cP$-epi $Y \to X$ with $\cP$-projective source. In
this case, we call $\cP$ a class of projective models in $\Ho(\cC)$.

The next definition makes use of the standard model structure on the
category of simplicial groups (\cite[II.3.7]{Quillen_homotopical}), the
Reedy model structure on $s\cC$ \cite[VII.2.12]{Goerss_J_simplicial}, and
the notion of the latching object $L_n X$ and its structure map $L_n X \to
X_n$ for $X \in s\cC$ \cite[VII.1.5]{Goerss_J_simplicial}.

\begin{definition} \label{def:Pmodel_str} Let $f \colon X \to Y$ be a map
  in $s\cC$.
\begin{enumerate}[(i)]
\item $f$ is a {\em $\cP$-equivalence} if $f_* \colon [P,X]_n \to [P,Y]_n$
  is a weak equivalence in simplicial groups for all $P \in \cP$ and $n
  \geq 0$.
\item $f$ is a {\em $\cP$-fibration} if it is a Reedy-fibration and $f_*
  \colon [P,X]_n \to [P,Y]_n$ is a fibration of simplicial groups for all
  $P \in \cP$ and $n \geq 0$.
\item $f$ is a {\em $\cP$-cofibration} if the induced maps $X_n
  \coprod_{L_nX} L_nY \to Y_n$ are $\cP$-projective cofibrations for all
  $n\geq 0$.
\end{enumerate}
\end{definition}

\begin{theorem} \label{thm:res_str} \cite[Theorem
  3.3]{Bousfield_resolution} Let $\cC$ be a right proper pointed model
  category and let $\cP$ be a class of projective models for $Ho(\cC)$.
  With the above $\cP$-equivalences, $\cP$-fibrations, and
  $\cP$-cofibrations, $s\cC$ becomes a right proper model category. It is
  simplicial with respect to the external simplicial structure.
\end{theorem}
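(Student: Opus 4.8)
The plan is to verify Quillen's axioms MC1--MC5 for $s\cC$ directly, reducing everything to two inputs: the standard model structure on the category of simplicial groups (into which the $\cP$-indexed mapping objects $[P,X]_\bullet$ take values) and the hypothesis that $\Ho(\cC)$ has enough $\cP$-projectives. Since $s\cC$ inherits limits and colimits from $\cC$ levelwise, MC1 is immediate. The axioms MC2 (two-out-of-three) and MC3 (retracts) are formal consequences of the definitions, because $\cP$-equivalences and $\cP$-fibrations are tested by conditions on the simplicial groups $[P,X]_\bullet$, which are themselves closed under two-out-of-three and retracts, while $\cP$-cofibrations are tested on the relative latching maps $X_n\coprod_{L_nX}L_nY\to Y_n$, using that the $\cP$-projective cofibrations and the $\cP$-epi fibrations of $\cC$ form a weak factorization system. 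Establishing that weak factorization system is the first concrete task: the lifting property is the definition, closure under retracts and cobase change is formal, and the factorization of an arbitrary map of $\cC$ is produced by an ``enough $\cP$-projectives'' argument (map a $\cP$-projective onto the target by a $\cP$-epi, form the evident factorization, iterate as needed).

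Next I would carry out the Reedy/latching analysis relating the structure on $s\cC$ to that on simplicial groups. Applying $[P,-]$ to the relative latching map $X_n\coprod_{L_nX}L_nY\to Y_n$ should be shown, by an adjunction argument, to compute the $n$-th relative latching map of $[P,f]_\bullet$; hence $f$ is a $\cP$-cofibration precisely when each $[P,f]_\bullet$ is a cofibration of simplicial groups, and dually $f$ is a $\cP$-fibration precisely when it is a Reedy fibration and each $[P,f]_\bullet$ is a fibration of simplicial groups. Intersecting with the weak-equivalence conditions and using that the model structure on simplicial groups is generated by the (acyclic) cofibrations coming from $\partial\Delta^n\to\Delta^n$ and $\Lambda^n_j\to\Delta^n$, one obtains clean descriptions of the acyclic $\cP$-fibrations and acyclic $\cP$-cofibrations. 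The two lifting axioms MC4 then follow by induction on the Reedy degree: at stage $n$ the lifting problem against the $n$-th relative latching map is, after applying each $[P,-]$, a lifting problem in simplicial groups, which has a solution by the model structure there, and these solutions assemble compatibly because of the weak factorization system on $\cC$.

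The crux is the factorization axiom MC5. Factoring $f$ as a $\cP$-cofibration followed by an acyclic $\cP$-fibration can be done by induction over skeleta: having built the factorization through Reedy degree $n-1$, one factors the induced map out of the $n$-skeletal pushout using the weak factorization system on $\cC$ from the first step, thereby arranging the $n$-th relative latching map to be a $\cP$-projective cofibration. The opposite factorization, as an acyclic $\cP$-cofibration followed by a $\cP$-fibration, is the genuinely hard part and is where I expect the main obstacle to lie: one must enlarge the source of $f$ by a transfinite sequence of ``$\cP$-free'' cell attachments, gluing on cells of the form $P\otimes\Delta^n$ relative to $P\otimes\partial\Delta^n$ in the external simplicial structure, indexed by obstruction classes in the homotopy groups of the mapping simplicial groups, so as to force $[P,-]_\bullet$ of the first factor to become an acyclic cofibration of simplicial groups for every $P\in\cP$. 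Proving that this process converges and, crucially, that the resulting map is a true $\cP$-equivalence rather than merely a levelwise $\cP$-epi requires the spiral exact sequence comparing $\pi_*[P,X]_\bullet$ with the natural homotopy groups of $X$; this is the technical heart of Bousfield's argument. Once MC5 is in place, right properness of $s\cC$ follows from right properness of $\cC$ together with the facts that $[P,-]$ preserves the relevant pullbacks and that every simplicial group is fibrant (so simplicial groups are right proper), and the simplicial axiom SM7 for the external simplicial structure is checked by the same latching analysis applied to the pushout-product of a $\cP$-cofibration with a cofibration of simplicial sets.
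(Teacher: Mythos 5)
This statement is not proved in the paper at all: it is quoted verbatim from Bousfield (Theorem 3.3 of \emph{Cosimplicial resolutions and homotopy spectral sequences in model categories}) and used as a black box, so there is no internal proof to compare yours against. Judged against Bousfield's actual argument, your outline has the right global shape --- establish the weak factorization system on $\cC$ generated by the $\cP$-projective cofibrations and $\cP$-epi fibrations, do the (cofibration, acyclic fibration) factorization by skeletal induction, build the other factorization out of $\cP$-free cell attachments, and control the result with the spiral exact sequence --- and you correctly identify the spiral exact sequence as the technical heart. But the sketch contains a step that fails as stated: you claim that applying $[P,-]$ to the relative latching map $X_n\coprod_{L_nX}L_nY\to Y_n$ computes the $n$-th relative latching map of $[P,f]_\bullet$, and hence that $f$ is a $\cP$-cofibration precisely when each $[P,f]_\bullet$ is a cofibration of simplicial groups. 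The latching object is a colimit in $\cC$, and $[P,-]=\Ho(\cC)(\Sigma^\bullet P,-)$ does not commute with those colimits, so $L_n([P,X]_\bullet)\neq[P,L_nX]$ in general; $\cP$-cofibrations are defined intrinsically in $\cC$ and are \emph{not} detected by $[P,-]$. The correct statement, which is what Bousfield actually uses, is one-directional and only for the special class of $\cP$-free maps: these induce free (hence cofibration) maps of simplicial groups under $[P,-]$. Consequently your proposed proof of MC4 --- transporting the lifting problem to simplicial groups degree by degree --- does not go through; in the real argument the liftings are produced in $\cC$ itself from the weak factorization system, using a separate characterization of the acyclic $\cP$-fibrations.

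Beyond that, the genuinely hard content --- that the transfinite $\cP$-free attachment converges, that the resulting first factor is a $\cP$-equivalence and not merely levelwise surjective on $[P,-]_*$, and the construction and exactness of the spiral sequence itself --- is named but not argued, and it cannot be reconstructed from the hints given. So what you have is a plausible roadmap with one broken reduction, not a proof; for the purposes of this paper the honest course is the one the author takes, namely to cite Bousfield.
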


We frequently use the following feature of this model structure:

\begin{lemma} \label{lem:cof_term} A $\cP$-cofibration in $s\cC$ is
  termwise a $\cP$-projective cofibration in $\cC$.
\end{lemma}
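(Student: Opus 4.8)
The plan is to imitate the classical proof that a Reedy cofibration is a termwise cofibration, with the word ``cofibration'' replaced throughout by ``$\cP$-projective cofibration''. The only feature of $\cP$-projective cofibrations that the argument needs is that, being characterized by the left lifting property against the $\cP$-epi fibrations, they form a class closed under pushout, coproduct, transfinite composition, and retract. So write $f \colon X \to Y$ for the given $\cP$-cofibration, so that by Definition~\ref{def:Pmodel_str}(iii) each latching map $\ell_n f \colon X_n \coprod_{L_n X} L_n Y \to Y_n$ is a $\cP$-projective cofibration, and prove by induction on $n$ that $f_n \colon X_n \to Y_n$ is a $\cP$-projective cofibration. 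Since the latching category of $s\cC$ at level $0$ is empty, $L_0 X$ is the initial object of $\cC$, hence $f_0 = \ell_0 f$ and the base case is immediate.

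For the inductive step I would fix $n \geq 1$, assume $f_m$ is a $\cP$-projective cofibration for all $m < n$, and factor $f_n$ as
\[
X_n \longrightarrow X_n \coprod_{L_n X} L_n Y \xrightarrow{\ \ell_n f\ } Y_n .
\]
The second map is a $\cP$-projective cofibration by the hypothesis on $f$, and the first is the pushout of $L_n f \colon L_n X \to L_n Y$ along $L_n X \to X_n$; so it suffices to show that $L_n f$ is a $\cP$-projective cofibration. For this I would invoke the standard presentation of the latching object of a simplicial object as a finite iterated pushout: $L_n(-)$ depends only on the $(n-1)$-truncation, and $L_n X$ is assembled from the objects $X_m$ with $m<n$ by successively gluing along the latching maps $L_m X \to X_m$, exactly as in the Reedy-category bookkeeping carried out in \cite[Ch.~VII]{Goerss_J_simplicial}. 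Feeding $f$ into this presentation exhibits $L_n f$ as obtained from the maps $f_m$ and $\ell_m f$ with $m < n$ by pushouts and coproducts; since these are all $\cP$-projective cofibrations — the $f_m$ by the inductive hypothesis, the $\ell_m f$ by the hypothesis on $f$ — and since the class is closed under those operations, $L_n f$ is a $\cP$-projective cofibration, which finishes the induction.

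The one point requiring care, and the only place where anything beyond formal nonsense enters, is the combinatorial claim that the latching object functor on simplicial objects is built by iterated pushouts from the lower-degree data in a way natural in $f$. This is precisely the content of the Reedy machinery in \cite[Ch.~VII]{Goerss_J_simplicial}, and because that argument uses only closure of the relevant maps under pushout, coproduct, and composition, it applies verbatim with $\cP$-projective cofibrations in place of cofibrations; I would simply cite it rather than reproduce the bookkeeping. Alternatively, one can note that the statement is already implicit in Bousfield's construction of the resolution model structure \cite[Theorem~3.3]{Bousfield_resolution}, whose verification of the lifting axioms proceeds degree by degree and so in effect establishes exactly this termwise assertion.
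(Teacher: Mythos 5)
Your proposal is correct and is essentially the paper's argument: the paper simply cites \cite[Lemma 15.3.9]{Hirschhorn_model} (compare \cite[Lemma 5.3]{Bousfield_resolution}), which is exactly the standard Reedy induction you spell out, using only that $\cP$-projective cofibrations, being defined by a left lifting property, are closed under pushout, coproduct, (transfinite) composition, and retract. Unwinding the citation into the explicit induction on degree, with the latching-object bookkeeping deferred to \cite[Ch.~VII]{Goerss_J_simplicial}, is a faithful expansion of the same proof.
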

\begin{proof}
  This is a consequence of \cite[Lemma 15.3.9]{Hirschhorn_model}, compare
  \cite[Lemma 5.3]{Bousfield_resolution}.
\end{proof}

Our main example is the category $\Chxes_k$ with the projective model
structure \cite[Theorem~2.3.11]{Hovey_model}. Since $\Chxes_k$ is stable,
all objects in $\Ho(\Chxes_k)$ are cogroup objects. Let $\cP$ be the set of
objects $\{ S^n(k) | n \in \mZ\}$ in $\Ho(\Chxes_k)=\cD(k)$, where
$S^n(k)_i = k$ if $i=n$ and $0$ otherwise. The $\cP$-epis are the maps
which induce surjections in homology.

\begin{lemma} \label{lem:eno_Pproj} With this $\cP$, the category
  $\Ho(\Chxes_k)$ has enough $\cP$-projectives. An object in
  $\Ho(\Chxes_k)$ is $\cP$-projective if and only if it has degreewise
  $k$-projective homology.
\end{lemma}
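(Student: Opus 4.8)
The plan is to reduce everything to the single computation
\[
[S^n(P), Y]_m \iso \Hom_k(P, H_{n+m}(Y))
\]
valid for every projective $k$-module $P$. Since $S^n(P)$ is a bounded (indeed one-term) complex of projective $k$-modules, it is cofibrant in the projective model structure on $\Chxes_k$, and every object of $\Chxes_k$ is fibrant, so $\Ho(\Chxes_k)(S^n(P),Y)$ is the group of chain-homotopy classes of chain maps $S^n(P)\to Y$. The Hom-complex is $\Hom_k(P,Y_{\ast+n})$ with differential induced by $d_Y$ (as $d_{S^n(P)}=0$), and since $\Hom_k(P,-)$ is exact, taking $H_0$ gives $\Hom_k(P,H_n(Y))$; replacing $Y$ by $\Sigma^m$-shifts yields the formula for general $m$. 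Because the $\cP$-epis are by definition the homology-surjections and $\Hom_k(P,-)$ preserves surjections, this shows $[S^n(P),-]_m$ sends $\cP$-epis to surjections, so every $S^n(P)$ with $P$ projective is $\cP$-projective.

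Next I would note that a coproduct of $\cP$-projectives is $\cP$-projective: coproducts in $\Ho(\Chxes_k)=\cD(k)$ are computed by direct sums of complexes, $\Sigma^m$ preserves them, and $[\,\coprod_i X_i,-\,]_m\iso\prod_i[X_i,-]_m$ carries $\cP$-epis to products of surjections, hence to surjections. In particular every complex of the shape $\bigoplus_n S^n(P_n)$ with all $P_n$ projective — equivalently, every complex with zero differential and degreewise projective terms — is $\cP$-projective, and its homology $H_n=P_n$ is degreewise projective.

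For ``enough $\cP$-projectives'' and for the forward implication of the characterization I would use one explicit resolution. Given $X$, choose for each $n$ a free $k$-module $F_n$ with a surjection $\varepsilon_n\colon F_n\twoheadrightarrow H_n(X)$; by the first step each $\varepsilon_n$ is realized by a morphism $S^n(F_n)\to X$ in $\Ho(\Chxes_k)$, and assembling these gives $q\colon Q:=\bigoplus_n S^n(F_n)\to X$. Since homology commutes with direct sums and $S^n(F_n)$ contributes only in degree $n$, we get $H_n(q)=\varepsilon_n$, so $q$ is a $\cP$-epi with $\cP$-projective source; hence $\Ho(\Chxes_k)$ has enough $\cP$-projectives. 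If moreover $X$ is itself $\cP$-projective, then since $q$ is a $\cP$-epi the map $q_*\colon[X,Q]_0\to[X,X]_0$ is onto, so $\id_X=qs$ for some $s$; thus $X$ is a retract of $Q$ in $\Ho(\Chxes_k)$, and $H_n(X)$ is a direct summand of $H_n(Q)=F_n$, hence projective.

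For the converse I would observe that if $H_\ast(X)$ is degreewise projective, then the surjections $Z_n(X)\twoheadrightarrow H_n(X)$ split, and a choice of sections assembles into a chain map $\bigoplus_n S^n(H_n(X))\to X$ (zero differential on the source, image in cycles) which is visibly a quasi-isomorphism; so $X\iso\bigoplus_n S^n(H_n(X))$ in $\Ho(\Chxes_k)$, and this is $\cP$-projective by the second step. The only mildly delicate point is the identification of morphism groups out of $S^n(P)$ in the unbounded derived category, together with the compatibility of coproducts in $\Ho(\Chxes_k)$ with those in $\Chxes_k$ and with homology; both are standard, so I expect no real obstacle.
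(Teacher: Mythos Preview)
Your proof is correct and follows essentially the same strategy as the paper's: exhibit a $\cP$-epi onto any object from a sum built out of spheres, use the retract argument for the forward implication, and for the converse identify the object (up to isomorphism or retract) with such a sum. The paper's argument is terser and works only with sums of copies of $S^n(k)$, whereas you allow $S^n(P)$ for arbitrary projective $P$ and spell out the identification $X\simeq\bigoplus_n S^n(H_n(X))$; since every projective module is a summand of a free one, the two formulations are interchangeable.
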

\begin{proof}
  Since every object is the codomain of a $\cP$-epi mapping out of a sum of
  objects in $\cP$, there are enough $\cP$-projectives.  Applying this to a
  $\cP$-projective object $X$ shows that $X$ is a retract of a sum of
  objects in $\cP$. Hence $X$ has degreewise $k$-projective homology.  On
  the other hand, an object with degreewise $k$-projective homology is a
  retract of a sum of objects of $\cP$, and therefore $\cP$-projective.
\end{proof}

All objects in $\Chxes_k$ are fibrant, so $\Chxes_k$ is right proper.
Theorem \ref{thm:res_str} yields the {\em $\cP$-model structure} on
$\sChxes_k$.

Analogous to the classical case of simplicial abelian groups and simplicial
rings, simplicial dgas can be considered as the monoids in the category
$\sChxes_k$ of simplicial chain complexes with respect to the termwise
tensor product.

\begin{proposition} \label{prop:mstr_sdga} The forgetful functor $U \colon
  \sdga_k \to \sChxes_k$ creates a cofibrantly generated model structure on
  $\sdga_k$ in which a map $f$ is a $\cP$-fibration or a $\cP$-equivalence
  if $U(f)$ is. Cofibrant objects in $\sdga_k$ are termwise
  $\cP$-projective cofibrant in $\Chxes_k$. The unit map of a cofibrant
  replacement in $\sdga_k$ splits in homology.
\end{proposition}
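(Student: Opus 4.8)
The plan is to obtain the model structure on $\sdga_k$ by transfer along the free--forgetful adjunction $T\colon \sChxes_k \rightleftarrows \sdga_k \colon U$, where $T$ is the free (termwise tensor algebra) functor and $\sdga_k$ is identified with the category of monoids in $(\sChxes_k,\tensor)$ for the termwise tensor product. First I would record that the $\cP$-model structure on $\sChxes_k$ coming from Theorem~\ref{thm:res_str} (applied to $\cC=\Chxes_k$) is cofibrantly generated, with generating cofibrations $I$ and generating acyclic cofibrations $J$ built from those of the projective model structure on $\Chxes_k$ together with the skeletal filtration of simplicial objects; for $\cC=\Chxes_k$ this is part of Bousfield's construction \cite{Bousfield_resolution} (see also \cite{Jardine_E2}). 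Since $\tensor$ preserves filtered colimits and reflexive coequalizers in each variable, $U$ creates these, so $\sdga_k$ is cocomplete, the free monoid $TX$ exists for every $X$, and $T(I)$ and $T(J)$ permit the small object argument in $\sdga_k$. By the general lifting principle \cite[Theorem~11.3.2]{Hirschhorn_model}, declaring $f$ a weak equivalence or fibration precisely when $U(f)$ is one will then produce the desired model structure, provided we check that $U$ sends every relative $T(J)$-cell complex to a $\cP$-equivalence.

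I expect this acyclicity statement to be the main obstacle, and I would treat it by the standard route for categories of monoids: verify that $(\sChxes_k,\tensor)$ with the $\cP$-model structure satisfies the pushout-product axiom and the monoid axiom. The pushout-product axiom reduces, via the description of $\cP$-cofibrations through their latching maps together with Lemma~\ref{lem:cof_term} and the analogous statement for maps of simplicial sets, to the known pushout-product axiom for the projective model structure on $\Chxes_k$, applied in each simplicial degree. For the monoid axiom one shows that tensoring a $\cP$-acyclic cofibration with an arbitrary object of $\sChxes_k$ is again a $\cP$-equivalence and that this class is closed under the pushouts and transfinite compositions occurring in a cell complex; here one reduces to the generators $j\in J$, and mapping the sphere objects $S^n(k)\in\cP$ in transports the question to the corresponding statement in $\Chxes_k$, where it holds because the disk objects are contractible and flat up to the relevant homotopies. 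Granting the monoid axiom, the usual filtration of a pushout $TX\amalg_{TA}TB$ along $T(j)$, whose subquotients are assembled from $\tensor$-powers of $j$, shows that each stage, and hence any $T(J)$-cell complex, has underlying map a $\cP$-equivalence. This completes the transfer and identifies the $\cP$-fibrations and $\cP$-equivalences in $\sdga_k$ as those detected by $U$.

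For the remaining two assertions, the same filtration argument shows that $U$ carries cofibrations in $\sdga_k$ to $\cP$-cofibrations in $\sChxes_k$: this is clear for the generators $T(i)$ with $i\in I$, and is stable under pushouts and transfinite compositions because $\cP$-projective cofibrations in $\Chxes_k$ are closed under $\tensor$ with an arbitrary object and under the relevant colimits. Hence if $R\in\sdga_k$ is cofibrant, the map from the initial object $k$ to $R$ has $\cP$-cofibrant underlying object, and Lemma~\ref{lem:cof_term} gives that each $R_n$ is $\cP$-projective cofibrant in $\Chxes_k$. Finally, let $cA\xrightarrow{\sim}A$ be a cofibrant replacement. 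The cellular description exhibits (a retract of) $cA$ as built from the unit monoid $k$ by freely attaching cells, so on underlying objects the unit $\eta\colon k\to cA$ is the inclusion of the unit summand; an induction over the cell filtration, using that the subquotients contributed by the cells have $\cP$-projective vertical homology, shows that $H^v_*(\eta)$ is a split monomorphism of simplicial $k$-modules. Passing to the retract is harmless: if $k\to X$ admits a homology retraction $r$ and $cA$ is a retract of $X$ via $i\colon cA\to X$, then $r\circ H^v_*(i)$ retracts $H^v_*(\eta)$. This proves that the unit map of a cofibrant replacement in $\sdga_k$ splits in homology.
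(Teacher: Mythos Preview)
Your transfer strategy via the free--forgetful adjunction is the same starting point as the paper, but the acyclicity check diverges in a way that leaves a genuine gap. You propose to verify the monoid axiom for the $\cP$-model structure on $\sChxes_k$, but your sketch (``mapping the sphere objects $S^n(k)$ in transports the question to $\Chxes_k$'') does not work: tensoring with an arbitrary object does not commute with $[S^n(k),-]$, and the non-Reedy generating acyclic cofibrations coming from \cite[Lemma~2.7]{Jardine_E2} are not termwise acyclic, so the reduction to the monoid axiom in $\Chxes_k$ fails. The paper sidesteps the monoid axiom entirely: it uses the weaker form of \cite[Lemma~2.3(2)]{Schwede_S_algebras} (after Quillen \cite[II.4.9]{Quillen_homotopical}), which only requires a fibrant replacement functor and path objects for fibrant objects. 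Both are supplied by the Reedy model structure on $\sdga_k$ (built from the model structure on $\dga_k$) and the external simplicial cotensor, since $\cP$-fibrant coincides with Reedy fibrant.

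Your argument for the second and third statements also has a gap. You claim that $U$ carries cofibrations in $\sdga_k$ to $\cP$-cofibrations because ``$\cP$-projective cofibrations in $\Chxes_k$ are closed under $\tensor$ with an arbitrary object''; this is false (tensoring with a non-cofibrant object destroys cofibrancy, and $\cP$-projectivity is not preserved either). The paper does not prove that $U$ preserves cofibrations. Instead it argues inductively: starting from the termwise $\cP$-projective cofibrant initial object $k$, it shows via the filtration of \cite[Lemma~6.2]{Schwede_S_algebras} that attaching a generating cofibration to a \emph{termwise $\cP$-projective cofibrant} object yields a transfinite composition of termwise $\cP$-free maps. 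The key technical input is Lemma~\ref{lem:ppa_projcof}, a pushout-product statement for the specific classes $K$ (termwise acyclic cofibrations) and $L$ (termwise coproduct-with-$\cP$-projective-cofibrant inclusions), not for arbitrary $\cP$-cofibrations. The unit splitting then falls out of this same description of the cofibrant replacement as a transfinite composition of $\cP$-free maps.
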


\begin{proof}
  First notice that the $\cP$-model structure on $\sChxes_k$ is cofibrantly
  generated. To obtain the generating cofibrations $I$ and generating
  acyclic cofibrations $J$, one has to add the generating acyclic
  cofibrations of the Reedy model structure \cite[Proposition
  15.6.24]{Hirschhorn_model} to the two sets of maps described in
  \cite[Lemma 2.7]{Jardine_E2}.

  The projective model structure on $\Chxes_k$ creates a model structure on
  $\dga_k$ \cite[\S 5]{Schwede_S_algebras}, which gives rise to a Reedy
  model structure on $\sdga_k$. As the matching object functor is defined
  as a limit and the forgetful functor $U$ commutes with limits, a map $f$
  in $\sdga_k$ is a Reedy fibration or a Reedy weak equivalence if and only
  if $U(f)$ is.
  
  Since all objects in simplicial abelian groups are fibrant, the
  $\cP$-fibrant objects in $\sChxes_k$ coincide with the Reedy fibrant
  objects. Therefore, the Reedy fibrant replacement functor in $\sdga_k$ is
  a fibrant replacement functor for the $\cP$-model structure to be
  constructed.
  
  The cotensor of the external simplicial structure on $\sChxes_k$ is
  defined as a limit \cite[p. 371]{Goerss_J_simplicial} and therefore
  extends to a functor
  \[\homsp_{\sdga_k}\colon  \cS^{\op} \times \sdga_k \to  \sdga_k.\]  
  As the $\cP$-model structure on $\sChxes_k$ is simplicial,
  $\homsp_{\sdga_k}(-,Y)$ maps weak equivalences to $\cP$-equivalences and
  cofibrations to $\cP$-fibrations provided $Y$ is $\cP$-fibrant. Hence the
  simplicial path object for a fibrant object qualifies as a path object
  for the $\cP$-model structure on $\sdga_k$ we are heading for.
  
  Since the $\cP$-model structure on $\sChxes_k$ is cofibrantly generated,
  the verification of the axioms for the $\cP$-model structure on $\sdga_k$
  now follows from applying \cite[Lemma 2.3(2)]{Schwede_S_algebras} to the
  left adjoint of $U$. As in Quillen's original argument
  \cite[II.4.9]{Quillen_homotopical}, one can reduce the assumptions to
  presence of a path object and the fibrant replacement functor as
  established above.
  
  The generating cofibrations $T(I)$ of the $\cP$-model structure on
  $\sdga_k$ are obtained by applying the free associative algebra functor
  $T$ to $I$.  If $X$ is $\cP$-projective and cofibrant, $ - \tensor X$
  preserves $\cP$-free maps. Cobase change also preserves $\cP$-free maps.
  The maps in $I$ are either termwise acyclic cofibrations in the
  projective model structure on $\Chxes_k$, or termwise of the form $X \to
  X \coprod P$ with $P \in \cP$. By the careful analysis of the free
  associative algebra functor carried out in \cite[Lemma
  6.2]{Schwede_S_algebras}, it follows that for a termwise $\cP$-projective
  cofibrant $X$ in $\sdga_k$, the map $X \to Y$ obtained from attaching a
  generating cofibration $f \in T(I)$ to $X$ is is isomorphic to the
  inclusion of $X$ into the colimit of a countable sequence of termwise
  $\cP$-free maps: inspecting the proof the second statement of the cited
  lemma, we see that it is enough to provide the last four statements and
  Lemma \ref{lem:ppa_projcof}.

  The initial object of $\sdga_k$ is itself termwise $\cP$-projective.
  Hence the map from the initial object $k$ to a cofibrant replacement
  constructed by means of the small object argument applied to $T(I)$ is
  termwise a transfinite composition of $\cP$-free maps. Therefore, it
  splits in homology. In particular, any cofibrant replacement is termwise
  $\cP$-projective cofibrant. By the retract argument, all cofibrant
  objects are termwise $\cP$-projective cofibrant.
\end{proof}

Let $f \colon X \to Y$ and $g\colon X'\to Y'$ be maps in $\Chxes_k$ or
$\sChxes_k$. We define the pushout product map $f \Box g$ to be the
induced map $Y \tensor X' \coprod_{X \tensor X'} X \tensor Y' \to Y \tensor
Y'$.

\begin{lemma} \label{lem:ppa_projcof} Let $K$ be the class of termwise
  acyclic cofibrations in $\sChxes_k$, and let $L$ be the class of maps in
  $\sChxes_k$ which are termwise of the form $X \to X \coprod P$ with $P$ a
  $\cP$-projective cofibrant object. Then $(K \cup L) \Box (K \cup L)
  \subset (K \cup L)$.
 \end{lemma}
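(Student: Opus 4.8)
The plan is to reduce the claim to four separate containments, since $K\cup L$ decomposes and the pushout product distributes over unions: one has $(K\cup L)\Box(K\cup L) = (K\Box K)\cup(K\Box L)\cup(L\Box K)\cup(L\Box L)$, and by symmetry of $\Box$ the cases $K\Box L$ and $L\Box K$ are interchangeable. So I would treat three cases: $K\Box K$, $K\Box L$, and $L\Box L$. Throughout I work termwise: a map in $\sChxes_k$ belongs to $K$ (resp.\ $L$) exactly when each simplicial level does, and since the tensor product, pushout, and hence the pushout product are all computed levelwise in $\sChxes_k$, it suffices to verify the three containments for maps in $\Chxes_k$ of the relevant shapes. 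This immediately kills any simplicial bookkeeping.

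First I would dispose of $K\Box K$. The projective model structure on $\Chxes_k$ is a monoidal model structure (see \cite[Proposition 4.2.13]{Hovey_model}), so the pushout product of an acyclic cofibration with a cofibration is again an acyclic cofibration; in particular $K\Box K\subset K$. Next, $L\Box L$: a map of the form $X\to X\coprod P$ is the cobase change of $0\to P$ along $0\to X$, and the pushout product $(X\to X\coprod P)\Box(X'\to X'\coprod P')$ works out to be the cobase change of $0\to P\tensor P'$ along the structure map into $X\tensor X'\coprod X\tensor P'\coprod P\tensor X'$. The key point is that $P\tensor P'$ is again $\cP$-projective and cofibrant: it is cofibrant because $\Chxes_k$ is monoidal and $P,P'$ are cofibrant, and it is $\cP$-projective because, by Lemma \ref{lem:eno_Pproj}, $P$ and $P'$ have degreewise $k$-projective homology, the Künneth theorem then shows $P\tensor P'$ has degreewise $k$-projective homology (the relevant $\operatorname{Tor}$ terms vanish), and degreewise $k$-projective homology characterizes $\cP$-projectivity. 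Hence $L\Box L\subset L$.

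The remaining mixed case $K\Box L$ is the one I expect to require the most care. Here one pushout-product factor is an acyclic cofibration $f\colon X\to Y$ in the projective model structure, the other is $X'\to X'\coprod P$ with $P$ cofibrant $\cP$-projective. The pushout product $f\Box(X'\to X'\coprod P)$ is isomorphic to $f\tensor\operatorname{id}_{P}$ sitting over the evident pushout, i.e.\ it is the cobase change of the map $X\tensor P\to Y\tensor P$ along $X\tensor P\to X\tensor P\coprod X\tensor X'\coprod\dots$; more cleanly, it identifies with $f\Box(0\to P)$, cobase-changed. Now $f\Box(0\to P)$ is just $f\tensor\operatorname{id}_P\colon X\tensor P\to Y\tensor P$. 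Since $f$ is an acyclic cofibration and $P$ is cofibrant, monoidality of the projective model structure gives that $f\tensor\operatorname{id}_P$ is again an acyclic cofibration; cobase change preserves acyclic cofibrations, so the whole pushout product lies in $K$. Thus $K\Box L\subset K\subset K\cup L$. Combining the three cases with the distributivity identity from the first paragraph yields $(K\cup L)\Box(K\cup L)\subset K\cup L$, as claimed.

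I should be careful about one subtlety in the $L\Box L$ and $K\Box L$ analyses: the literal pushout-product object is not $P\tensor P'$ (resp.\ $Y\tensor P$) on the nose but a colimit, and one must check that the canonical map out of the appropriate corner of the pushout square is the expected cobase change. This is a routine diagram chase using that $\tensor$ preserves colimits in each variable and that the relevant squares defining $X\to X\coprod P$ are pushouts of $0\to P$; the main obstacle is bookkeeping these identifications correctly rather than any conceptual difficulty, and once the pushout-product object is identified as a cobase change of $P\tensor P'$ (resp.\ of $f\tensor\operatorname{id}_P$) the conclusion is immediate from Künneth plus monoidality of $\Chxes_k$.
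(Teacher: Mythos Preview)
Your proposal is correct and follows essentially the same approach as the paper: both reduce termwise to $\Chxes_k$, split into the three cases $K\Box K$, $K\Box L$, $L\Box L$, invoke the pushout product axiom for the first two, and for the third show that $P\tensor P'$ is again $\cP$-projective cofibrant via its degreewise $k$-projective homology. The only cosmetic differences are that you are more explicit about the cobase-change identifications and you appeal to the K\"unneth theorem where the paper phrases the same computation as a bicomplex spectral sequence collapsing to a single line.
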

\begin{proof} 
  The pushout product axiom \cite[Definition
  3.1]{Schwede_Shipley_monoidal} holds in $\Chxes_k$. It states that
  if $f$ and $g$ are cofibrations in $\Chxes_k$, then the pushout
  product map $f\Box g$ is a cofibration which is acyclic if $f$ or
  $g$ is. The axiom immediately implies $K \Box K \subset K$. Since
  objects in $\cP$ are cofibrant, it also shows that for $P \in \cP$
  and $f \in K$, the map $P \tensor f$ is again in $K$. This argument
  implies $K \Box L \cup L \Box K \subset K$. If both maps are in $L$,
  it is enough to show that for two $\cP$-projective cofibrant objects
  $P, P' \in \cP$, their product $P \tensor P'$ is $\cP$-projective
  cofibrant. Cofibrancy is clear. Since both have $k$-projective
  homology, the $E_2$-term of the bicomplex spectral sequence
  computing the homology of $P \tensor P'$ consists only of a single
  non-trivial line in which all entries are $k$-projective. By Lemma
  \ref{lem:eno_Pproj}, $P\tensor P'$ is $\cP$-cofibrant.
\end{proof}

We need an instance of the Dold-Kan correspondence to go back and forth
between simplicial chain complexes and bicomplexes. The version in
\cite[III Theorem 2.5]{Goerss_J_simplicial} is sufficiently general to
apply to our context.

The associated chain complex and normalized chain complex are functors 
\[ C \colon \sChxes_k \to \biChxes_k \quad \text{ and } \quad N \colon
\sChxes_k \to \biChxes_k.\] The first is defined by $C(X)_{s*} = X_s$ with
horizontal differential given by the alternating sum of the simplicial face
maps, and $N(X)$ is the quotient of $C(X)$ by the subobject generated by
the image of the degeneracies.  The functor $N$ is an equivalence of
categories. Its inverse $\Gamma \colon \biChxes_k \to \sChxes_k$ can be
defined by
\[\Gamma(E)_n = \bigoplus_{[n] \twoheadrightarrow [p]} E_{p*}\]
with the simplicial structure maps described on \cite[p.
148]{Goerss_J_simplicial}.

The monoidal properties of these functors are as in the classical case
discussed in \cite[\S 2.3,\S 2.4]{Schwede_Shipley_monoidal}.  We have a
shuffle map $\nabla$ and the Alexander-Whitney map $AW$,
\[ \nabla \colon CX \tensor CY \to C(X \tensor Y) \quad \text{ and } \quad
AW \colon C(X \tensor Y) \to CX \tensor CY,  \]
$\nabla$ is a lax monoidal
transformation, $AW$ is a lax comonoidal transformation, and both maps
preserve the subcomplexes of degenerate simplicies and induce
\[ \nabla \colon NX \tensor NY \to N(X \tensor Y) \quad \text{ and } \quad
AW \colon N(X \tensor Y) \to NX \tensor NY. \]

For a simplicial dga $X$, the associated chain complex of the underlying
simplicial chain complex is a bidga with multiplication defined by
\[CX \tensor CX \xrightarrow{\nabla} C(X \tensor X) \to C(X).\] Similarly,
$NX$ is a bidga. For a bidga $B$, the Alexander-Whitney map induces the
structure of a simplicial dga on $\Gamma(B)$
\cite[(2.9)]{Schwede_Shipley_monoidal}. As explained in \cite[Remark
2.14]{Schwede_Shipley_monoidal}, the functors $\Gamma$ and $N$ do
{\bfseries not} induce an equivalence between $\sdga_k$ and $\bidga_k$, due
to the failure of the unit of the adjunction to be monoidal. However, the
adjunction counit isomorphism $N\Gamma \iso \id_{\biChxes_k}$ is monoidal on
the level of complexes \cite[Lemma 2.11]{Schwede_Shipley_monoidal}, which
is sufficient for our purposes.

\begin{lemma} \label{lem:dold_kan_prop}
\begin{enumerate}[(i)]
\item The functor $C$ maps $\cP$-equivalences of simplicial dgas to
  $E_2$-equivalences of bidgas.
\item For a $k$-bidga $B$, there is a canonical $E_2$-equivalence $C(\Gamma(B))
  \to B$ of bidgas which is natural in $B$. 
\end{enumerate}
\end{lemma}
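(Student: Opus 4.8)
The two statements are essentially a formal consequence of the monoidal properties of the Dold–Kan correspondence recorded just before the lemma, combined with the comparison between the associated chain complex $C$ and the normalized chain complex $N$.

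\medskip

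\textbf{Part (i).} The plan is to reduce to the well-known statement that $C$ and $N$ agree on homology. For a simplicial chain complex $X$, there is a natural quotient map $CX \to NX$ in $\biChxes_k$ which, by the classical Dold–Kan theorem (in the version of \cite[III Theorem 2.5]{Goerss_J_simplicial}), is a quasi-isomorphism in the horizontal direction in each vertical degree; hence it is even an $E_1$-equivalence, and in particular an $E_2$-equivalence of bicomplexes. Moreover this map is multiplicative for the bidga structures, since both $CX$ and $NX$ get their products from $\nabla$ and the degeneracy subcomplex is an ideal. So for a $\cP$-equivalence $f\colon X \to Y$ of simplicial dgas it suffices to show $N(f)$ is an $E_2$-equivalence of bidgas. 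By definition a $\cP$-equivalence induces a weak equivalence of simplicial groups $[S^n(k),X]_\bullet \to [S^n(k),Y]_\bullet$ for all $n$; unwinding, this says exactly that $f$ induces, in each vertical degree, a quasi-isomorphism of the simplicial $k$-modules obtained by taking homology, i.e. that $N(f_0)$ (the map on normalized complexes in the horizontal direction) is a quasi-isomorphism after passing to vertical homology. That is precisely the statement that $N(f)$, hence $C(f)$, is an $E_2$-equivalence.

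\medskip

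\textbf{Part (ii).} The plan is to write down the natural map as the composite
\[
C(\Gamma(B)) \xrightarrow{\ C(\mathrm{counit})\ } C(\Gamma(N\Gamma(B)^{-1}\,?\,)) \ \cdots
\]
— more precisely, to use the quotient map $C \to N$ together with the counit isomorphism $N\Gamma \cong \id_{\biChxes_k}$. Concretely, $C(\Gamma(B)) \to N(\Gamma(B)) \xrightarrow{\ \cong\ } B$, where the first map is the natural quotient discussed above and the second is the Dold–Kan counit isomorphism. The first map is an $E_2$-equivalence of bicomplexes by the argument in Part (i), and it is a bidga map because the quotient $CX \to NX$ is multiplicative. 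The second map is an isomorphism of complexes, and by \cite[Lemma 2.11]{Schwede_Shipley_monoidal} the counit isomorphism $N\Gamma \cong \id_{\biChxes_k}$ is monoidal on the level of complexes, so it is a map of bidgas. Composing, we get a natural $E_2$-equivalence $C(\Gamma(B)) \to B$ of bidgas.

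\medskip

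\textbf{Main obstacle.} The only genuinely delicate point is the multiplicativity of the comparison map $CX \to NX$: one must check that the degenerate subcomplex of $CX$ is a two-sided ideal with respect to the shuffle product, so that the product descends to $NX$ and the quotient map is a bidga homomorphism. This is classical (it is implicit in \cite[\S 2.3--2.4]{Schwede_Shipley_monoidal}), but it is the step that is easy to overlook; everything else is a formal manipulation of the monoidal Dold–Kan data already cited. The identification of $\cP$-equivalences of simplicial dgas with vertical-homology-wise quasi-isomorphisms in the horizontal direction is also something to state carefully, but it is immediate from the definition of $\cP$ as the set $\{S^n(k)\}$ and the fact that $[S^n(k),-]$ computes degree-$n$ homology.
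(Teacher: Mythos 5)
Your part (ii) is essentially the paper's argument (the map is $C(\Gamma(B))\to N(\Gamma(B))\xrightarrow{\iso} B$, multiplicative because the counit is monoidal and the shuffle product sends degenerate simplices to degenerate simplices), but your part (i) contains a genuine error. You claim that the quotient $CX\to NX$ is "a quasi-isomorphism in the horizontal direction in each vertical degree; hence it is even an $E_1$-equivalence." In this paper an $E_1$-equivalence means an isomorphism on $H^v_*$, the homology with respect to the \emph{vertical} (internal) differential $d_0$ --- and $CX\to NX$ is not one: as bicomplexes $CX\iso NX\oplus DX$, so $H^v(CX)\iso H^v(NX)\oplus H^v(DX)$ with $H^v(DX)$ nonzero in general. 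A levelwise horizontal quasi-isomorphism is a different condition, and it does not formally imply an $E_2$-equivalence, because the $E_2$-condition is defined by taking vertical homology \emph{first} and horizontal homology second. To push your route through $N$ you would additionally have to show $H^h_sH^v_t(DX)=0$, and here you run into the subtlety the paper explicitly flags: $N$ does not commute with vertical homology, so $H^v_t(NX)$ is not the normalized complex of the simplicial module $s\mapsto H_t(X_s)$. Your later sentence "$N(f_0)$ is a quasi-isomorphism after passing to vertical homology" glosses over exactly this point.

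The fix is to drop the detour through $N$ altogether, which is what the paper does. Since $C$ \emph{does} commute with vertical homology, $H^v_t(C(X))$ is precisely the unnormalized chain complex of the simplicial abelian group $[S^t(k),X]_\bullet$, whose horizontal homology computes its homotopy groups. Hence $H^h_sH^v_t(C(f))$ is an isomorphism for all $s,t$ if and only if $f_*\colon [S^t(k),X]_\bullet\to[S^t(k),Y]_\bullet$ is a weak equivalence of simplicial groups for all $t$, which is the definition of a $\cP$-equivalence; no comparison of $C$ with $N$ is needed. The same observation (applied together with the fact that $\Gamma$ commutes with vertical homology) gives the $E_2$-equivalence in (ii) by reducing to the classical Dold--Kan correspondence in each vertical degree. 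Your identification of the multiplicativity of $C\to N$ as the delicate point is reasonable, but the more serious delicacy is the interaction of $N$ with vertical homology, which your write-up does not address.
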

\begin{proof}
  It is enough to check (i) additively. Since $C$ commutes with taking
  vertical homology, the assertion follows from the definition of
  $\cP$-equivalences and $E_2$-equivalences.

  The map in (ii) is the one inducing the counit $N(\Gamma(B)) \to B$.  It
  is a map of bidgas since the counit is monoidal and the shuffle map is
  compatible with the degeneracies. Both $C$ and $\Gamma$ (but not $N$)
  commute with taking vertical homology, so the Dold-Kan correspondence for
  simplicial $k$-modules and $\Chxes^+_k$ applied in each vertical degree
  shows that $C(\Gamma(B)) \to B$ is an $E_2$-equivalence.
\end{proof}

\begin{proof}[Proof of Theorem \ref{thm:exis_uni_res}]
  We consider $\Gamma(A)$, the constant simplicial dga $A$, and its
  cofibrant replacement $\Gamma(A)^{\textrm{cof}}$.  Lemma
  \ref{lem:eno_Pproj} and Proposition \ref{prop:mstr_sdga} show that
  $\Gamma(A)^{\textrm{cof}}$ has $k$-projective vertical homology.  The
  unit is the composite $k \to C(\Gamma(k)) \to
  C(\Gamma(A)^{\textrm{cof}})$, which splits in vertical homology by the
  last part of Proposition \ref{prop:mstr_sdga}.  Lemma
  \ref{lem:dold_kan_prop} then shows that the composite
  \[ C(\Gamma(A)^{\textrm{cof}}) \to C(\Gamma(A)) \to A\] is a
  $k$-projective $E_1$-resolution. The second statement is the $A=A'$ case
  of the next lemma.
\end{proof}

\begin{lemma} \label{lem:reso_welldef} Given a map $A \to A'$ of $k$-dgas
  and $k$-projective $E_1$-resolutions $B_1 \to A_1$ and $B'_1 \to A'$,
  there is a commutative diagram
  \[ \xymatrix@-1pc{
   B_2 \ar[r] \ar[d] & B_3 \ar[r] \ar[d] & B'_3 \ar[d] & B'_2 \ar[d] \ar[l]  \\
   B_1 \ar[r] & A \ar[r] & A' & B'_1 \ar[l]}
   \]
   of bidgas in which all maps $B_i \to A$ and $B'_i \to A'$
   are $k$-projective $E_1$-resolutions. 
\end{lemma}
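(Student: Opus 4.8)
The plan is to build the diagram from the inside out, realizing the middle dgas $B_3$ and $B_3'$ as cofibrant replacements of $A \simeq A'$ in the $\cP$-model structure on $\sdga_k$ after passing through the Dold--Kan equivalence. First I would apply the simplicial Dold--Kan functor $\Gamma$ to the given map $A \to A'$, together with $\Gamma$ applied to the resolutions $B_1 \to A$ and $B_1' \to A'$; since $B_1$ and $B_1'$ need not be of the form $\Gamma$ of anything, I should instead only apply $\Gamma$ to the dgas $A$ and $A'$ and use that $C\Gamma$ is naturally $E_2$-equivalent to the identity on bidgas (Lemma \ref{lem:dold_kan_prop}(ii)) to reconnect with $B_1$ and $B_1'$. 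Concretely: factor the map of constant simplicial dgas $\Gamma(A) \to \Gamma(A')$ through a cofibrant replacement, i.e. choose a cofibrant replacement $\Gamma(A)^{\mathrm{cof}} \to \Gamma(A)$ and, using that $\Gamma(A')^{\mathrm{cof}} \to \Gamma(A')$ is a fibrant-target acyclic fibration (all objects are $\cP$-fibrant after Reedy fibrant replacement, cf. Proposition \ref{prop:mstr_sdga}), lift $\Gamma(A)^{\mathrm{cof}} \to \Gamma(A) \to \Gamma(A')$ along $\Gamma(A')^{\mathrm{cof}} \to \Gamma(A')$ to get a map $\Gamma(A)^{\mathrm{cof}} \to \Gamma(A')^{\mathrm{cof}}$ commuting with the replacements. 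Applying the functor $C$ and using Lemma \ref{lem:dold_kan_prop}(i) and the proof of Theorem \ref{thm:exis_uni_res}, I set $B_3 = C(\Gamma(A)^{\mathrm{cof}})$ and $B_3' = C(\Gamma(A')^{\mathrm{cof}})$; these are $k$-projective $E_1$-resolutions of $A$ and $A'$ respectively, and the induced map $B_3 \to B_3'$ sits over $A \to A'$.

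Next I would produce the outer squares connecting $B_1$ and $B_1'$ to $B_3$ and $B_3'$. The honest comparison happens on the simplicial side: $\Gamma(B_1) \to \Gamma(A)$ is a map of simplicial dgas, and since $\Gamma(A)^{\mathrm{cof}}$ is cofibrant and $\Gamma(A)$ is fibrant I can ask for a lift, but the arrow goes the wrong way — I want a map $B_2 \to B_1$ and $B_2 \to B_3$. The remedy is to take $B_2$ to be a cofibrant replacement of the pullback (or, more safely in a model category, a cofibrant replacement of $\Gamma(B_1)$ mapping compatibly to $\Gamma(A)^{\mathrm{cof}}$ via a lift): choose a cofibrant replacement $q\colon \Gamma(B_1)^{\mathrm{cof}} \to \Gamma(B_1)$, note $\Gamma(B_1)^{\mathrm{cof}} \to \Gamma(B_1) \to \Gamma(A)$ lifts through the acyclic fibration $\Gamma(A)^{\mathrm{cof}} \to \Gamma(A)$ to a map $\Gamma(B_1)^{\mathrm{cof}} \to \Gamma(A)^{\mathrm{cof}}$, and set $B_2 = C(\Gamma(B_1)^{\mathrm{cof}})$. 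Then $B_2 \to B_1$ is obtained by composing $C(q)\colon B_2 \to C(\Gamma(B_1))$ with the natural $E_2$-equivalence $C(\Gamma(B_1)) \to B_1$ of Lemma \ref{lem:dold_kan_prop}(ii), and $B_2 \to B_3$ is $C$ applied to the lift; both are $E_2$-equivalences of bidgas over $A$ by Lemma \ref{lem:dold_kan_prop}(i) and two-out-of-three, and $B_2 \to A$ is a $k$-projective $E_1$-resolution because it is $E_2$-equivalent to $B_1 \to A$ over $A$ — the vertical homology, its $k$-projectivity, and the splitting of the unit are all preserved. The mirror construction on the primed side yields $B_2' \to B_1'$ and $B_2' \to B_3'$.

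The main obstacle is bookkeeping rather than a conceptual gap: one must be careful that all the relevant maps of simplicial dgas are $\cP$-equivalences (so that $C$ sends them to $E_2$-equivalences), that the lifts exist (which requires the target of a lift to be $\cP$-fibrant, hence a preliminary Reedy-fibrant replacement of $\Gamma(A)$ and $\Gamma(A')$ may be needed, keeping track that this does not disturb the constant-simplicial-object comparison with $A$ and $A'$), and that the three defining conditions of a $k$-projective $E_1$-resolution (Definition \ref{def:dga_res}) are invariant under $E_2$-equivalence of bidgas over a fixed $A$ — the projectivity of $H^v_{st}$ and the splitting of $k \to H^v_{00}$ transport along an isomorphism of vertical homology, which is exactly what an $E_1$-equivalence (and a fortiori the $E_2$-equivalences produced here, which are in fact $E_1$-equivalences since $C$ commutes with vertical homology) provides. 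Assembling the two halves gives the asserted zig-zag $B_1 \leftarrow B_2 \to B_3 \to B_3' \leftarrow B_2' \to B_1'$ over $A \to A'$, which is the diagram in the statement.
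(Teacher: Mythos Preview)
Your overall strategy---pass to $\sdga_k$ via $\Gamma$, take cofibrant replacements, come back via $C$ and the natural map $C\Gamma \to \id$---is exactly what the paper does. The paper streamlines your manual lifting by simply applying a \emph{functorial} cofibrant replacement (available since the $\cP$-model structure on $\sdga_k$ is cofibrantly generated, Proposition~\ref{prop:mstr_sdga}) to the whole diagram $\Gamma(B_1) \to \Gamma(A) \to \Gamma(A') \leftarrow \Gamma(B'_1)$; this produces the top row and its maps to the middle row in one stroke, with commutativity for free. Your hand-built lifts achieve the same thing. Also, your worry about fibrancy is misplaced: the cofibrant replacement map $X^{\mathrm{cof}} \to X$ is an acyclic \emph{fibration}, and cofibrant objects lift against those with no hypothesis on the target, so no preliminary Reedy fibrant replacement is needed.

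There is, however, a genuine gap in your justification that the $B_i$ are $k$-projective $E_1$-resolutions. You assert that the comparison maps (e.g.\ $B_2 \to B_1$) are $E_1$-equivalences ``since $C$ commutes with vertical homology'', and then transport $k$-projectivity of $H^v$ and the unit splitting along them. But a $\cP$-equivalence $f$ in $\sdga_k$ only says that each $H^v_t(f)$ is a \emph{weak equivalence of simplicial abelian groups}, not a levelwise isomorphism; after applying $C$ this gives precisely an $E_2$-equivalence, not an $E_1$-equivalence (Lemma~\ref{lem:dold_kan_prop}(i)). Likewise $C\Gamma(B_1) \to B_1$ is only an $E_2$-equivalence. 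Since $E_2$-equivalences need not induce isomorphisms on $H^v$, you cannot transport $k$-projectivity of $H^v_{st}$ or the splitting of $k \to H^v_{00}$ this way. The correct argument---implicit in the paper's proof of Theorem~\ref{thm:exis_uni_res}---is direct: each $\Gamma(-)^{\mathrm{cof}}$ is cofibrant in $\sdga_k$, hence by Proposition~\ref{prop:mstr_sdga} is termwise $\cP$-projective cofibrant (so $H^v$ is $k$-projective by Lemma~\ref{lem:eno_Pproj}) and has unit splitting in homology. With this fix your argument goes through.
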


\begin{proof}
  We apply the cofibrant replacement functor in the $\cP$-model structure
  on $\sdga_k$ to $ \Gamma(B_1) \to \Gamma(A) \to \Gamma(A') \ot
  \Gamma(B'_1)$ and obtain
  \[ \xymatrix@-1pc{ C(\Gamma(B_1)^{\textrm{cof}}) \ar[r] \ar[d] &
    C(\Gamma(A)^{\textrm{cof}}) \ar[r] \ar[d] &
    C(\Gamma(A')^{\textrm{cof}})
    \ar[d] & C(\Gamma(B'_1)^{\textrm{cof}}) \ar[d] \ar[l] \\
    C(\Gamma(B_1)) \ar[r] \ar[d] & C(\Gamma(A)) \ar[r] \ar[d] &
    C(\Gamma(A'))
    \ar[d] & C(\Gamma(B'_1)) \ar[d] \ar[l] \\
    B_1 \ar[r]& A \ar[r] & A' & B'_1. \ar[l] } \] By Lemma
  \ref{lem:dold_kan_prop}, the upper and the lower row assemble to the
  desired diagram.
\end{proof}

\section{\texorpdfstring{Minimal \dainfalgs}{Minimal derived A-infinity algebras}} \label{sec:minimal} We start by giving an
alternative description of \dainfalgs and their maps.  Disregarding unit
conditions, an $A_{\infty}$-algebra structure on a $\mZ$-graded $k$-module
$M$ can be encoded by giving a differential on the reduced tensor algebra
of the suspension of $A$ \cite{Keller_introduction,Stasheff_associativity}.
Next we explain how the structure of a twisted chain complex on the reduced
tensor algebra (in bigraded $k$-modules) describes a \dainf{}-algebra.

Let $S$ be the shift $[0,1]$ of bigraded $k$-modules. There is a canonical
isomorphism $\sigma \colon S \to \id[0,1]$ of degree $(0,1)$. It induces an
isomorphism
\[
\Psi_j \colon \Hom_k (E^{\tensor j}, F) \to \Hom((SE)^{\tensor j}, SF)
\text{ with } \sigma_F (\Psi_j(f)) = (-1)^{\sclprod{\Psi_j(f)}{\sigma}} f \sigma^{\tensor j}_{E}.
\]
For a $k$-linear map $m_{ij}^E \colon E^{\tensor j} \to E[i, 2-(i+j)]$, we
define $\mtil_{ij}^{E,1} \colon SE^{\tensor j} \to SE[i,1-i]$ to be $\Psi_j
(m_{ij}^E)$.

Let $\Tbar \colon \bimodu{k} \to \bimodu{k}$ be the reduced tensor algebra
functor of bigraded $k$-modules, i.e., $\Tbar E = \bigoplus_{j \geq 1}
E^{\tensor j}$. Let $m_{ij}^E \colon E^{\tensor j} \to E[i, 2-(i+j)]$ be a
family of $k$-linear maps with $i \geq 0$ and $j \geq 1$.  For fixed $i$,
the $\mtil^{E,1}_{ij}$ assemble to a map $\mtil^{E,1}_{i} \colon \Tbar SE
\to SE[i,1-i]$. More generally, we define
\[
\mtil^{E,q}_{ij} := \sum_{\substack{r+1+t=q\\r+s+t=j}} \eins^{\tensor r}
\tensor \mtil^{E,1}_{is} \tensor \eins^{\tensor t} \colon SE^{\tensor j} \to
SE^{\tensor q} [i,1-i]
\]
for $q \leq j$ and write $\mtil^{E}_i \colon \Tbar SE \to \Tbar SE[i,1-i]$
for the map whose component mapping $SE^{\tensor j}$ to $SE^{\tensor
  q}[i,1-i]$ is $\mtil^{E,q}_{ij}$. The process of building the $\mtil^E_i$
from the $\mtil^{E,1}_i$ may also be described using the universal property
of $\Tbar SE$ as a (cocomplete) tensor coalgebra.

\begin{lemma} \label{lem:vari_obj_rel} Let $E$ be a bigraded $k$-module and
  let $m_{ij}^E \colon E^{\tensor j} \to E[i, 2-(i+j)]$ be a family of
  $k$-linear maps with $i \geq 0$ and $j\geq 1$ satisfying the unit
  condition \eqref{equ:obj_unitcondition}.  The following are equivalent:
\begin{enumerate}[(i)]
\item The $m_{ij}^E$ specify a \dainfalg structure on $E$. 
\item $\sum_{i+p=u} (-1)^{i} \mtil^{E}_{i} \mtil^{E}_{p} = 0$ for all $u
  \geq 0$.
\item $\sum_{i+p=u} (-1)^{i} \mtil^{E,1}_{i} \mtil^{E}_{p} = 0$ for all $u
  \geq 0$. 
\item $\Tbar SE$ is a twisted chain complex with $d_i = \mtil^E_i$. 
\end{enumerate}
\end{lemma}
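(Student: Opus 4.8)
The plan is to prove the chain of equivalences (i) $\Leftrightarrow$ (ii) $\Leftrightarrow$ (iii) $\Leftrightarrow$ (iv) in a convenient order. The equivalence (ii) $\Leftrightarrow$ (iv) is essentially the definition of a twisted chain complex: one only has to check that the $\widetilde m^E_i$ have the correct bidegree, namely that $\widetilde m^E_i \colon \Tbar SE \to \Tbar SE[i,1-i]$, which follows from the bidegree of $\widetilde m^{E,1}_{is}\colon SE^{\tensor s}\to SE[i,1-i]$ together with the fact that inserting identities $\eins^{\tensor r}$ and $\eins^{\tensor t}$ does not change the bidegree; then \eqref{equ:tch_obj_rel} for the maps $d_i=\widetilde m^E_i$ is exactly condition (ii).

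First I would treat (i) $\Leftrightarrow$ (iii). The point is that the relation $\widetilde m^{E,1}_i$ builds from, via the suspension isomorphisms $\Psi_j$, carries the relation \eqref{equ:obj_rel} for the $m^E_{ij}$ into a relation for the $\widetilde m^{E,1}_{ij}$. Concretely, $\sum_{i+p=u}(-1)^i \widetilde m^{E,1}_i \widetilde m^E_p$, restricted to the summand $SE^{\tensor v+?}$ and projected appropriately, unwinds as a sum over decompositions $j+q=v+1$, $r+1+t=j$ of terms $\widetilde m^{E,1}_{ij}(\eins^{\tensor r}\tensor \widetilde m^{E,1}_{pq}\tensor \eins^{\tensor t})$. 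Applying $\Psi$ termwise, using the compatibility $\sigma_F\Psi_j(f)=(-1)^{\sclprod{\Psi_j(f)}{\sigma}}f\sigma^{\tensor j}_E$ and the Koszul sign rule for $(f\tensor g)(f'\tensor g')$, each such term becomes (up to sign) $m^E_{ij}(\eins^{\tensor r}\tensor m^E_{pq}\tensor \eins^{\tensor t})$; the bookkeeping of the signs $\sigma$ picks up precisely the exponent $rq+t+pj$ appearing in \eqref{equ:obj_rel}, together with the $(-1)^i$ from the twisted-complex relation matching the $i$ in $j+q=v+1$, $i+p=u$. Thus (iii) for all $u$ is equivalent to \eqref{equ:obj_rel} for all $u\geq 0$, $v\geq 1$, and since the unit condition is assumed throughout, this is (i).

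It remains to see (ii) $\Leftrightarrow$ (iii). One implication is trivial: composing $\widetilde m^E_i \widetilde m^E_p$ and then projecting to the single-tensor-factor component $SE$ recovers $\widetilde m^{E,1}_i \widetilde m^E_p$ (since $\widetilde m^{E,1}_i$ is by definition the component of $\widetilde m^E_i$ landing in $SE$), so (ii) $\Rightarrow$ (iii). For the converse, I would use the coalgebra description: $\Tbar SE$ is the cofree (cocomplete) tensor coalgebra, $\widetilde m^E_i$ is the coderivation determined by its corestriction $\widetilde m^{E,1}_i$ to the cogenerators, and a composite of a coderivation with a coalgebra map is again determined by its corestriction. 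Hence $\sum (-1)^i \widetilde m^E_i \widetilde m^E_p$ is the coderivation with corestriction $\sum(-1)^i \widetilde m^{E,1}_i \widetilde m^E_p$; if the latter vanishes, so does the former. Alternatively, and more elementarily, one checks directly that the $SE^{\tensor q}$-component of $\widetilde m^E_i \widetilde m^E_p$ is obtained from the $SE$-component by inserting identities, so that vanishing of all single-factor components forces vanishing of all components.

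The main obstacle will be the sign verification in the step (i) $\Leftrightarrow$ (iii): one must carefully propagate the Koszul signs through the suspension isomorphisms $\Psi_j$ and the tensor-product rule, keeping track of the internal bidegrees of the $\widetilde m^{E,1}_{ij}$ (which are $(i,1-i)$) against those of the $m^E_{ij}$ (which are $(i,2-(i+j)\,)$), and check that the discrepancy is absorbed exactly into the exponent $rq+t+pj$ of \eqref{equ:obj_rel}. This is the computation alluded to in Remark~\ref{rem:signs_explanation} as motivating the choice of signs, so I would organize it as a standalone sign lemma and otherwise keep the combinatorial parts brief.
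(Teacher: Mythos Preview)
Your plan is correct and follows the same approach as the paper: the core content is the sign computation showing (i) $\Leftrightarrow$ (iii), while (ii) $\Leftrightarrow$ (iv) is definitional and (ii) $\Leftrightarrow$ (iii) follows from the coderivation property you describe. The paper's proof is in fact terser than yours---it records only the sign identity
\[
(-1)^i \mtil^{E,1}_{ij}(\eins^{\tensor r}\tensor \mtil^{E,1}_{pq}\tensor \eins^{\tensor t})
=(-1)^{rq+t+pj}\,\sigma^{-1} m_{ij}(\eins^{\tensor r}\tensor m_{pq}\tensor \eins^{\tensor t})\,\sigma^{\tensor r+q+t}
\]
and leaves the remaining equivalences implicit.
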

\begin{proof}
  The signs in \ref{equ:obj_rel} arise from interchanging $\sigma$ with the
  $\mtil^{E,1}_{pq}$:
\[\begin{split}
  (-1)^i \mtil^1_{ij}(\eins^{\tensor r} \tensor \mtil^1_{pq} \tensor
  \eins^{\tensor
    t})
  =& (-1)^{p+t(1+p)} \sigma^{-1} m_{ij} (\sigma^{\tensor r} \tensor m_{pq}
  \sigma^{\tensor q} \tensor \sigma^{\tensor t})\\ 
  =& (-1)^{rq+t+pj}
  \sigma^{-1} m_{ij} (\eins^{\tensor r} \tensor m_{pq} \tensor \eins^{\tensor t})
  \sigma^{\tensor r+q+t}
\end{split}\]  
\end{proof}
In terms of the $\mtil^{E,1}_{ij}$, the unit condition
\eqref{equ:obj_unitcondition} may be rephrased as
\begin{equation}\label{equ:obj_mod_unitcondition}
\begin{split}
&\mtil_{01}^{E,1}(\sigma^{-1}_E \eta_E) = 0,\;  
- m_{02}^{E,1}(\sigma^{-1}_E \eta_E \tensor \eins) = \eins = 
\mtil_{02}^{E,1} (\eins \tensor \sigma^{-1}_E\eta_E), \; \text{ and} \\  
&\mtil_{ij}^{E,1}(\eins^{\tensor r-1} \tensor \sigma^{-1}_E \eta_E
\tensor \eins^{\tensor j-r}) = 0
\text{ if } i+j \geq 3 \text{ with } 1 \leq r \leq j
\end{split}
\end{equation}

A similar description applies to maps.  Given bigraded $k$-modules $E$ and
$F$ and $k$-linear maps $f_{pq} \colon E^{\tensor q} \to F[p, 1-(p+q)]$
with $p \geq 0, q \geq 1$, we define the map $\ftil^1_{pq}$ to be
$\Psi_q(f) \colon SE^{\tensor q} \to SF[p,-p]$. The $\ftil^1_{pq}$ assemble
to $\ftil^1_p \colon \Tbar SE \to SF[p,-p]$. We define
\[  \ftil^{j}_{pq}  := \sum_{\substack{p_1 + \dots + p_j = p \\ q_1 + \dots + q_j =q }}
\ftil_{p_1 q_1}^1 \tensor \dots \tensor \ftil_{p_j q_j}^1 \colon SE^{\tensor q} \to SE^{\tensor j}[p,-p]\]
and write $\ftil_p \colon \Tbar SE \to \Tbar
SF[p,-p], p \geq 0,$ for the morphism mapping $SE^{\tensor q}$ to
$SE^{\tensor j}[p,-p]$ by $\ftil^j_{pq}$.

\begin{lemma} \label{lem:vari_map_rel} Let $E$ and $F$ be \dainfalgs and
  let $f_{pq} \colon E^{\tensor q} \to F[p, 1-(p+q)]$ a family of
  $k$-linear maps with $p \geq 0$ and $q \geq 1$ satisfying the unit
  condition \eqref{equ:mor_unitcondition}. The following are equivalent:
\begin{enumerate}[(i)]
\item The $f_{pq}$ form a \dainfalg map from $E$ to $F$. 
\item $\sum_{i+p=u} (-1)^{i} \ftil_i \mtil^E_p = \sum_{i+p=u} \mtil^F_i
  \ftil_p$ for all $u \geq 0$.
\item $\sum_{i+p=u} (-1)^{i} \ftil_i^1 \mtil^E_p = \sum_{i+p=u}
  \mtil^{F,1}_i \ftil_p$ for all $u \geq 0$.
\item The $\ftil_i$ form a map of twisted chain complexes.
\end{enumerate}
\end{lemma}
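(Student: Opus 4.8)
The plan is to derive the equivalence (i) $\Leftrightarrow$ (iii) from a suspension-interchange computation parallel to the proof of Lemma~\ref{lem:vari_obj_rel}, and then to obtain (ii) and (iv) from (iii) by soft arguments about twisted chain complexes and tensor coalgebras. The only laborious part will be checking that the rewriting on the right-hand side of~(iii) reproduces the sign~\eqref{equ:map_sign}; the rest is formal.

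First I would dispose of the two soft equivalences. For (ii) $\Leftrightarrow$ (iv): by part~(iv) of Lemma~\ref{lem:vari_obj_rel}, applied to $E$ and to $F$, the modules $\Tbar SE$ and $\Tbar SF$ carry twisted chain complex structures with differentials $\mtil^E_i$ and $\mtil^F_i$, respectively; the relation~\ref{equ:tch_map_rel} defining a map of twisted chain complexes $(\ftil_i)_{i \geq 0} \colon \Tbar SE \to \Tbar SF$ is then verbatim the identity in~(ii). For (ii) $\Leftrightarrow$ (iii): one corestricts along the projection $\pi \colon \Tbar SF \to SF$ onto the first tensor power. Since $\pi \ftil_i = \ftil^1_i$ and $\pi \mtil^F_i = \mtil^{F,1}_i$, composing the identity in~(ii) with $\pi$ gives the identity in~(iii). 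For the converse I would use, exactly as in the classical identification of \ainf{}-morphisms with maps of tensor coalgebras, that the $\mtil^E_p$ and $\mtil^F_i$ are coderivations and that the $\ftil_p$ assemble to a morphism of the cocomplete tensor coalgebras $\Tbar SE$ and $\Tbar SF$; this is precisely the content of the formulas that build $\mtil^E_p$, $\mtil^F_i$ and $\ftil_p$ out of $\mtil^{E,1}_p$, $\mtil^{F,1}_i$ and $\ftil^1_p$. Then, for each $u$, the two maps $\sum_{i+p=u}(-1)^i \ftil_i \mtil^E_p$ and $\sum_{i+p=u}\mtil^F_i \ftil_p$ are coderivations relative to that coalgebra morphism, hence determined by their corestriction to $SF$; so (iii) forces~(ii).

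Next I would prove (i) $\Leftrightarrow$ (iii). Fix $u \geq 0$ and $v \geq 1$ and restrict the identity in~(iii) to the summand $SE^{\tensor v} \subseteq \Tbar SE$; both sides then land in $SF[u,1-u]$. Expanding $\mtil^E_p$ and $\ftil_p$ into their components, the left-hand side becomes
\[
\sum_{\substack{i+p=u,\ r+q+t=v \\ q \geq 1,\ i,p,r,t \geq 0}}
(-1)^{i}\, \ftil^1_{i,\, r+1+t}\bigl(\eins^{\tensor r} \tensor \mtil^{E,1}_{pq} \tensor \eins^{\tensor t}\bigr)
\]
and the right-hand side becomes
\[
\sum_{\substack{i+p_1+\dots+p_j=u,\ q_1+\dots+q_j=v \\ 1 \leq j \leq v,\ p_s \geq 0,\ q_s \geq 1}}
\mtil^{F,1}_{ij}\bigl(\ftil^1_{p_1 q_1} \tensor \dots \tensor \ftil^1_{p_j q_j}\bigr).
\]
Now I would use the isomorphisms $\Psi$ to move every suspension $\sigma$ to the outside. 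On the left-hand side this is the interchange already carried out in the proof of Lemma~\ref{lem:vari_obj_rel}, now with $\ftil^1_{ij}$ (of output degree $[i,-i]$) in the role of $\mtil^{E,1}_{ij}$: the generic summand becomes $\sigma^{-1} f_{ij}(\eins^{\tensor r} \tensor m^E_{pq} \tensor \eins^{\tensor t}) \sigma^{\tensor v}$, with $j = r+1+t$, times the sign $(-1)^{rq+t+pj}$ appearing in~\ref{equ:mor_rel}. On the right-hand side the generic summand becomes $\sigma^{-1} m^F_{ij}(f_{p_1 q_1} \tensor \dots \tensor f_{p_j q_j}) \sigma^{\tensor v}$ times the sign $(-1)^{\sigma}$, with $\sigma$ as in~\eqref{equ:map_sign}. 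Cancelling the common outer $\sigma^{-1}(-)\sigma^{\tensor v}$, the restriction of~(iii) to $SE^{\tensor v}$ is exactly the relation~\ref{equ:mor_rel}; letting $u \geq 0$ and $v \geq 1$ range gives (i) $\Leftrightarrow$ (iii).

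The hard part is the last claim about the right-hand side: that pushing the suspensions out of $\mtil^{F,1}_{ij}(\ftil^1_{p_1 q_1} \tensor \dots \tensor \ftil^1_{p_j q_j})$ really produces the exponent~$\sigma$ of~\eqref{equ:map_sign}. Each $\ftil^1_{p_w q_w}$ has bidegree $(p_w,-p_w)$ and carries one $\sigma^{-1}$ on its output and $q_w$ copies of $\sigma$ on its inputs; commuting all of these past one another and past the remaining inputs of $\mtil^{F,1}_{ij}$ by the Koszul rule, and collecting the resulting products of bidegrees, is what produces the four groups of summands $j p_w$, $w(q_{j-w} - p_w)$, $q_{j-w}\bigl(\sum_{s=j-w+1}^{j} p_s + q_s\bigr)$ and the global $u$ in~\eqref{equ:map_sign}. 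This is mechanical but lengthy, and it is in fact what dictated the sign convention adopted in Definition~\ref{def:dainf_maps}; no idea beyond the single $\sigma$-interchange of Lemma~\ref{lem:vari_obj_rel} enters.
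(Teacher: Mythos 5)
The paper states this lemma without proof, treating it as the evident analogue of Lemma \ref{lem:vari_obj_rel}, and your argument --- the $\sigma$-interchange identifying the restriction of (iii) to $SE^{\tensor v}$ with relation \ref{equ:mor_rel}, plus the corestriction/coderivation-along-a-coalgebra-morphism argument for (ii)$\Leftrightarrow$(iii) and the observation that (ii) is verbatim \ref{equ:tch_map_rel} for (ii)$\Leftrightarrow$(iv) --- is exactly the intended reconstruction and is correct. The only caveat is the one you already flag: since $\ftil^1_{ij}$ has bidegree $(i,-i)$ rather than $(i,1-i)$, the Koszul bookkeeping is not literally that of Lemma \ref{lem:vari_obj_rel} but differs by factors that must be checked to match \eqref{equ:map_sign}; this is the deferred mechanical step and does not affect the structure of the proof.
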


\begin{remark} \label{rem:signs_explanation} One advantage of using the
  $\mtil^{E}_{ij}$ and the $\ftil^1_{ij}$ is that their bidegrees
  $|\mtil^{E}_{ij}| = (i,1-i)$ and $|\ftil^1_{ij}| = (i,-i)$ do not depend
  on $j$. As one can already see by comparing the last lemma with
  Definition \ref{def:dainf_maps}, this reduces the complexity of signs
  considerably.  Because of this, we will frequently switch to the
  $\mtil_{ij}$ for explicit calculations.

  However, there is still a choice behind our sign convention. For example,
  one may omit the sign in the definition of twisted chain complexes. This
  would change the sign in the definition of \dainfalgs and, in turn, the
  sign in the Leibniz rule for bidgas. This would be in conflict with the
  signs resulting from the Dold-Kan correspondence relating $\biChxes_k$
  and $\sdga_k$. In other words, our sign convention is chosen to ensure
  that the chain complex associated to a simplicial dga is a
  \dainf{}-algebra.
\end{remark}

At this point we can also make up the promised definition of the
composition of \dainfalg maps.

\begin{definition} \label{def:map_comp} Given two maps $g \colon D \to E$
  and $g \colon E \to F$, the component $(\fgtil)_{u}$ of $fg$ is
  $\sum_{i+p=u} \ftil_i \gtil_p$.
\end{definition}

An $A_{\infty}$-algebra is minimal if $m_1^A = 0$. This motivates
\begin{definition} \label{def:minimality}
A \dainfalg $E$ is {\em minimal} if $m_{01}^E = 0$. 
\end{definition}
The device to produce minimal \dainfalgs is the following proposition. It
promotes both the statement and the proof of \cite[Theorem
1]{Kadeishvili_homology} about $A_{\infty}$-algebras to the
context of \dainf{}-algebras.

\begin{proposition} \label{prop:exis_str} Let $B$ be a bidga, let $E$ be
  its homology with respect to $m_{01}^B$, and let $d_1$ and $\mu$ be the
  differential and the multiplication induced on $E$. If $E$ is
  $k$-projective in every bidegree and the unit map $\eta_E \colon k \to E$
  induced by $\eta_B \colon k \to B$ splits as a $k$-module map, then there
  exists
\begin{enumerate}[(i)] 
\item a minimal \dainf{}-structure on $E$ with $m^{E}_{11} = d_1$ and
  $m^{E}_{02} = \mu$ and
\item an $E_2$-equivalence of \dainfalgs $f \colon E \to B$.
\end{enumerate}
\end{proposition}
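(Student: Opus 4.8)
The plan is to mimic Kadeishvili's obstruction-theoretic argument, but carried out one ``total degree'' $n = i+j$ at a time instead of one arity at a time, using the homological-perturbation structure available inside a bidga. Since $E$ is $k$-projective in each bidegree and $\eta_E$ splits, I first choose, once and for all, $k$-linear splittings: a cycle-selection map $s \colon E \to \ker(m_{01}^B) \hookrightarrow B$ with $s$ a $d_0$-cycle-valued section of the projection $\ker(m_{01}^B) \twoheadrightarrow E$ (this exists bidegreewise because each $E_{st}$ is $k$-projective), together with a contracting homotopy $h \colon B \to B[0,-1]$ and a projection $q\colon B \to E$ realizing $B \simeq E$ as a deformation retract of bicomplexes in the vertical direction, i.e. $q\,s = \id_E$, $s\,q - \id_B = m_{01}^B h + h\,m_{01}^B$, normalized so that $hh=0$, $hs=0$, $qh=0$. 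The unit splitting is used to arrange that $s$, $q$, $h$ are compatible with the units, so the unit conditions \eqref{equ:obj_unitcondition} and \eqref{equ:mor_unitcondition} will hold automatically below.

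Next I set $f_{01} := s$ (so $\widetilde f^{\,1}_{01}$ is the suspension of the cycle selection), $m^E_{11} := d_1$, $m^E_{02} := \mu$, and $f_{11}$, $f_{02}$ the maps forced by the relations $\ref{equ:mor_rel}_{uv}$ for $u+v \le 3$ — explicitly $f_{11} = h\,d_1^B s - $ (correction terms) and $f_{02} = h\,m_{02}^B(s\otimes s)$, the standard leading perturbation terms. The core of the argument is then an induction on $n \ge 3$: assume $m^E_{ij}$ for $i+j < n$ and $f_{ij}$ for $i+j < n$ have been constructed so that all relations $\ref{equ:obj_rel}_{uv}$ and $\ref{equ:mor_rel}_{uv}$ with the relevant total degree $<n$ hold. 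For each pair $(i,j)$ with $i+j = n$, the relation $\ref{equ:mor_rel}_{uv}$ (with $u=i$, $v=j$) has the shape
\[
\sum_{p+q=i+1,\ \ast} \pm\, f_{pq}(\ldots) - \sum \pm\, m^F_{pq}(f\otimes\cdots\otimes f) \;=\; (\text{known terms}) - m_{01}^B f_{ij} \;+\; f_{ij}\,(\text{stuff not involving }m^E_{ij}),
\]
and solving it amounts to: define the candidate $m^E_{ij}$ as $q$ applied to the known ``obstruction'' chain $\mathcal{O}_{ij} \in B^{\otimes j}$, then set $f_{ij} := h\,\mathcal{O}_{ij}$ (again up to correction terms built from lower data and $h$). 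One checks that $\mathcal{O}_{ij}$ is a $m_{01}^B$-cycle — this is where the inductive hypothesis on the lower relations is used, exactly as in Kadeishvili's original proof — so that $q\,\mathcal{O}_{ij}$ is a well-defined map into $E$ and $s\,q\,\mathcal{O}_{ij} - \mathcal{O}_{ij} = m_{01}^B h\,\mathcal{O}_{ij}$, which is precisely the identity $\ref{equ:mor_rel}_{ij}$ after bookkeeping. The reformulations in Lemmas \ref{lem:vari_obj_rel} and \ref{lem:vari_map_rel} — rewriting everything on $\Tbar SE$ so that $|\widetilde m^E_{ij}| = (i,1-i)$ and $|\widetilde f^{\,1}_{ij}| = (i,-i)$ independently of $j$ — are what make the signs tractable: the whole induction can be phrased as ``extend the twisted chain complex structure on $\Tbar SE$ and the twisted chain map $\widetilde f$ one $d_i$-or-$f_i$ at a time,'' reducing it to a homological-perturbation statement for twisted complexes.

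Finally, $m^E_{01} = 0$ by construction, so $E$ is minimal; and $\widetilde f^{\,1}_{01}$ induces the identity on $H^v_*$ (it is the suspended cycle-selection map), hence $f$ is even an $E_1$-equivalence, in particular an $E_2$-equivalence in the sense of Definition \ref{def:E2equiv}. The unit conditions follow from the compatibility of $s,q,h$ with the units chosen at the outset, together with the inductive formulas $m^E_{ij} = q\,\mathcal O_{ij}$, $f_{ij} = h\,\mathcal O_{ij}$ and the fact that $q$ and $h$ kill the relevant unit-summands. The main obstacle — and the only place real work is needed beyond bookkeeping — is verifying the cocycle property $m_{01}^B\,\mathcal{O}_{ij} = 0$ of each obstruction chain: one must expand $m_{01}^B$ applied to the defining expression, use the Leibniz rules for $m_{02}^B$ and the commutation $m_{01}^B m_{11}^B = m_{11}^B m_{01}^B$, and recognize the result as a signed sum of lower relations $\ref{equ:obj_rel}$ and $\ref{equ:mor_rel}$, all of which vanish by induction. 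Packaging this uniformly over all $(i,j)$ with $i+j=n$ is cleanest if one does the bookkeeping on $\Tbar SE$, where it becomes the single identity $m_{01}^B \circ (\text{total obstruction in degree } n) = \pm(\text{sum of degree-}{<}n\text{ relations})$.
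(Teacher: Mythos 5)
Your overall strategy---induction on the total degree $i+j$, an obstruction chain whose $m_{01}^B$-cocycle property is checked against the already-established lower relations, and the reformulation on $\Tbar SE$ to make the signs uniform---is exactly the one the paper uses. The gap is in your very first step: you posit a full vertical deformation retract $(s,q,h)$ of $B$ onto $E$, with $qs=\id$, $sq-\id = m_{01}^B h + h\, m_{01}^B$ and side conditions, justified only by the bidegreewise $k$-projectivity of $E$ and the splitting of the unit. That justification is insufficient. Projectivity of the vertical homology gives a cycle-selection section of $\kernel m_{01}^B \twoheadrightarrow E$, but it gives neither the retraction $q$ (which requires $E$ to be a direct summand of $B/\image m_{01}^B$, whose other factor $B/\kernel m_{01}^B$ need not be projective) nor, more seriously, the global homotopy $h$. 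Concretely, over $k=\mZ/4$ let $B=k\oplus N$ be the bidga concentrated in horizontal degree $0$ with $N$ the square-zero ideal given by the unbounded vertical complex $\cdots \xrightarrow{\cdot 2} \mZ/4 \xrightarrow{\cdot 2} \mZ/4 \xrightarrow{\cdot 2} \cdots$. Then $E=H^v(B)=k$ is projective and the unit splits, so the hypotheses hold, but no $h$ with $sq-\id = m_{01}^B h + h\,m_{01}^B$ exists: the right-hand side always has image in $2B$, while $(sq-\id)|_N = -\id_N$ does not. So the conclusion of the proposition is true here while the data your construction rests on does not exist.

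The repair---and the paper's actual argument---is to never use $q$ or $h$ as globally defined maps on $B$. The obstruction $\ztil_{ln}$ is a map out of the $k$-projective module $(SE)^{\tensor n}$ landing in $\kernel \mtil^{B}_{01}$; one defines $\mtil^{E,1}_{ln}$ by composing with the quotient map $\kernel \mtil^{B}_{01}\to SE$ (no extension of $q$ over $B$ is needed), and one obtains $\ftil^1_{ln}$ not as $h$ applied to anything but as a lift of the boundary-valued difference $\ftil^1_{01}\mtil^{E,1}_{ln}-\ztil_{ln}$ through the surjection $B\to \image \mtil^{B}_{01}$, which exists precisely because the source $(SE)^{\tensor n}$ is projective. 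Every use you make of $h$ is of this form, so your induction goes through once rephrased this way. Two smaller points: the unit conditions are not ``automatic''---one first adjusts $f_{01}$ so that $f_{01}\eta_E=\eta_B$ and then uses the splitting of $\eta_E$ to modify each $f_{ln}$ to vanish on the unit summands; and besides the cocycle property of the obstruction you must separately verify that the newly defined $m^E_{ij}$ satisfy the object relations \ref{equ:obj_rel} in total degree $l+n+1$ (the paper's Lemma \ref{lem:zln_works}), which is a computation distinct from the map relations \ref{equ:mor_rel} and is not covered by your ``only place real work is needed'' remark.
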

To construct the structure inductively, we define
\[\label{equ:zln} \eqtagwithsubscr{ln}
\ztil_{ln} = \mtil^{B,1}_{02} \ftil_{ln}^2 + \mtil^{B,1}_{11}
\ftil^1_{l-1,n} - \sum_{\substack{i+p=l\\ i+j \geq 2; \, j<n}}
(-1)^i\ftil^1_{ij} \mtil^{E,j}_{pn}.
\]
The identity \ref{equ:mor_rel} can be rewritten as $\mtil_{01}^{B,1}
\ftil_{uv}^1 = \ftil_{01}^1 \mtil_{uv}^{E,1} - \ztil_{uv} $ by Lemma
\ref{lem:vari_map_rel}.

\begin{proof}[Proof of Proposition \ref{prop:exis_str}]
  Since $E$ is $k$-projective, we can choose a $k$-linear cycle selection
  homomorphism $f_{01} \colon E \to B$ in the diagram
  \[\xymatrix{
    E \ar[d]_{=} \ar@{-->}[dr] \ar@{-->}[drr]^{f_{01}}\\
    H_*(B, m_{01}^B) & \kernel m_{01}^B \ar@{->>}[l] \ar@{->}[r] & B}
  \]
  We may adjust $f_{01}$ to ensure $f_{01}\eta_E = \eta_B$. Setting
  $m_{01}^{E} = 0$, we observe that \ref{equ:mor_rel} holds for $u+v = 1$.

  For the definition of $\mtil^{E,1}_{ln}$ and $\ftil^{1}_{ln}$ we assume
  that we have constructed the $\mtil^{E,1}_{ij}$ and $\ftil^{1}_{ij}$ for
  $i+j \leq l+n$ such that \ref{equ:obj_rel} holds for $u+v \leq l+n$,
  \ref{equ:mor_rel} holds for $u+v < l+n$, and the two unit conditions
  \eqref{equ:obj_unitcondition} and \eqref{equ:mor_unitcondition} are
  satisfied. Then the $\ztil_{ln}$ of \ref{equ:zln} is defined.

  By Lemma \ref{lem:zln_cocycle}, $\ztil_{ln}$ has values in $\kernel
  \mtil_{01}^B$. We define $\mtil_{ln}^{E,1}$ to be the composition of
  $\ztil_{ln}$ with the quotient map $\kernel \mtil_{01}^{B,1} \to SE$.  By
  Lemma \ref{lem:zln_works}, the $\mtil_{ln}^{E,1}$ together with the
  $\mtil^{E,1}_{ij}$ for $i+j < l+n$ satisfy \ref{equ:obj_rel} for $u+v
  \leq l+n+1$. Checking \eqref{equ:obj_unitcondition} for $m_{ln}^{E}$ is
  easy.

  We need to define the $\ftil_{ln}^1$. In each bidegree, $SE^{\tensor n}$
  is $k$-projective as a tensor product of $k$-projective modules. Since
  $\ftil_{01}^1 \mtil_{ln}^{E}$ and $\ztil_{ln}$ both have values in
  $\kernel \mtil_{01}^B$ and coincide when composed with the quotient map
  to $SE$, their difference lies in the image of $\mtil_{01}^B$, and we can
  find lifts $\ftil_{ln}^1$ in
  \[\xymatrix{ (S E)^{\tensor n} \ar[d]_{\ftil_{01}^1 m_{ln}^{E,1}-
      \ztil_{ln}}
    \ar@{-->}[rd]^{\ftil^1_{ln}} \\
    (\image \mtil_{01}^B)[u, 2-(u+v)] & B[u, 1-(u+v)]
    \ar[l]_(.4){\mtil_{01}^{B,1}} }\]
  The $f_{ln}$ satisfy \ref{equ:mor_rel} for $u+v=l+n$ by construction. By
  assumption, the inclusions of the submodules on which the unit condition
  \eqref{equ:mor_unitcondition} requires $f_{ln}$ to be zero splits. Hence
  we may assume $f_{ln}$ to be zero there.
\end{proof}

The input for the last proposition will be a $k$-projective
$E_1$-resolution arising form Theorem \ref{thm:exis_uni_res}. Since these
are only well defined up to $E_2$-equivalence, we need to check that such
an $E_2$-equivalence of bidgas induces a map of the associated minimal
\dainf{}-algebras. This issue does not come up when constructing minimal
models in the world of $A_{\infty}$-algebras, since the underlying graded
algebra of a minimal \ainf{}-model is unique up to isomorphism \cite[\S
8.7]{Keller_introduction}.

To formulate the next proposition, we need to introduce the notion of a
homotopy of \dainfalg maps. We only define it in a special case.

\begin{definition}
  Let $f$ and $g$ be \dainfalg maps from a minimal \dainfalg $E$ to a bidga
  $C$. A homotopy from $g$ to $f$ is a family of maps $\htil_i \colon \Tbar
  SE \to SC[i,-1-i]$ for $i \geq 0$ satisfying
  \[ \label{equ:hty} \eqtagwithsubscr{uv} \begin{split} \gtil_{uv}
    -\ftil_{uv} =& \sum_{i+p=u;\,j+q=v} \left( (-1)^u \mtil_{02}^{C,1} (
      \gtil_{ij}^1 \tensor \htil^1_{pq})+ (-1)^i \mtil_{02}^{C,1} (
      \htil^1_{ij} \tensor
      \ftil_{pq}^1) \right) \\
    & + (-1)^u \mtil_{01}^{C,1} \htil^1_{uv} +(-1)^u \mtil_{11}^{C,1}
    \htil^1_{u-1,v} + \sum_{i+p=u;\, 1 \leq j \leq n} \htil^1_{ij}
    \mtil^{E,j}_{pv}
  \end{split} \] for $u \geq 0$ and $v \geq 1$ and the unit condition
  \begin{equation} \label{equ:hty_unitcondition}
    \htil^1_{ij}(\eins^{\tensor r-1} \tensor \sigma^{-1}_E \eta_E \tensor
    \eins^{\tensor j-r}) = 0 \text{ with } 1 \leq r \leq j \text{ for all }
    i,j.
  \end{equation} 
\end{definition}

\begin{proposition} \label{prop:funct_str} Let $\alpha \colon B \to C$ be a
  map of bidgas with $k$-projective vertical homology algebras $E$ and $F$.
  Let $E$ and $F$ be equipped with \dainfalg structures and \dainfalg maps
  $f \colon E \to B$ and $g \colon F \to C$ arising from Proposition
  \ref{prop:exis_str}.  Then there exist \dainfalg map $\beta \colon E \to
  F$ with $\beta_{01} = H_{*}^v(g)$ and a homotopy from $g \beta$ to
  $\alpha f$.
\end{proposition}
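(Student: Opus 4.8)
The plan is to construct the pair $(\beta, \htil)$ simultaneously by induction on $u+v$, mimicking the proof of Proposition \ref{prop:exis_str} but now carrying along a homotopy term. First I would set $\beta_{01} = H_*^v(g) \colon E \to F$, which is a $k$-module map by functoriality of vertical homology. The defining relation for a homotopy, equation \ref{equ:hty}, when read in the lowest degrees, forces us to choose $\htil^1_{00}$ (in degrees where it is nonzero) so that $\gtil_{01}^1 \beta_{01} - \alpha_1 f_{01}$ — the difference of the two cycle-selection-type maps $E \to C$ composed appropriately — becomes a boundary; since $E$ is $k$-projective and both sides induce the same map on $H_*^v$ (namely $H_*^v(g\beta) = H_*^v(g)H_*^v(\beta) = H_*^v(g) = H_*^v(\alpha)H_*^v(f) = H_*^v(\alpha f)$, using $\beta_{01}=H_*^v(g)$ and that $f,g$ induce the identity on vertical homology by Proposition \ref{prop:exis_str}(ii)), such a nullhomotopy $\htil_0$ exists. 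This handles the base case $u+v \le 1$.

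For the inductive step, suppose $\beta_{ij}$ and $\htil_i^1$ have been constructed for $i+j < l+n$ so that \ref{equ:mor_rel} holds for $\beta$ in those degrees, \ref{equ:hty} holds for $u+v < l+n$, and both unit conditions \ref{equ:mor_unitcondition} and \ref{equ:hty_unitcondition} hold. I would isolate from \ref{equ:mor_rel}$_{ln}$ (applied to $\beta$) the term $\mtil_{01}^{F,1}\betatil_{ln}^1 = 0$ — wait, $F$ is minimal so $\mtil_{01}^F = 0$, so that term vanishes; instead the relevant equation is $\mtil_{01}^{F,1}$ does not appear. Rather, I would follow the pattern of \ref{equ:zln}: define an auxiliary map $\ztil^\beta_{ln} \colon SE^{\tensor n} \to SF[l, 2-(l+n)]$ collecting all the already-defined terms of \ref{equ:mor_rel}$_{ln}$ for $\beta$, show (by an argument parallel to Lemma \ref{lem:zln_cocycle}, using the inductive relations) that it takes values in $\kernel \mtil_{01}^{F,1}$, and hence — since $F$ is minimal, $\kernel \mtil_{01}^{F,1} = SF$ — it simply defines a candidate for $\mtil_{01}^{F,1}\betatil^1_{ln}$; but again $F$ minimal means we cannot solve for $\betatil$ by quotienting. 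The correct move is: $\ztil^\beta_{ln}$ is a cycle for the vertical differential on $F$ (which is zero since $F$ is its own vertical homology equipped with $d_1 = m_{11}^F$), so it automatically lands in $SF$, and we may set $\betatil^1_{ln}$ to be $\ztil^\beta_{ln}$ directly, then correct by a boundary using $\htil$. Concretely, after choosing $\betatil^1_{ln}$ so that \ref{equ:mor_rel}$_{ln}$ holds modulo the image of $\mtil_{01}^{C,1}$ on the target side, I would use $k$-projectivity of $SE^{\tensor n}$ to lift the discrepancy along $\mtil_{01}^{C,1} \colon C \to C[0,1]$, exactly as in the diagram defining $\ftil^1_{ln}$ in the proof of Proposition \ref{prop:exis_str}, thereby defining $\htil^1_{ln}$.

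I expect the main obstacle to be the bookkeeping in the inductive step: one must verify that the error term whose nullhomotopy defines $\htil^1_{ln}$ is (a) a $d_0$-cycle — equivalently, lies in $\kernel \mtil_{01}^{C,1}$ — and (b) maps to zero in $H_*^v(C)$, so that it is a $\mtil_{01}^{C,1}$-boundary and the lift exists. Claim (a) is a Stasheff-type identity $\mtil_{01}^{C,1} \circ (\text{error}) = \pm(\text{terms from lower inductive stages}) = 0$, proved by substituting the relations \ref{equ:obj_rel}, \ref{equ:mor_rel}$_{<l+n}$, and \ref{equ:hty}$_{<l+n}$; this is the analog of Lemma \ref{lem:zln_cocycle} and is the computation I would expect to require the most care with signs, so in the write-up I would factor it out as a lemma parallel to the ones in Section \ref{sec:anti}. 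Claim (b) follows because on $H_*^v$ everything reduces to the identity $H_*^v(g\beta) = H_*^v(\alpha f)$ already established in the base case together with the inductive hypothesis. Finally, the unit conditions: since by hypothesis the inclusions of the submodules on which \ref{equ:mor_unitcondition} and \ref{equ:hty_unitcondition} demand vanishing are split (the split coming from the splitting of $\eta_E$), I would, as in Proposition \ref{prop:exis_str}, arrange $\betatil^1_{ln}$ and $\htil^1_{ln}$ to vanish there, completing the induction.
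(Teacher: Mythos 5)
Your overall architecture --- a simultaneous induction on $u+v$ constructing $\beta$ and the homotopy $h$ together, with the base case handled by $k$-projectivity of $E$ and the agreement of the two induced maps on vertical homology --- matches the paper's, and your base case is essentially right. But the core of the inductive step is wrong: you try to extract $\betatil^1_{ln}$ from the map relation \ref{equ:mor_rel} for $\beta$ itself. Since both $E$ and $F$ are minimal, $\mtil_{01}^{E}=0=\mtil_{01}^{F}$ and the top component $\betatil^1_{ln}$ enters \ref{equ:mor_rel}$_{ln}$ only with coefficient zero; that relation is a constraint on the lower components, not an equation one can solve for $\betatil^1_{ln}$. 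Moreover its error term lands in $F[l,2-(l+n)]$ whereas $\beta_{ln}$ must land in $F[l,1-(l+n)]$, so ``set $\betatil^1_{ln}$ to be $\ztil^{\beta}_{ln}$'' is a degree mismatch as well as logically vacuous. The correct mechanism is that the new component is forced by the \emph{homotopy} identity: rewriting \ref{equ:hty}$_{ln}$ as $\mtil_{01}^{C,1}\htil^1_{ln}=(-1)^l\,\gtil^1_{01}\betatil^1_{ln}-\ytil_{ln}$, where $\ytil_{ln}$ is assembled from previously constructed data, one shows $\mtil_{01}^{C,1}\ytil_{ln}=0$ (Lemma \ref{lem:yln_cycle}) and then \emph{defines} $\betatil^1_{ln}$ to be the vertical homology class of $\ytil_{ln}$ in $F=H_*^v(C)$. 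This is exactly what makes your claim (b) true: the class of $(-1)^l\gtil_{01}^1\betatil^1_{ln}-\ytil_{ln}$ in $H^v_*(C)$ vanishes by the very definition of $\betatil^1_{ln}$, not as a formal consequence of $H_*^v(g\beta)=H_*^v(\alpha f)$ in bidegree $(0,1)$; with any other choice of $\betatil^1_{ln}$ the lift $\htil^1_{ln}$ would fail to exist.

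Because $\betatil^1_{ln}$ is defined this way rather than ``so that \ref{equ:mor_rel}$_{ln}$ holds,'' there is a second verification your plan omits entirely: one must prove that the resulting family $\betatil_{ij}$ actually satisfies the map relation \ref{equ:mor_rel} in total degree $l+n+1$. This is the content of Lemma \ref{lem:yln_works}, which shows that the difference of the two sides of \ref{equ:mor_rel}, evaluated on the cocycle representatives $\ytil_{ij}$ and $\ztil_{ij}$, is a vertical boundary in $C$ and hence vanishes in $F$. So the write-up needs two obstruction lemmas (the analogues of Lemmas \ref{lem:yln_cycle} and \ref{lem:yln_works}), not one. (A minor point you inherit from the statement: $H_*^v(g)$ does not typecheck as a map $E\to F$; the intended normalization of the linear term is $\beta_{01}=H_*^v(\alpha)$.)
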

The strategy for the proof is the same as for the last proposition. We
define
\[ \begin{split} 
   \ytil_{ln} 
  =& \mtil_{11}^{C,1} \htil^1_{l-1,n} 
   + (-1)^l \alphaftil^1_{ln} 
   -(-1)^l \hspace{-1.5ex} \sum_{i+j\geq2;\, i+p=l}\hspace{-1.5ex} 
  \gtil_{ij}^1 \betatil^j_{pn} 
   + (-1)^l \hspace{-1.5ex} \sum_{\substack{i+p=l;\, 1 \leq j \leq n}} \hspace{-1.5ex}
  \htil^1_{ij} \mtil^{E,j}_{pn}
  \\
  &+ \sum_{i+p=l;\,j+q=n} \left( \mtil_{02}^{C,1} ( \gbetatil_{ij}^1
    \tensor \htil^1_{pq}) + (-1)^p \mtil_{02}^{C,1} ( \htil^1_{ij} \tensor
    \alphaftil_{pq}^1)\right) \end{split} \] and observe that the identity
\ref{equ:hty} can be rewritten as
\[ \mtil_{01}^C \htil^1_{uv} = (-1)^u \, \gtil^1_{01} \betatil^1_{uv} - \ytil_{uv}. \]

\begin{proof}[Proof of Proposition \ref{prop:funct_str}]
  We set $\beta_{01} = H_*^v(g_{01})$. Since $E$ is $k$-projective,
  there exists $\htil_{01}$ with $\mtil^{C,1}_{01} \htil_{01} =
  \alphatil^1_{01} \ftil^1_{01} - \gtil_{01}^1 \betatil^1_{01}$. The
  $\beta_{01}$ satisfies \ref{equ:mor_rel} for $u+v \leq 2$ and
  \eqref{equ:mor_unitcondition}, and $\htil_{01}$ satisfies \ref{equ:hty}
  for $u+v=1$ and can be chosen to satisfy \eqref{equ:hty_unitcondition}.
  
  Assume that we have constructed $\betatil_{ij}$ and $h_{ij}$ for $i+j<
  l+n$ such that \ref{equ:mor_rel} holds for $ u+v \leq l+n$ and
  \ref{equ:hty} holds for $u+v < l+n$. Then $y_{ln}$ is defined, and Lemma
  \ref{lem:yln_cycle} provides $\mtil^{C,1}_{01} \ytil_{ln}=0$. So we
  define $\betatil^1_{ln}$ to be represented by $\ytil_{ln}$. Lemma
  \ref{lem:yln_works} shows that \ref{equ:mor_rel} holds for $u+v=l+n+1$,
  and the unit condition \eqref{equ:mor_unitcondition} can be checked
  readily.  Since $E$ is $k$-projective we can find $\htil_{ln}$ such that
  \ref{equ:hty} holds for $u+v=l+n$. The splitting of the unit of $E$
  enables us to change $\htil_{ln}$ such that \eqref{equ:hty_unitcondition}
  holds.
\end{proof}

We can now give the proof of the first main theorem from the introduction.

\begin{proof}[Proof of Theorem \ref{thm:min_models}]
  Given a $k$-dga $A$, we apply Theorem \ref{thm:exis_uni_res} to obtain a
  $k$-projective $E_1$-resolution $B \to A$. Then $B$ satisfies the
  assumptions of Proposition \ref{prop:exis_str}, which provides an
  $E_2$-equivalence $E \to B$. The composition $E \to A$ is the desired
  minimal \dainfalg model.

  The resolution $B \to A$ is well defined up to $E_2$-equivalence of
  $k$-projective $E_1$-resolutions. Given such an $E_2$-equivalence $B \to
  C$, Proposition \ref{prop:funct_str} provides an $E_2$-equivalence $E \to
  F$ between the associated minimal \dainf{}-algebras.
\end{proof}

Lemma \ref{lem:reso_welldef} and Proposition \ref{prop:funct_str} imply

\begin{corollary}
Minimal \dainfalg models of quasi-isomorphic dgas may be related by a
zig-zag of $E_2$-equivalences between minimal \dainf{}-algebras. 
\end{corollary}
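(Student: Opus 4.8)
The plan is to chain together the two tools just established. Given quasi-isomorphic dgas $A$ and $A'$, I would first connect them by a map: in general quasi-isomorphic dgas need not admit a direct map, but they are related by a zig-zag of quasi-isomorphisms, so it suffices to treat the case where there is a single quasi-isomorphism $A \to A'$ and then concatenate the resulting zig-zags of $E_2$-equivalences. So assume $A \to A'$ is given. Pick $k$-projective $E_1$-resolutions $B \to A$ and $B' \to A'$; by Theorem \ref{thm:exis_uni_res} (via Proposition \ref{prop:exis_str}) these give rise to minimal \dainfalg models $E \to B \to A$ and $E' \to B' \to A'$, so it is enough to relate $E$ and $E'$ by a zig-zag of $E_2$-equivalences between minimal \dainf{}-algebras.

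Next I would feed the map $A \to A'$ into Lemma \ref{lem:reso_welldef}. This produces a commutative diagram of bidgas
\[ \xymatrix@-1pc{ B_2 \ar[r] \ar[d] & B_3 \ar[r] \ar[d] & B_3' \ar[d] & B_2' \ar[d] \ar[l] \\ B \ar[r] & A \ar[r] & A' & B' \ar[l] } \]
in which every vertical map and every map in the top row is a map of $k$-projective $E_1$-resolutions, and all four downward maps are $E_2$-equivalences of bidgas (they are so because the $C(\Gamma(-)^{\mathrm{cof}}) \to C(\Gamma(-))$ maps and the Dold--Kan comparison maps of Lemma \ref{lem:dold_kan_prop} are $E_2$-equivalences). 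To each of $B$, $B_2$, $B_3$, $B_3'$, $B_2'$, $B'$ I attach a minimal \dainfalg model as in Proposition \ref{prop:exis_str}; call them $E$, $E_2$, $E_3$, $E_3'$, $E_2'$, $E'$. For the two models of $A$ (namely $E$ from $B$ and whatever I started with) I note that after restricting attention to resolutions of the same dga, Proposition \ref{prop:funct_str} applied to the identity already shows the corresponding minimal models agree up to $E_2$-equivalence, so I may take $E$ itself, and likewise $E'$.

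Then for each of the five bidga maps appearing in the diagram above ($B_2 \to B$, $B_2 \to B_3$, $B_3' \to B_3$, $B_2' \to B_3'$, $B_2' \to B'$ — reading the zig-zag $B \leftarrow B_2 \to B_3 \leftarrow B_3' \leftarrow B_2' \to B'$), I invoke Proposition \ref{prop:funct_str}: since each such map is a bidga map whose source and target have $k$-projective vertical homology equipped with the \dainf{}-structures and maps coming from Proposition \ref{prop:exis_str}, it yields a \dainfalg map $\beta$ between the corresponding minimal models with $\beta_{01} = H_*^v(g_{01})$. Because the underlying bidga map is an $E_2$-equivalence and $H_*^v$ of a map of bidgas computes the $E_1$-page, the induced $\beta_{01}$ is an $E_1$-equivalence on vertical homology, hence $\beta$ is an $E_2$-equivalence of minimal \dainf{}-algebras (using Definition \ref{def:E2equiv}). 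Stringing these five $E_2$-equivalences together in the appropriate directions gives a zig-zag $E \leftarrow E_2 \to E_3 \leftarrow E_3' \leftarrow E_2' \to E'$ of $E_2$-equivalences between minimal \dainf{}-algebras, and concatenating over the original zig-zag of quasi-isomorphisms $A \simeq A'$ completes the argument.

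The main obstacle is bookkeeping rather than conceptual: one must make sure that the minimal-model data chosen for each intermediate bidga is used consistently across all applications of Proposition \ref{prop:funct_str} (each $E_i$ carries one fixed choice of minimal structure and one fixed map to its bidga), and that the $E_2$-equivalence conclusion genuinely follows — i.e., that ``$\alpha$ an $E_2$-equivalence of bidgas'' forces ``$H_*^v(\alpha_{01})$ an isomorphism'', which holds since for bidgas the vertical homology \emph{is} the $E_1$-term and an $E_2$-equivalence is in particular an $E_1$-equivalence of twisted chain complexes. The reduction from ``quasi-isomorphic'' to ``connected by a single quasi-isomorphism'' is routine but should be stated, since a priori a quasi-isomorphism of dgas need not be invertible.
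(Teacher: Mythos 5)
Your outline is the argument the paper intends: the corollary is deduced directly from Lemma \ref{lem:reso_welldef} together with Proposition \ref{prop:funct_str}, exactly as you do, and your supporting details (reduction to a single quasi-isomorphism, the two-out-of-three observation making the middle map $B_3 \to B_3'$ of the zig-zag an $E_2$-equivalence, and keeping one fixed minimal structure per bidga) are the right ones. One correction to your justification of the key step, though: you assert that ``$\alpha$ an $E_2$-equivalence of bidgas forces $H^v_*(\alpha)$ an isomorphism'' because ``an $E_2$-equivalence is in particular an $E_1$-equivalence.'' That implication runs the wrong way --- the paper only records that every $E_1$-equivalence is an $E_2$-equivalence, and $H^v_*(\alpha)$ is typically \emph{not} an isomorphism (the resolution $B \to A$ itself is an $E_2$-equivalence whose $H^v_*$ is a resolution, not an isomorphism). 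Fortunately you do not need this false claim: since the minimal models $E$ and $F$ have $m_{01}=0$, one has $H^v_*(\beta_{01}) = \beta_{01}$, which is the map induced by $\alpha$ on vertical homology, so $H^h_*(H^v_*(\beta_{01})) = H^h_*(H^v_*(\alpha))$, and the latter is an isomorphism precisely by the definition of $\alpha$ being an $E_2$-equivalence. With that repair your argument is complete and coincides with the paper's.
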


\begin{remark}
We do not claim that every termwise $k$-projective resolution $E$ of
$H_*(A)$ may be extended to a minimal \dainfalg model for $A$. We only show
that this happens if $E$ is the vertical homology of a bidga resolving
$A$. 
\end{remark}

\begin{remark}
  The underlying twisted chain complex of a minimal \dainfalg model of $A$
  is a `homological multicomplex resolution' of the underlying chain
  complex of $A$; compare \cite{Saneblidze_derived}.
\end{remark}

\section{Examples and an application} \label{sec:ex_appl}

In this section, we give two examples for minimal \dainfalg structures and
apply our theory to define the derived Hochschild cohomology class
associated with a dga. 

\begin{example} \label{ex:minimalmodel} Let $p$ be an odd prime, $k =
  \mZ/p^2$, and $A$ the dga $(k \xrightarrow{\cdot p} k)$.  In Example
  \ref{ex:E1res}, we gave a $k$-projective $E_1$-resolution $B \to A$ with
  $E := H_*^v(B) = \Lambda_k^*(\ovl{x}, \ovl{z}) \tensor
  \Gamma_k^*(\ovl{y})$.  Following the proof of Proposition
  \ref{prop:exis_str}, we now extend the multiplication and $d_1^E$ to a
  \dainfalg structure on $E$.  A cycle selection map $f_{01}$ can be
  defined by extending
  \[ f_{01}(\ovl{z}) = au, f_{01}(\ovl{x}) = x, \textrm{ and }
  f_{01}(\ovl{y}_i) = y_{i}\] multiplicatively. Then $f_{01} m_{02}^{E} -
  m_{02}^B(f_{01} \tensor f_{01}) = 0$. Consequently, $m_{03}^E=0$, and we
  can set $f_{02}=0$ and $f_{03}=0$. Since $m_{11}^{E} = d^{E}_1$, we
  define $f_{11} \colon E \to B[1,-1]$ by
  \[ f_{11}(y_i) = (v+av) x y_{i-1} \text{ and } f_{11}(\ovl{x} \,
  \ovl{y}_i) = (pu-v)y_i. \] The $m^{E}_{12}$ is represented by \[z_{12}
  = f_{11} m_{02}^E-m_{02}^B(f_{01} \tensor f_{11} + f_{11}\tensor f_{01}).
  \]  
  One calculates $z_{12}=0$, hence $m_{12}^E=0$ and we can set $f_{12}=0$.
  The $m_{21}^E$ is represented by $z_{21}=f_{11} m_{11}^E + m_{11}^B f_{11}$. One
  checks
\[ m_{21}^{E}(\ovl{y}_i) = \ovl{z} \, \ovl{y}_{i-1} \text{ and }
m_{21}^{E}(\ovl{x} \, \ovl{y}_i) = \ovl{z} \, \ovl{x} \, \ovl{y}_{i-1}.
\] Furthermore, we set $f_{21}(\ovl{x}\,\ovl{y}_i)=puvxy_{i-1}$ and $f_{21}(\ovl{y}_i)=0$.
The $m^E_{ij}$ and $f_{ij}$ for $i+j \geq 4$ vanish for degree reasons.

  Example \ref{ex:shukla class} below provides one way to detect that this
  $E$ encodes non-trivial information about $A$, for example the fact that
  $A$ is not quasi-isomorphic to the formal dga $H_*(A)$ with trivial
  differential. Of course, Theorem \ref{thm:antimin_models} provides a
  different reason this.
\end{example}

The example indicates the general procedure for finding a minimal \dainfalg
model of a given dga $A$. Since the resolutions provided by Theorem
\ref{thm:exis_uni_res} are too large to write down in terms of generators
and relations, one has to first guess a $k$-projective $E_1$-resolution,
for example by lifting a $k$-projective resolution of $H_*(A)$ to a bidga
$B$ coming with an $E_2$-equivalence $B \to A$. Then one can use the
constructive procedure of Proposition \ref{prop:exis_str} to build $E$ from
$B$.

\begin{remark} \label{rem:gen_kadeishvili} If $k$ happens to be a field or,
  more generally, $H_*(A)$ is $k$-projective and the unit $k \to H_0(A)$
  splits, the minimal model of $A$ as an $A_{\infty}$-algebra arising from
  \cite[Theorem 1]{Kadeishvili_homology} is one possible choice for the
  minimal \dainfalg model of Theorem \ref{thm:min_models}.
\end{remark}

Let $M$ be a $k$-module and let $P$ be a $k$-projective
resolution of $M$.  The endomorphism dga $\Hom_k(P,P)$ is defined by
$\Hom_k(P,P)_j = \Hom_k(P[j],P)$, where the latter denotes maps of graded
$k$-modules. Its differential is $d^{\Hom}_j(f)= d^P f - (-1)^j f d^P$.
The quasi-isomorphism type of $\Hom_k(P,P)$ does not depend on the choice
of the resolution $P$.  Its homology algebra is the Yoneda $\Ext$-algebra of $M$
with a sign shift in the degree, i.e., $H_* \Hom_k(P,P) = \Ext^{-*}_k(M,M)$.

A minimal \dainf{}-model for $\Hom_k(P,P)$ is a resolution $E$ of
$\Ext_k^*(M,M)$ with structure maps $m_{ij}^E$. By Theorem
\ref{thm:antimin_models}, this structure enables us to recover the
quasi-isomorphism type of $\Hom_k(P,P)$.

\begin{example}
Let $k=\mZ$, $p$ a prime, and $M=\mZ/p$. We will see that the resolution of
$\Ext^*_{\mZ}(\mZ/p,\mZ/p)$ given by $\mZ \xleftarrow{\cdot p} \mZ$ in
every degree of the $\Ext$-grading is part of a minimal \dainf{}-algebra. 

Let $P$ be the two step resolution $\mZ \xleftarrow{\cdot p} \mZ$ of $\mZ/p$.
The dga $A=\Hom_{\mZ}(P,P)$ may be described as follows (compare \cite[\S
3.1]{Dwyer_G_complete}): Its underlying ungraded algebra is $\Mat_2(\mZ)$,
the algebra of $2 \times 2$-matrices with entries in $\mZ$. Viewing
elements of $P$ as column vectors with top entry the degree $0$ part,
$\Mat_2(\mZ)$ operates on $P$. The resulting grading of $A$ is
\[ 
A_1 = \left\{ \bigl( 
\begin{smallmatrix} 0&0 \\ *&0 \end{smallmatrix} 
\bigr) \right\}, 
\quad 
A_0 = \left\{ \bigl( 
\begin{smallmatrix} *&0 \\ 0&* \end{smallmatrix} 
\bigr) \right\}, \text{ and} \quad 
A_{-1} = \left\{ \bigl( 
\begin{smallmatrix} 0&* \\ 0&0 \end{smallmatrix} 
\bigr) \right\}. 
\] 
On the standard basis, the differential has values
\[ 
d\bigl( 
\begin{smallmatrix} 0&0 \\ 1&0 \end{smallmatrix} 
\bigr)  
 = 
\bigl( 
\begin{smallmatrix} p&0 \\ 0&p \end{smallmatrix} 
\bigr),  
\quad 
d\bigl( 
\begin{smallmatrix} 1&0 \\ 0&0 \end{smallmatrix} 
\bigr)  
 = 
-\bigl( 
\begin{smallmatrix} 0&p \\ 0&0 \end{smallmatrix} 
\bigr),  
\quad 
d\bigl( 
\begin{smallmatrix} 0&0 \\ 0&1 \end{smallmatrix} 
\bigr)  
 = 
\bigl( 
\begin{smallmatrix} 0&p \\ 0&0 \end{smallmatrix} 
\bigr), \text{ and}  
\quad 
d\bigl( 
\begin{smallmatrix} 0&1 \\ 0&0 \end{smallmatrix} 
\bigr)  
 = 0.
\]
Indeed, $H_*(A)$ is an exterior algebra generated by 
$\left[ \bigl( 
\begin{smallmatrix} 0&1 \\ 0&0 \end{smallmatrix} 
\bigr)\right]$ in degree $-1$.

We give a $k$-projective $E_1$-resolution $\alpha \colon B \to A$. Let $B =
\Mat_2(\mZ[t] \tensor \Lambda^*_{\mZ}(\lambda))$ with $|t|=(0,0)$ and
$|\lambda| = (1,0)$. The horizontal grading is specified through the
generators, and the vertical grading comes from the matrix algebra as
above.

The vertical differential $d^B_0$ is defined by setting 
\[ 
d_0^B\bigl( 
\begin{smallmatrix} 0&0 \\ 1&0 \end{smallmatrix} 
\bigr)  
 = 
\bigl( 
\begin{smallmatrix} t&0 \\ 0&t \end{smallmatrix} 
\bigr),  
\quad 
d_0^B\bigl( 
\begin{smallmatrix} 1&0 \\ 0&0 \end{smallmatrix} 
\bigr)  
 = 
-\bigl( 
\begin{smallmatrix} 0&t \\ 0&0 \end{smallmatrix} 
\bigr),  
\quad 
d_0^B\bigl( 
\begin{smallmatrix} 0&0 \\ 0&1 \end{smallmatrix} 
\bigr)  
 = 
\bigl( 
\begin{smallmatrix} 0&t \\ 0&0 \end{smallmatrix} 
\bigr), \text{ and}  
\quad 
d_0^B\bigl( 
\begin{smallmatrix} 0&1 \\ 0&0 \end{smallmatrix} 
\bigr)  
 = 0,
\]
and requiring $d_0^B$ to be $t$- and $\lambda$-linear. The horizontal
differential $d_1^B$ is $t$-linear and maps $\lambda$ to $p-t$. The map
$\alpha$ is determined by asking $t \mapsto p$.

One can check that $B \to A$ has the required properties. The vertical
homology $H_*^v(B) = E$ is $\Lambda_{\mZ}^*(a,b)$ with $|a|=(0,-1)$ and
$|b|=(1,0)$. Here
\[
\iota=
\left[\bigl( 
\begin{smallmatrix} 1&0 \\ 0&1 \end{smallmatrix} 
\bigr)\right],  
\quad 
a=
\left[\bigl( 
\begin{smallmatrix} 0&1 \\ 0&0 \end{smallmatrix} 
\bigr)\right],  
\quad 
b=
\left[\bigl( 
\begin{smallmatrix} \lambda &0 \\0 &\lambda \end{smallmatrix} 
\bigr)\right], \text{ and}  
\quad 
ab=
\left[\bigl( 
\begin{smallmatrix} 0 &\lambda\\ 0 &0\end{smallmatrix} 
\bigr)\right],  
\quad 
\] 
and $f_{01}$ can be chosen to pick the displayed generators. The $m_{02}^E$
provides the multiplicative structure encoded in the exterior
algebra. Since $f_{01}$ is multiplicative, we may choose $f_{02}=0$ (and
$f_{0j}=0, m_{0j}^E=0$ for $j \geq 3$). Now $z_{11} = m_{11}^B f_{01}$,
hence $m_{11}^E(b)=p\iota$ and $m_{11}^E(ab) = pa$. Since $m_{01}^B f_{11}
= f_{01} m_{11}^E - z_{11}$, we may choose 
\[
f_{11}(b) = 
\bigl( 
\begin{smallmatrix} 0&0 \\ 1&0 \end{smallmatrix} 
\big) 
\quad \text{and} \quad 
f_{11}(ab) = 
\bigl( 
\begin{smallmatrix} 0&0 \\ 0&1 \end{smallmatrix} 
\big) 
\]
The $m_{12}$ is represented by $z_{12}= f_{11} m_{02}^E -
m_{02}^B(f_{01}\tensor f_{11}) - m_{02}^B(f_{11}\tensor f_{01})$. One
checks that
\[ m_{12}^E(a \tensor b) = \iota,\quad m_{12}^E(a \tensor ab) = a,\quad
m_{12}^E(ab \tensor b) = -b, \text{ and} \quad m_{12}^E(ab \tensor ab) =
-ab,\] and that $m_{12}^E$ is trivial elsewhere. One calculates that,
mostly for degree reasons, all other $m_{ij}^E$ vanish. The asymmetry of
$m_{12}^E$ comes from the choice of the value of $f_{11}(ab)$.  Changing it
to $-\bigl( \begin{smallmatrix} 1&0 \\ 0& 0\end{smallmatrix} \big)$ changes
$m_{12}^E$.
\end{example} 

Our next aim is to define a variant of {\em derived Hochschild cohomology} for
graded $k$-algebras. It will generalize the Hochschild cohomology of graded
$k$-algebras which is for example used in \cite{Benson_K_S_realizability}.

We briefly recall from \cite[\S 4]{Baues_P_shukla} one way to define
the derived Hochschild cohomology $\dHH$ for ungraded
$k$-algebras. (In \cite{Baues_P_shukla}, this theory is called {\em
  Shukla cohomology}.)  Considering an ungraded $k$-algebra $\Lambda$
as a $\mN$-graded dga concentrated in degree $0$, one can find a
quasi-isomorphism $A \to \Lambda$ from a degreewise $k$-projective dga
$A$.  The Hochschild complex of $A$ is the bicomplex $C^{st}(A) :=
\Hom_k(A^{\tensor s}, A[t])$ with $s,t \geq 0$. It has the usual
Hochschild differential $d_{\HH} \colon C^{st} \to C^{s+1,t}$ and a
differential $d_{\Hom} \colon C^{st} \to C^{s,t+1}$ induced from that
of $A$ by considering $C^{s*}$ as the $\Hom$-complex with source
$A^{\tensor s}$ and target $A$. Letting $\HH^*(A)$ be the cohomology
of the total complex of $C^{st}$, we define $\dHH^*(\Lambda) :=
\HH^*(A)$. This is well defined since a quasi-isomorphism of
$k$-projective dgas $A \to A'$ over $\Lambda$ induces a chain of
isomorphisms $\HH^*(A) \xrightarrow{\iso} \HH^*(A,A')
\xleftarrow{\iso} \HH^*(A')$; compare \cite[4.1.1
Lemma]{Baues_P_shukla}.

Let $E$ be a $k$-bidga with $d_0^E=0$, and let $F$ be an $E$-bimodule in
the category $\biChxes_k$ with $d^F_0=0$. On $E$ and $F$ we have
differentials $d_1^E$ and $d_1^F$ and multiplication maps $E \tensor E \to
E$, $E \tensor F \to F$, and $F \tensor E \to F$ satisfying appropriate
relations.

We define an $(\mN,\mN,\mZ)$-trigraded $k$-module $C^{rst}(E,F) = \Hom_k(
E^{\tensor r}, F[s,t])$. It has a Hochschild differential
\[ \begin{split} d^r_{\HH}
  \colon &C^{rst}(E,F) \to C^{r+1,st}(E,F) \\
  &c \mapsto \mu(\eins \tensor c) + (-1)^{r+1} \mu(c \tensor \eins) +
  \sum_{1\leq j \leq r} (-1)^j c(\eins^{\tensor j-1} \tensor \mu \tensor
  \eins^{\tensor r-j}) \end{split} \] 
and a differential
\[ \begin{split} 
d^s_{\Hom} \colon &C^{rst} \to C^{r,s+1,t}\\
&c \mapsto d_1^F c - (-1)^{\langle d_1^E, c\rangle}  \sum_{1\leq j \leq r} c
(\eins^{\tensor j-1} \tensor d_1^E \tensor \eins^{\tensor w-j}) 
\end{split} \] induced by the horizontal differentials of $E$ and $F$. It is
easy to check $d_{\HH} d_{\HH} = 0$, $d_{\Hom} d_{\HH} = d_{\HH} d_{\Hom}$ and
$d_{\Hom} d_{\Hom} = 0$.

We define $\Tot^{qt} C(E,F) = \bigoplus_{r+s=q} C^{rst}(E,F)$ and
\[ d^q_{\Tot} \colon \Tot^{qt}C(E,F) \to \Tot^{q+1}C(E,F), \quad c\mapsto
d_{\Hom}(c) - (-1)^q d_{\HH}(c)
\]
The relations of $d_{\HH}$ and $d_{\Hom}$ imply $d_{\Tot} d_{\Tot} = 0$.
The graded Hochschild cohomology of $E$ with coefficients in $F$ is the
homology of this total complex, i.e., $\HH^{qt}(E,F) =
H^q(\Tot^{*t}C(E,F))$.

Now let $\Lambda$ be a graded $k$-algebra. We view $\Lambda$ as a
$k$-dga with trivial differential. Applying vertical homology to the
resolution provided by Theorem \ref{thm:exis_uni_res}, we obtain a
termwise $k$-projective resolution $E \to \Lambda$. The resolution is
well defined up to $E_2$-equivalence between such $E$. We define the
{\em derived Hochschild cohomology} of $\Lambda$ by
\[\dHH^{qt}(\Lambda) = \HH^{qt}(E,E).\] 
An $E_2$-equivalence $\alpha \colon E \to F$ of termwise $k$-projective
resolutions induces a chain of isomorphisms $\HH^{qt}(E,E)
\xrightarrow{{\alpha}_*} \HH^{qt}(E,F) \xleftarrow{\alpha^*}
\HH^{qt}(F,F)$. This ensures that $\dHH^{qt}(\Lambda)$ is well defined.

\begin{proposition} \label{prop:sh_class} Let $A$ be a $k$-dga, and let $E
  \to A$ be a minimal \dainfalg model of $A$. Then the sum $m_{03}^E +
  m_{12}^E + m_{21}^E$ is a cocycle in the complex computing
  $\HH^{3,-1}(E,E) = \dHH^{3,-1}(H_*(A))$. Its cohomology class $\gamma_A \in
  \dHH^{3,-1}(H_* A)$ does not depend on the choice of $E$.
\end{proposition}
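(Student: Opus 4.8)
The plan is to unpack the meaning of the displayed equation in two ways: first as a cocycle condition, and second to establish independence of the choice of minimal model.

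\medskip

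\textbf{Step 1: The cocycle condition.} First I would observe that for a minimal \dainfalg $E$ one has $m_{01}^E=0$, so the underlying bidga structure used to build $C(E,E)$ in this section has trivial vertical differential, and the degree-$0$ multiplication is $m_{02}^E$ while the horizontal differential is $m_{11}^E$. I would then sort the relations \ref{equ:obj_rel} according to the total degree $u+v$. The relation with $u+v=3$, namely $(uv) \in \{(0,3),(1,2),(2,1)\}$ up to the admissibility constraints, expresses precisely that $m_{03}^E + m_{12}^E + m_{21}^E$, viewed as an element of $\Tot^{3,-1}C(E,E) = \bigoplus_{r+s=3} \Hom_k(E^{\tensor r}, E[s,-1])$ (with $m_{03}$ the $r=3$ part, $m_{12}$ the $r=2$ part, $m_{21}$ the $r=1$ part), is annihilated by $d_{\Tot}$. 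Concretely: the terms of \ref{equ:obj_rel} involving $m_{02}^E$ in the outer or inner slot are exactly the $d_{\HH}$-terms applied to the lower-arity structure maps, and the terms involving $m_{11}^E$ are exactly the $d_{\Hom}$-terms; the terms involving $m_{03},m_{12},m_{21}$ themselves vanish because they would require $m_{01}^E$ or $m_{02}^E$-composites at total degree $3$ that are either zero (minimality) or already accounted for. Matching the signs $(-1)^{rq+t+pj}$ in \ref{equ:obj_rel} against the signs in $d_{\HH}$, $d_{\Hom}$, and $d_{\Tot}$ is the one genuinely fiddly part, but it is a finite bookkeeping check. Hence $m_{03}^E+m_{12}^E+m_{21}^E$ is a cocycle.

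\medskip

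\textbf{Step 2: Independence of the minimal model.} By Theorem \ref{thm:min_models}, any two minimal \dainfalg models of $A$ are connected by a zig-zag of $E_2$-equivalences between $k$-projective minimal \dainfalgs, and by the Corollary after the proof of Theorem \ref{thm:min_models} the same holds for quasi-isomorphic dgas; so it suffices to treat a single $E_2$-equivalence $\beta\colon E\to F$ of $k$-projective minimal \dainfalgs. Such a $\beta$ induces an isomorphism $\beta_{01}=H^v_*(\beta)$ on vertical homology, hence $\beta_{01}\colon E\to F$ is an isomorphism of the underlying bidgas $(E,m_{11}^E,m_{02}^E)\to(F,m_{11}^F,m_{02}^F)$. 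Transporting coefficients along $\beta_{01}$ and using the comparison $\HH^{qt}(E,E)\xrightarrow{(\beta_{01})_*}\HH^{qt}(E,F)\xleftarrow{\beta_{01}^*}\HH^{qt}(F,F)$ already invoked in the definition of $\dHH$, the question reduces to showing that $(\beta_{01})_*(m_{03}^E+m_{12}^E+m_{21}^E)$ and $\beta_{01}^*(m_{03}^F+m_{12}^F+m_{21}^F)$ are cohomologous in $C(E,F)$. Here the higher components $\beta_{pq}$ for $p+q\ge 2$ furnish the chain homotopy: I would take the map relation \ref{equ:mor_rel} at total degree $u+v=3$, solve it for the difference of the two sides, and read off that $m_{03}^E+m_{12}^E+m_{21}^E$ and the pullback of $m_{03}^F+m_{12}^F+m_{21}^F$ differ by $d_{\Tot}$ applied to the sum of the relevant $\beta_{pq}$ (the $\beta_{02}$, $\beta_{11}$ components), the $d_{\HH}$-part coming from the multiplicative slots and the $d_{\Hom}$-part from the $m_{11}$-slots, exactly as in the classical $A_\infty$ computation that the class of $m_3$ in Hochschild cohomology is invariant.

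\medskip

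\textbf{Main obstacle.} The conceptual content is routine once the right bookkeeping is in place; the real work is the sign analysis. The signs $\sigma$ governing \ref{equ:mor_rel} are intricate, and matching them against the $(-1)^q$, $(-1)^{r+1}$, $(-1)^j$, and $\langle d_1^E,c\rangle$ signs appearing in $d_{\HH}$, $d_{\Hom}$, and $d_{\Tot}$ is the step most likely to consume space. I expect it is cleanest to do this check after passing to the $\mtil$-normalization of Section \ref{sec:minimal} (as suggested in Remark \ref{rem:signs_explanation}, the bidegrees of $\mtil^E_{ij}$ and $\ftil^1_{ij}$ are independent of $j$), so that the Koszul signs collapse to the uniform pattern of twisted chain complexes; then the identification of the degree-$3$ part of \ref{equ:obj_rel} with $d_{\Tot}$ and of the degree-$3$ part of \ref{equ:mor_rel} with a $d_{\Tot}$-homotopy becomes transparent.
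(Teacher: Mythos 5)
Your proposal follows the paper's proof in both steps: the cocycle condition is extracted from the defining relations of a minimal \dainf{}-algebra, and invariance is obtained by rewriting the map relations \ref{equ:mor_rel} for an $E_2$-equivalence $f$ at total degree $3$ as $(f_{01})_*(m_{03}^E+m_{12}^E+m_{21}^E)-(f_{01})^*(m_{03}^F+m_{12}^F+m_{21}^F)=d_{\Tot}(f_{02}+f_{11})$, exactly as in the text. Two slips should be corrected. First, the cocycle condition is \emph{not} carried by the relations \ref{equ:obj_rel} with $u+v=3$: a composite $m_{ij}m_{pq}$ sits in the relation indexed by $(i+p,\,j+q-1)$, so the compositions of $m_{03},m_{12},m_{21}$ with $\mu=m_{02}$ and $d_1=m_{11}$ occur in the four relations with $u+v=4$, which (using $m_{01}^E=0$ to kill the remaining terms) read $d_{\HH}(m_{03}^E)=0$, $d_{\HH}(m_{12}^E)+d_{\Hom}(m_{03}^E)=0$, $d_{\HH}(m_{21}^E)+d_{\Hom}(m_{12}^E)=0$, and $d_{\Hom}(m_{21}^E)=0$. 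The relations at $u+v=3$ that you cite only see the degree-three maps composed with $m_{01}^E=0$; what they actually say is that $(E,m_{11}^E,m_{02}^E)$ is an associative bidga, i.e., that the complex $C^{rst}(E,E)$ is defined in the first place. Second, an $E_2$-equivalence $\beta$ between minimal \dainf{}-algebras does \emph{not} make $\beta_{01}$ an isomorphism of the underlying bidgas: minimality gives $H^v_*(E)=E$, so the $E_2$-condition only asserts that $\beta_{01}$ induces an isomorphism on horizontal homology, i.e., that it is a quasi-isomorphism between two possibly very different termwise $k$-projective resolutions of $H_*(A)$. Fortunately this stronger claim is never used: all your argument needs is that $(\beta_{01})_*$ and $(\beta_{01})^*$ are isomorphisms on $\HH^{q t}$, which is precisely the comparison already built into the definition of $\dHH$. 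With these two corrections your argument coincides with the paper's.
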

\begin{proof}
  The four formulas \ref{equ:obj_rel} with $u+v=4$ for the $m_{ij}^E$ can
  be expressed as $d_{\HH}(m_{03}^E) = 0$, $d_{\HH}(m_{12}^E) +
  d_{\Hom}(m_{03}^E) = 0$, $d_{\HH}(m_{21}^E) + d_{\Hom}(m_{12}^E) = 0$,
  and $d_{\Hom}(m_{21}^E)=0$.  Hence $d_{\Tot}(m_{03}^E + m_{12}^E +
  m_{21}^E)=0$.

  Let $f \colon E \to F$ be an $E_2$-equivalence of minimal
  \dainf{}-algebras. We check that the elements $(f_{01})_*(m_{03}^E +
  m_{12}^E + m_{21}^E)$ and $(f_{01})^* (m_{03}^F + m_{12}^F + m_{21}^F)$
  of $\Tot^{3,-1}C(E,F)$ represent the same cohomology class. Since $f$ is
  map of \dainfalgs, it satisfies the relations \ref{equ:mor_rel} for
  $u+v=3$, and these can be rewritten as
  \[ (f_{01})_* (m_{03}^E + m_{12}^E + m_{21}^E) - (f_{01})^* (m_{03}^F +
  m_{12}^F + m_{21}^F) = d_{\Tot}(f_{02}+f_{11}). \]
\end{proof}

If a dga $A$ is formal, i.e., quasi-isomorphic to $H_*(A)$ with trivial
differential, then $\gamma_A=0$ since a termwise $k$-projective resolution
of $H_*(A)$ provides a minimal \dainfalg model for $A$ with $m_{ij} =$ for
$i+j \geq 3$. Hence a non-vanishing $\gamma_A$ shows that $A$ is not
formal, and that a given minimal \dainfalg model of $A$ is not
equivalent to one with trivial higher $m_{ij}$.

\begin{remark}
  For a dga over a field, there is a characteristic Hochschild cohomology
  class $\gamma_A \in \HH^{3,-1}(H_*(A))$ which is studied in
  \cite{Benson_K_S_realizability}. It is represented by the $m_3$ of a
  minimal $A_{\infty}$-algebra structure on $H_*(A)$ and determines by
  evaluation all triple matric Massey products in $H_*(A)$. The
  characteristic $\dHH$-class of a dga over a commutative ring $k$ given by
  the previous proposition is the natural generalization. If $k$ happens to
  be a field, the $\gamma_A$ introduced here coincides with the one from
  \cite{Benson_K_S_realizability}, as one can easily see from Remark
  \ref{rem:gen_kadeishvili} and \cite[Remark
  5.8]{Benson_K_S_realizability}.

  If $B$ is a bidga and $B \to A$ an $E_2$-equivalence, $B_{0*}$ is a dga,
  and $B \to A$ restricts to a map $B_{0*} \to A$ of dgas which induces an
  surjection in homology. If $E$ is the minimal \dainfalg structure on
  $H_*^v(B)$, the restriction of $m_{03}^E$ to $H_*(B_{0*})$ is part of an
  $A_{\infty}$-structure on $H_*(B_{0*})$. By evaluation, it determines
  triple Massey products of $B_{0*}$. Moreover, if $\lambda_1, \lambda_2,
  \lambda_3 \in H_*(A)$ satisfy $\lambda_1 \lambda_2 = 0 = \lambda_2
  \lambda_3$, we may lift them to elements of $H_*(B_{0*})$, evaluate
  $m_{03}^E$ on the lifts, and observe that the image of the evaluation
  under the map to $H_*(A)$ is an element of the Massey product $\langle
  \lambda_1, \lambda_2, \lambda_3 \rangle$. Hence the $\dHH$-class
  determines all triple (matric) Massey products in $H_*(A)$. The
  difference to the $\HH$-class of \cite{Benson_K_S_realizability} is that
  the $\dHH$-class does not determine the Massey products in a $k$-linear
  fashion, since lifts of elements along a surjection into a not necessarily
  projective $k$-module are involved.

  We expect the $\dHH$-class $\gamma_A$ of a dga to share more properties
  with the $\HH$-class of \cite{Benson_K_S_realizability}, like the
  connection to realizability obstructions.  We are not going to elaborate
  those here.  The $\dHH$-class should also be compared to the universal
  Toda bracket of a ring spectrum studied in \cite{Sagave_universal}. The
  latter class is an even further generalization of the (derived)
  Hochschild class and arises when one allows the sphere spectrum of stable
  homotopy theory as ground ring. This class is an element in a Mac Lane
  cohomology group and determines triple Toda brackets.
\end{remark}

\begin{remark}
  Derived Hochschild cohomology classes associated to dgas also appear in
  \cite{Baues_P_shukla} and \cite{Dugger_S_topological}: a dga $A$ with
  only $A_0$ and $A_1$ possibly non-zero gives rise to a crossed extension
  of $H_0(A)$ by $H_1(A)$ and hence to a cohomology class in
  $\dHH_k^3(H_0(A),H_1(A))$ \cite[4.4.1. Theorem]{Baues_P_shukla}. More
  generally, for a dga $A$ with non-negative homology, its first
  $k$-invariant lies in $\dHH_k^3(H_0(A),H_1(A))$ \cite[\S
  4]{Dugger_S_topological}.

  If $E \to A$ is a minimal \dainfalg model of $A$, $E_{*0}$ is a
  $k$-projective resolution of $H_0(A)$, and there is a restriction map
  $\dHH_k^{3,-1}(H_*(A)) \to \dHH_k^3(H_0(A),H_1(A))$.  One can check that
  the image of $\gamma_A$ under this map recovers the cohomology classes
  described above.
\end{remark}

\begin{example} \label{ex:shukla class} We continue Example
  \ref{ex:minimalmodel} by applying Proposition \ref{prop:sh_class} to
  the minimal \dainfalg $E$ described there. In this case, the sum
  representing $\gamma_{A}$ consists only of $m_{21}^{E}$. The
  $m_{21}^{E}$ cannot be a boundary in the complex computing the
  derived Hochschild cohomology: $p\cdot m_{11}^{E}=0$, while $p\cdot
  m_{21}^{E}$ is non-zero. Hence $\gamma_A \in
  \dHH^{3,-1}_{\mZ/p^2}(H_* A)$ is non-trivial.

  This illustrates that the non-triviality of $E$ is a consequence of the
  information encoded in the higher degrees of the horizontal grading.
\end{example}

\section{Anti-minimal models}\label{sec:anti}
The aim of this section is to prove Theorem \ref{thm:antimin_models}, which
provides an `anti-minimal' model of a \dainf{}-algebra. To this end, we
introduce modules over \dainfalgs and show that the module category is
enriched in twisted chain complexes. For a \dainfalg $E$, we thus have an
endomorphism tga of the $E$-module $E$. Its total complex is a dga. If $E$
is the minimal \dainfalg model of a dga $A$, this dga associated with $E$
recovers the quasi-isomorphism type of $A$. These constructions are
motivated by the $\Hom$-complex of maps between modules over
$A_{\infty}$-algebras described in Keller's survey \cite[\S
6.3]{Keller_introduction} and studied in more detail in Lef\`evre's thesis
\cite[\S 4]{Lefevere_sur}.

Let $E$ be a \dainf{}-algebra. We write $TSE$ for the unreduced tensor
algebra on $SE$, i.e., $TSE = \bigoplus_{j \geq 0} SE^{\tensor j}$.  Let
$M$ be a bigraded $k$-module. Let $\mtil^{M,1}_{ij} \colon SM \tensor
SE^{\tensor j-1} \to SM[i,1-i]$ be a family of maps with $i\geq 0$ and $j
\geq 1$. These maps assemble to a map $\mtil^{M,1}_i \colon SM \tensor TSE
\to SM[i,1-i]$. More generally, we define
\[ 
\mtil^{M,q}_{ij} := \mtil^{M,1}_{i,j-q+1} \tensor \eins^{\tensor q-1} +
\eins \tensor \mtil^{E,q-1}_{i,j-1}
\]
where the second summand is understood to be zero if $q=1$. We obtain a map
$\mtil^{M}_i \colon SM \tensor TSE \to SM \tensor TSE[i,1-i]$ whose
component mapping $SM \tensor SE^{\tensor j-1}$ to $SM \tensor SE^{\tensor
  q-1}[i,1-i]$ is $\mtil^{M,q}_{ij}$.

\begin{lemma}\label{lem:vari_mod_rel}
  Given maps $\mtil^{M,1}_{ij}$ as above, the following are equivalent:
\begin{enumerate}[(i)]
\item $\sum_{i+p=u} (-1)^{i} \mtil^{M}_{i} \mtil^{M}_{p} = 0$ for all $u
  \geq 0$.
\item $\sum_{i+p=u} (-1)^{i} \mtil^{M,1}_{i} \mtil^{M}_{p} = 0$ for all $u
  \geq 0$.
\item The $\mtil^{M}_i$ turn $SM \tensor TSE$ into a twisted chain complex.
\end{enumerate}
\end{lemma}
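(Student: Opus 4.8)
The plan is to mirror the proof of Lemma \ref{lem:vari_obj_rel}, replacing the cofree tensor coalgebra $\Tbar SE$ by the cofree $TSE$-comodule $SM \tensor TSE$. The equivalence of (i) and (iii) is immediate from Definition \ref{def:tch}: by construction $\mtil^M_i$ is a map $SM \tensor TSE \to SM \tensor TSE[i,1-i]$, so the bidegree requirement for a twisted-chain-complex differential holds automatically, and the relations \ref{equ:tch_obj_rel} for the family $(\mtil^M_i)_{i \geq 0}$ on $SM \tensor TSE$ are precisely the equations in (i). For (i) $\Rightarrow$ (ii) I would compose the identity in (i) with the projection $p_0 \colon SM \tensor TSE \to SM \tensor SE^{\tensor 0} = SM$; since the very definition of $\mtil^{M,q}_{ij}$ gives $p_0 \mtil^M_p = \mtil^{M,1}_p$ (the corestriction of a comodule coderivation is the map it extends), this turns the identity in (i) into the identity in (ii).

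The content is in (ii) $\Rightarrow$ (i). First I would decompose $\mtil^M_i = L_i + (\id_{SM} \tensor \mtil^E_i)$, where $\mtil^E_i$ is the coderivation of $TSE$ corresponding to $\mtil^{E,1}_i$ as in Lemma \ref{lem:vari_obj_rel} (extended by zero on the length-zero summand) and $L_i \colon SM \tensor TSE \to SM \tensor TSE[i,1-i]$ has component $\mtil^{M,1}_{i,j-q+1} \tensor \id^{\tensor q-1}$ from $SM \tensor SE^{\tensor j-1}$ to $SM \tensor SE^{\tensor q-1}$, i.e. $L_i$ is the canonical extension of $\mtil^{M,1}_i$ to a morphism of $TSE$-comodules. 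Expanding $\sum_{i+p=u} (-1)^i \mtil^M_i \mtil^M_p$ and collecting terms, the part carried by the tensor factor $TSE$ alone is $\id_{SM} \tensor \bigl( \sum_{i+p=u} (-1)^i \mtil^E_i \mtil^E_p \bigr)$, which vanishes by Lemma \ref{lem:vari_obj_rel}(ii) since $E$ is a \dainf{}-algebra; the remaining terms, each involving at least one $L$, modify only a connected prefix (including the $SM$-slot) or a connected block of $SE$-slots of a tensor word, hence assemble into a morphism of $SM \tensor TSE$ that is a comodule coderivation over the coderivation $\sum_{i+p=u} (-1)^i \mtil^E_i \mtil^E_p = 0$ of $TSE$. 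By cofreeness of $SM \tensor TSE$ over $TSE$, such a morphism is determined by $p_0$ applied to it, which equals $\sum_{i+p=u} (-1)^i \mtil^{M,1}_i \mtil^M_p$ and is zero by hypothesis (ii). Therefore $\sum_{i+p=u} (-1)^i \mtil^M_i \mtil^M_p = 0$, which is (i).

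I expect the main obstacle to be precisely this last point: checking that the terms in the expansion of $\sum_{i+p=u} (-1)^i \mtil^M_i \mtil^M_p$ that involve an $L$ do form a comodule coderivation compatible with $\sum_{i+p=u} (-1)^i \mtil^E_i \mtil^E_p$, so that cofreeness applies. This is the comodule analogue of the fact that a signed composite of coderivations on a cofree coalgebra is again governed by its corestriction, and it is proved by the same tensor-word bookkeeping that underlies the sign identity displayed in the proof of Lemma \ref{lem:vari_obj_rel}; the only genuinely delicate part is keeping track of the Koszul signs when two of the elementary operations $\mtil^{M,1}_{\bullet}$ and $\mtil^{E,1}_{\bullet}$ act on disjoint or on nested blocks. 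Everything else is routine.
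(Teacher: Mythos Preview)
The paper states this lemma without proof; like Lemma~\ref{lem:vari_map_rel} and Lemma~\ref{lem:vari_modmap_rel}, it is left to the reader as the module analogue of Lemma~\ref{lem:vari_obj_rel} (whose proof in the paper is just the one-line sign identity). Your argument is correct and supplies what the paper omits.

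One small repackaging makes the (ii)$\Rightarrow$(i) step cleaner than the $L_i+(\id_{SM}\tensor\mtil^E_i)$ decomposition. Each $\mtil^M_i$ is by construction a comodule coderivation over $\mtil^E_i$ for the right $TSE$-coaction $\delta$ on $SM\tensor TSE$, i.e.\ $\delta\,\mtil^M_i=(\mtil^M_i\tensor\eins+\eins\tensor\mtil^E_i)\,\delta$. Writing $\Phi_u=\sum_{i+p=u}(-1)^i\mtil^M_i\mtil^M_p$ and expanding $\delta\,\Phi_u$, the term $\eins\tensor\sum_{i+p=u}(-1)^i\mtil^E_i\mtil^E_p$ vanishes by Lemma~\ref{lem:vari_obj_rel}(ii), and the two cross terms cancel after the substitution $i\leftrightarrow p$ (the Koszul sign $(-1)^{\sclprod{\mtil^M_p}{\mtil^E_i}}=(-1)^{1+u}$ is exactly what is needed). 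Hence $\delta\,\Phi_u=(\Phi_u\tensor\eins)\,\delta$, so $\Phi_u$ is a $TSE$-comodule endomorphism of the cofree comodule $SM\tensor TSE$ and is therefore zero once its corestriction $p_0\Phi_u=\sum(-1)^i\mtil^{M,1}_i\mtil^M_p$ is. This is exactly your cofreeness argument; the point is only that the cross terms cancel by the $i\leftrightarrow p$ symmetry rather than by invoking $\sum(-1)^i\mtil^E_i\mtil^E_p=0$, so ``comodule coderivation over the zero coderivation'' is true but for a slightly different reason than you indicate.
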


\begin{definition}\label{def:modules}
  A module over a \dainfalg $E$ is a bigraded $k$-module $M$ together with
  maps $\mtil^{M,1}_{ij} \colon SM \tensor SE^{\tensor j-1} \to SM[i,1-i]$
  for $i \geq 0$ and $j\geq 1$ satisfying the equivalent conditions of the
  last lemma and the unit condition
  \begin{equation} \label{equ:mod_unitcondition} \begin{split}
      &\mtil^{M,1}_{02}(\eins \tensor \sigma_E^{-1} \eta_E) = \eins
      \quad \text{ and } \\
      &\mtil^{M,j}_{ij}(\eins \tensor \eins^{\tensor r-1} \tensor
      \sigma^{-1}_E \eta_E \tensor \eins^{\tensor j-1-r}) = 0 \text{ if } i+j
      \geq 3 \text{ with } 1 \leq r \leq j-1.
    \end{split}
  \end{equation} 
\end{definition}

\begin{example}\label{ex:free_modul}
  The free module of rank $1$ is an $E$-module. For this, the isomorphism
  $\Tbar SE \iso SE \tensor TSE$ is used to interpret the structure maps of
  $E$ as an \dainfalg as those of $E$ as an $E$-module.
\end{example}

Let $f \colon E \to F$ be a \dainfalg map and let $M$ be an $F$-module.
Writing $\mtil^{M,F}_i$ for the $F$-module structure maps of $M$, we define
\[ \mtil^{M,E,1}_{uv} =  \hspace{-1.5ex} \sum_{\substack{i+p=u;\, 1 \leq j \leq v}} \hspace{-1.5ex}
\mtil^{M,F,1}_{ij}(\eins \tensor \ftil^{j-1}_{p,v-1}) \colon SM \tensor
(SE)^{\tensor v-1} \to SM[u,1-u] \]

\begin{lemma}\label{lem:induced_modstr}
  The $\mtil^{M,E}_u$ define an $E$-module structure on $M$.
\end{lemma}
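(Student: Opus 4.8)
The plan is to verify that the maps $\mtil^{M,E}_u$ satisfy condition (i) of Lemma~\ref{lem:vari_mod_rel}, i.e.\ $\sum_{i+p=u}(-1)^i \mtil^{M,E}_i \mtil^{M,E}_p = 0$ for all $u \geq 0$, together with the unit condition \eqref{equ:mod_unitcondition}. The conceptual approach is to package the identity in coalgebra language, exactly as the map relations for \dainfalgs were packaged in Lemma~\ref{lem:vari_map_rel}: the map $\ftil \colon \Tbar SE \to \Tbar SF$ assembled from the $\ftil_p$ is a morphism of twisted chain complexes by Lemma~\ref{lem:vari_map_rel}, and the definition of $\mtil^{M,E,1}_{uv}$ says precisely that $\mtil^{M,E,1}_i = \sum_{i+p=u} \mtil^{M,F,1}_p(\eins_{SM} \tensor \ftil_i)$ in the appropriate indexing; so the induced module structure map $\mtil^{M,E}$ on $SM \tensor TSE$ is obtained by restricting the $F$-module structure map $\mtil^{M,F}$ along $\eins_{SM} \tensor \ftil \colon SM \tensor TSE \to SM \tensor TSF$.

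First I would make this restriction-along-$\ftil$ statement precise at the level of the big assembled maps on $SM \tensor TSE$: one checks that $\mtil^{M,E}_u = \sum_{i+p=u}(\eins_{SM}\tensor\ftil_i)\circ\mtil^{M,F}_p(\eins_{SM}\tensor\ftil^{\bullet})$ — more carefully, that if one writes $\Phi_i = $ the component of $\eins_{SM}\tensor\ftil$ raising horizontal degree by $i$, then $\mtil^{M,E}_u = \sum_{i+p=u} \mtil^{M,F}_i \circ \Phi_p$ as maps $SM\tensor TSE \to SM \tensor TSE[u,1-u]$, using that $\ftil$ is multiplicative/comultiplicative on $TSE$ so that the two summands in the definition of $\mtil^{M,q}_{ij}$ (the one hitting $SM$ and the one hitting the $TSE$-factor via $\mtil^E$) are each compatible with $\ftil$. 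This is the place where the combinatorics of the sign $\sigma$ from \eqref{equ:map_sign} and the Koszul rule must be tracked; I expect this bookkeeping to be the main obstacle, but it is the same bookkeeping already carried out for Lemma~\ref{lem:vari_map_rel}, so it should reduce to citing that lemma rather than redoing it.

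Granting that, the computation of $\sum_{i+p=u}(-1)^i \mtil^{M,E}_i \mtil^{M,E}_p$ becomes a telescoping manipulation: substitute the expression $\mtil^{M,E} = \mtil^{M,F}\circ\Phi$, expand, and use (a) that $\mtil^{M,F}$ satisfies condition (i) of Lemma~\ref{lem:vari_mod_rel} since $M$ is an $F$-module, and (b) that $\ftil$ (hence $\Phi$) is a map of twisted chain complexes, i.e.\ $\sum (-1)^i \Phi_i \mtil^{M,F}_{\ldots} = \sum \mtil^{M,F}_{\ldots}\Phi$ in the relevant sense — this is formula (ii)/(iii) of Lemma~\ref{lem:vari_map_rel} applied after tensoring with $\eins_{SM}$ and using Lemma~\ref{lem:vari_mod_rel} to transport it to the module setting. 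Combining (a) and (b), all the cross terms cancel and one is left with $0$, exactly as in the standard proof that precomposing a twisted-chain-complex differential structure along a twisted-chain map again gives a (twisted chain) differential.

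Finally I would check the unit condition \eqref{equ:mod_unitcondition} for the $\mtil^{M,E}_u$. The relation $\mtil^{M,E,1}_{02} = \mtil^{M,F,1}_{02}(\eins \tensor \ftil^0_{01})$ together with $\ftil_{01}\eta_E = \eta_F$ (the unit condition \eqref{equ:mor_unitcondition} for $f$) and $\mtil^{M,F,1}_{02}(\eins \tensor \sigma_F^{-1}\eta_F) = \eins$ gives $\mtil^{M,E,1}_{02}(\eins\tensor\sigma_E^{-1}\eta_E) = \eins$; and for $i+j\geq 3$ the vanishing $\mtil^{M,E,j}_{ij}(\eins\tensor\eins^{\tensor r-1}\tensor\sigma_E^{-1}\eta_E\tensor\eins^{\tensor j-1-r}) = 0$ follows by plugging $\sigma_E^{-1}\eta_E$ into one of the $\ftil^1_{pq}$ factors or into $\mtil^{E}$, each of which annihilates it by \eqref{equ:mor_unitcondition} and \eqref{equ:obj_mod_unitcondition} respectively — a routine term-by-term check. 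Hence the $\mtil^{M,E}_u$ satisfy all the conditions of Definition~\ref{def:modules}, which is the assertion.
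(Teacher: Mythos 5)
The paper states this lemma without proof, so there is no official argument to compare against; your strategy --- verify condition (ii) of Lemma~\ref{lem:vari_mod_rel} by exhibiting the $E$-module structure as the restriction of the $F$-module structure along the comodule map $\Phi = \eins_{SM}\tensor\ftil \colon SM\tensor TSE \to SM\tensor TSF$, then telescope against the $F$-module relation --- is surely the intended one, and your term-by-term check of the unit condition \eqref{equ:mod_unitcondition} is fine (modulo the typo $\ftil^0_{01}$ for $\ftil^1_{01}$).

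However, the displayed identity at the heart of your argument is not well-typed and needs repair: you assert $\mtil^{M,E}_u = \sum_{i+p=u}\mtil^{M,F}_i\circ\Phi_p$ ``as maps $SM\tensor TSE\to SM\tensor TSE[u,1-u]$'', but the right-hand side lands in $SM\tensor TSF$. What is actually true, and what the telescoping needs, are two separate facts: the corestriction identity $\mtil^{M,E,1}_u=\sum_{i+p=u}\mtil^{M,F,1}_i\Phi_p$ (which is the definition), and the intertwining relation $\sum_{i+p=u}(-1)^i\Phi_i\,\mtil^{M,E}_p=\sum_{i+p=u}\mtil^{M,F}_i\Phi_p$ as maps $SM\tensor TSE\to SM\tensor TSF$. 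Granting the latter, your computation does close up: $\sum_{i+p=u}(-1)^i\mtil^{M,E,1}_i\mtil^{M,E}_p=\sum_a(-1)^a\mtil^{M,F,1}_a\bigl(\sum_{b+p=u-a}(-1)^b\Phi_b\mtil^{M,E}_p\bigr)=\sum_{a+b+p=u}(-1)^a\mtil^{M,F,1}_a\mtil^{M,F}_b\Phi_p=0$ by Lemma~\ref{lem:vari_mod_rel}(ii) for the $F$-structure. The remaining gap is your claim that the intertwining relation ``reduces to citing Lemma~\ref{lem:vari_map_rel} after tensoring with $\eins_{SM}$''. It does not quite: that lemma only governs the summands $\eins\tensor\mtil^{E,q-1}_{i,j-1}$ of $\mtil^{M,q}_{ij}$ acting on the $TSE$-tail, whereas the summands $\mtil^{M,1}_{i,j-q+1}\tensor\eins^{\tensor q-1}$ acting on the $SM$-slot are new and must be matched against the corresponding components of $\mtil^{M,F}_b\Phi_p$ using the defining formula for $\mtil^{M,E,1}$. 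You must either do this by a direct component computation with the Koszul signs, or observe that both sides of the intertwining relation are coderivations along the comodule morphism $\Phi$ (for the deconcatenation comodule structures) and therefore coincide because their corestrictions to $SM$ agree --- the same cofreeness principle behind the equivalences (i)--(iii) in Lemmas~\ref{lem:vari_mod_rel} and~\ref{lem:vari_map_rel}. This is routine, but it is genuine additional bookkeeping rather than a citation, and as written your proof leaves it unverified.
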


\begin{example}\label{ex:induced_modstr} 
  If $f \colon E \to F$ is a map of \dainf{}-algebras, $F$ is an $E$-module.
\end{example}

Let $E$ be a \dainfalg and let $M$ and $N$ be $E$-modules.  Given a family
of $k$-linear maps $\ftil^1_{ij} \colon SM \tensor SE^{\tensor j-1} \to
SN[i,-i]$ with $i\geq 0$ and $j\geq 1$, they induce a map $\ftil^1_i \colon
SM \tensor TSE \to SM[i,-i]$. Setting $\ftil^q_{ij} = \ftil^1_{i,j-q+1}
\tensor \eins^{\tensor q-1}$ for $1 \leq q \leq j$, we obtain maps $\ftil_i
\colon SM \tensor TSE \to SN \tensor TSE[i,-i]$ whose component mapping $SM
\tensor SE^{\tensor j-1}$ to $SN \tensor SE^{\tensor q-1}[i,-i]$ is
$\ftil^q_{ij}$.

\begin{lemma}\label{lem:vari_modmap_rel}
  Given maps $\ftil^1_{ij}$ as above, the following are equivalent:
\begin{enumerate}[(i)]
\item $\sum_{i+p=u} (-1)^{i} \ftil_i \mtil^M_p = \sum_{i+p=u} \mtil^N_i
  \ftil_p$ for all $u \geq 0$.
\item $\sum_{i+p=u} (-1)^{i} \ftil_i^1 \mtil^M_p = \sum_{i+p=u}
  \mtil^{N,1}_i \ftil_p$ for all $u \geq 0$.
\item The $\ftil_i$ form a map of twisted chain complexes.
\end{enumerate}
\end{lemma}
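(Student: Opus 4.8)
The plan is to mimic the proofs of Lemmas~\ref{lem:vari_obj_rel}, \ref{lem:vari_map_rel}, \ref{lem:vari_mod_rel}, and \ref{lem:vari_modmap_rel}'s immediate predecessor Lemma~\ref{lem:vari_modmap_rel} itself—that is, to exploit exactly the same formal pattern that has now been established several times: a family of ``level-1'' maps on $SM\tensor TSE$ extends canonically to a family of maps of the tensor coalgebra $SM\tensor TSE$, and a relation involving the full maps is equivalent to the relation involving only their level-1 components, which in turn is by definition the statement that the maps assemble to a morphism of twisted chain complexes. First I would recall that $d^M_i=\mtil^M_i$ and $d^N_i=\mtil^N_i$ are the differentials making $SM\tensor TSE$ and $SN\tensor TSE$ into twisted chain complexes (Lemma~\ref{lem:vari_mod_rel}), and that $d_i$ lowers nothing in the $T SE$-factor only by comultiplication bookkeeping, so that $\ftil_i\mtil^M_p$ and $\mtil^N_i\ftil_p$ are well-defined maps $SM\tensor TSE\to SN\tensor TSE[i+p,-(i+p)]$. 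With this notation, condition~(iii) is literally \ref{equ:tch_map_rel} for the family $\ftil_i$, so (i)$\iff$(iii) is a tautology once one checks that the differentials referred to in~(iii) are indeed the $\mtil^M_i$, $\mtil^N_i$; this is exactly the content of~(i).

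The substantive point is therefore the equivalence (i)$\iff$(ii): that the equation $\sum_{i+p=u}(-1)^i\ftil_i\mtil^M_p=\sum_{i+p=u}\mtil^N_i\ftil_p$ holds if and only if its composite with the projection $SN\tensor TSE\to SN$ (equivalently, its level-1 component $\ftil^1$) holds. One direction is trivial: composing (i) with the projection to $SN$ gives (ii). For the converse I would argue by the coalgebra structure. The key structural fact is that $\ftil_i$, viewed as a map of bigraded modules $SM\tensor TSE\to SN\tensor TSE$, is a morphism of right $TSE$-comodules (or can be recovered from its ``principal'' component $\ftil^1_i$ together with the comultiplication), precisely because of the defining formula $\ftil^q_{ij}=\ftil^1_{i,j-q+1}\tensor\eins^{\tensor q-1}$; similarly $\mtil^M_p$ and $\mtil^N_i$ are the coderivation-type maps determined by their level-1 pieces. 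Both sides of (i) are then maps that are compatible with the $TSE$-comodule structure in the appropriate twisted sense, so two such maps agree as soon as their compositions with the projection to the ``cogenerators'' $SN$ agree. Spelling this out: one decomposes each side according to the output tensor-degree $q-1$ in $SN\tensor SE^{\tensor q-1}$, and checks that the degree-$(q-1)$ component of each side is obtained from its degree-$0$ (i.e. level-1) component by tensoring on an identity $\eins^{\tensor q-1}$ and possibly acting by $\mtil^E$ on the $TSE$-factor; since these $TSE$-tail contributions are identical on both sides of the equation (they only involve $\mtil^E$ and the comultiplication, not $f$, $\mtil^M$, or $\mtil^N$ in an asymmetric way), equality of the level-1 components forces equality of all components. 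This is the same ``it suffices to check on the cogenerators'' argument used implicitly in the preceding four lemmas.

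The main obstacle I anticipate is purely bookkeeping rather than conceptual: one must verify carefully that the tail terms agree on the two sides of (i), i.e.\ that when $\ftil_i\mtil^M_p$ and $\mtil^N_i\ftil_p$ are expanded, the pieces acting on the $SE^{\tensor}$-factor via $\mtil^E$ and the comultiplication are the same, so that the discrepancy between the two sides is concentrated in the $SM$-to-$SN$ part and is governed entirely by the level-1 equation~(ii). Because the maps $\mtil^{M,q}_{ij}$ and $\mtil^{M,E,1}_{uv}$ were defined (in Lemmas~\ref{lem:vari_mod_rel} and~\ref{lem:induced_modstr}) precisely so that the $E$-structure on the $TSE$-tail is the ambient $\mtil^E$, this matching is automatic, but I would want to write out the two relevant summations once to be sure the Koszul signs $(-1)^i$ and the splitting $i+p=u$ line up. No hard estimate or new idea is needed; the lemma is the module-map analogue of Lemma~\ref{lem:vari_map_rel}, and the proof is a transcription of that argument with $\Tbar SE$ replaced by $SM\tensor TSE$ throughout.
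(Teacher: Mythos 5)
Your plan is correct and is exactly the argument the paper leaves implicit here (as for Lemmas \ref{lem:vari_obj_rel}, \ref{lem:vari_map_rel}, and \ref{lem:vari_mod_rel}): (i)$\Leftrightarrow$(iii) is definitional once $SM\tensor TSE$ and $SN\tensor TSE$ carry the twisted differentials $\mtil^M_i$, $\mtil^N_i$, and (i)$\Leftrightarrow$(ii) follows from cofreeness of $SN\tensor TSE$ as a $TSE$-comodule, i.e.\ by checking components against the projection to $SN$. The one sign you flag does work out: in the component landing in $SN\tensor SE^{\tensor q-1}$, the tail term of the right-hand side acquires the Koszul sign $(-1)^{\sclprod{\ftil^1_{p,\cdot}}{\mtil^{E,q-1}_{i,\cdot}}}=(-1)^p$ from the interchange, which after swapping $i\leftrightarrow p$ matches the $(-1)^i$ on the left, so both tails agree and the discrepancy is exactly the level-$1$ equation tensored with $\eins^{\tensor q-1}$.
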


\begin{definition}\label{def:modulemaps}
  Let $E$ be a \dainf{}-algebra, and let $M$ and $N$ be $E$-modules. An
  $E$-module map from $E$ to $F$ is a family of maps $\ftil^1_{ij} \colon
  SM \tensor SE^{\tensor j-1} \to SN[i,-i]$ for $i \geq 0$ and $j \geq 1$
  satisfying the equivalent conditions of the last lemma and
\[
\ftil_{ij}^1 (\eins \tensor \eins^{\tensor r-1} \tensor \sigma_E^{-1}
\eta_E \tensor \eins^{\tensor j-1-r}) = 0 \text{ if } j \geq 2 \text{ with
} 1 \leq r \leq j-1
\]
Composition of module maps is given by the composition of the $\ftil_i$ as
maps of twisted chain complexes.
\end{definition}

A map $f\colon M \to N$ of $E$-modules induces a map of twisted chain
complexes between the underlying twisted chain complexes of $M$ and $N$.
The map $f$ is an $E_2$-equivalence if this underlying map is.

\begin{example}\label{ex:module_maps}
  If $f\colon E \to F$ is a map of \dainf{}-algebras, both $E$ and $F$ are
  $E$-modules, and one can check that $f$ is an $E$-module map. The
  underlying twisted chain complexes of $E$ and $F$ as \dainfalgs and
  modules coincide. Hence $f$ is an $E_2$-equivalence of modules if and
  only if it is an $E_2$-equivalence of \dainf{}-algebras.
\end{example}

Let $E$ be a \dainfalg and let $M$ and $N$ be $E$-modules. Let $X$ be the
bigraded $k$-module $\Hom_k(SM \tensor TSE, SN)$. For $s \in \mN, t \in
\mZ$, an element of $X_{st}$ is a $k$-module map $\gtil^1 \colon SM \tensor
TSE[s,t] \to SN$.  We shift $X$ by shifting the second entry $SN$. As in
the definition of module maps, the $\gtil^1$ has components $\gtil^1_j$
which provide $\gtil^q_j = \gtil^1_{j-q+1}\tensor \eins^{\tensor q-1}$ and
$\gtil \colon SM \tensor TSE[s,t] \to SN \tensor TSE$, and giving $\gtil^1$
is equivalent to giving the $\gtil$.

For $i \geq 0$, we define $d_i \colon X \to X[i,1-i]$ by $d_i(\gtil^1) =
\mtil^{N,1}_{i} \gtil - (-1)^{\sclprod{\gtil}{\mtil_i}} \gtil^1
\mtil^{M}_i$. Since
\[\sum_{i+p=u} (-1)^p d_i d_p (\gtil^1) = 0 \] for $u \geq 0$,
we may state
\begin{definition}\label{def:twcx_enr}
Let $E$ be a \dainfalg and let $M$ and $N$ be $E$-modules. We define 
$\enrhom_E(M,N)$ to be the twisted chain complex with underlying bigraded
$k$-module $\Hom_k(SM \tensor TSE, SN)$ and differentials $d_i$ as defined
above.
\end{definition} 

Let $L, M$ and $N$ are $E$-modules. For $\ftil \in \enrhom_E(M,N)_{st}$ and
$\gtil \in \enrhom_E(L,M)_{pq}$, its composite $\ftil \gtil \in
\enrhom_E(L,M)_{p+s,q+t}$ is defined. The unit $k \to \enrhom_E(M,M)$ sends
$1_k$ to the $\gtil \in \enrhom_E(M,M)$ with $\gtil^1_1 = \id_{SM}$ and
$\gtil^1_j = 0$ for $j\geq 2$.

\begin{proposition}\label{prop:end_is_tdga}
  The composition induces a pairing of twisted chain complexes
  \[ \enrhom_E(M,N) \tensor \enrhom_E(L,M) \to \enrhom_E(L,N) \] which is
  associative and unital. In particular, $\enrhom_E(M,M)$ is a tdga.
\end{proposition}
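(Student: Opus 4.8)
The plan is to reduce the whole statement to the associativity and bilinearity of composition of $k$-linear maps, via the reformulation given just before Definition~\ref{def:twcx_enr}. In that reformulation, an element of $\enrhom_E(M,N)_{st}$ is a $k$-linear map $\gtil \colon SM \tensor TSE \to SN \tensor TSE$ of bidegree $(s,t)$ whose component into $SN \tensor SE^{\tensor q-1}$ is $\gtil^1_{j-q+1} \tensor \eins^{\tensor q-1}$, i.e.\ a map acting through $\gtil^1$ on an initial block of tensor factors and carrying the remaining factors along the identity; composition of $\enrhom$-elements is then ordinary composition of $k$-linear maps, the distinguished element of $\enrhom_E(M,M)$ is $\id_{SM \tensor TSE}$, the $E$-module structure maps of $M$ assemble by Lemma~\ref{lem:vari_mod_rel} into bidegree-$(i,1-i)$ endomorphisms $\mtil^M_i$ of $SM \tensor TSE$, and the differential on $\enrhom_E(M,N)$ is $d_i(\gtil) = \mtil^N_i \gtil - (-1)^{\sclprod{\gtil}{\mtil_i}} \gtil \mtil^M_i$. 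The first (routine) step is to check that this picture is internally consistent: the composite of two maps of the ``trailing factors carried along'' shape is again of that shape, so for $\ftil \in \enrhom_E(M,N)$ and $\gtil \in \enrhom_E(L,M)$ the composite $\ftil\gtil$ is indeed an element of $\enrhom_E(L,N)$, with $(\ftil\gtil)^1 = \ftil^1 \gtil$ and with bidegrees adding. (That the $d_i$ make $\enrhom_E(M,N)$ into a twisted chain complex at all is the relation recorded just before Definition~\ref{def:twcx_enr}; it is proved by the same sort of sign computation as in the next step, with $L$ specialized to the ground ring.)

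Next I would show that composition is a pairing of twisted chain complexes: writing $\mu \colon \enrhom_E(M,N) \tensor \enrhom_E(L,M) \to \enrhom_E(L,N)$, $\ftil \tensor \gtil \mapsto \ftil\gtil$, for the (automatically well defined, $k$-bilinear) composition map, one must check that $\mu$ commutes strictly with every differential $d_u$. For degree reasons the differential on a tensor product of twisted chain complexes has no ``cross'' terms, so it is the Leibniz-type operator $d_u(\ftil \tensor \gtil) = (d_u\ftil) \tensor \gtil + (-1)^{\sclprod{\ftil}{\mtil_u}}\, \ftil \tensor (d_u\gtil)$, the sign being the Koszul sign for commuting a bidegree-$(u,1-u)$ operator past $\ftil$. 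Applying $\mu$ and expanding $d_u\ftil$ and $d_u\gtil$ by their definitions produces, besides the expected terms $\mtil^N_u(\ftil\gtil)$ and $-(-1)^{\sclprod{\ftil\gtil}{\mtil_u}}(\ftil\gtil)\mtil^L_u$ (here using that $\sclprod{-}{-}$ is additive in its first slot), the two ``middle'' terms $\pm\, \ftil \mtil^M_u \gtil$, which cancel precisely because the sign $(-1)^{\sclprod{\cdot}{\mtil_u}}$ built into $d_u$ is exactly the Koszul sign appearing in the tensor-product differential. What remains is $d_u(\ftil\gtil) = d_u\mu(\ftil \tensor \gtil)$. \textbf{This sign check --- verifying that the exponent $\sclprod{\gtil}{\mtil_i}$ chosen in the definition of $d_i$ is precisely the one forced by the symmetric monoidal structure of $\tChxes_k$ --- is the single genuine point of the argument; everything else is formal.}

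Finally, associativity $(\ftil\gtil)\htil = \ftil(\gtil\htil)$ is immediate, since it is just the associativity of composition of $k$-linear maps --- no tensor factors are reordered, so no signs intervene --- and all three composites are again of the ``trailing factors carried along'' shape. For unitality, the distinguished element $e \in \enrhom_E(M,M)$ equals $\id_{SM \tensor TSE}$, which is a two-sided unit for composition, and $d_i(e) = \mtil^M_i - (-1)^0 \mtil^M_i = 0$ for every $i$ because $e$ has bidegree $(0,0)$; hence $1_k \mapsto e$ defines a morphism of twisted chain complexes from the monoidal unit of $\tChxes_k$ --- namely $k$ placed in bidegree $(0,0)$ with trivial differentials --- to $\enrhom_E(M,M)$, and it is a two-sided unit for $\mu$. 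Consequently $\enrhom_E(M,M)$, equipped with the pairing $\mu$ and this unit, is a monoid in $(\tChxes_k, \tensor)$, and since monoids in $\tChxes_k$ are exactly tdgas (the remark following Definition~\ref{def:tch}), $\enrhom_E(M,M)$ is a tdga. This is the proposition.
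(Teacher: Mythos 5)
Your proposal is correct and follows essentially the same route as the paper: the paper's proof likewise treats associativity and unitality as formal consequences of composition of $k$-linear maps and identifies the Leibniz rule as the only substantive point, verifying it by the same expansion in which the two middle terms $\pm\,\ftil\,\mtil^M_i\,\gtil$ cancel because the sign in the definition of $d_i$ matches the Koszul sign in the tensor-product differential. The extra details you spell out (closure of the ``trailing factors carried along'' shape under composition, $d_i(e)=0$ for the unit) are exactly the parts the paper dismisses as routine.
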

\begin{proof}
  Let $\mu$ be the composition. The only possibly non-trivial part is to
  verify the Leibniz-rules. If $i \geq 0$, $\ftil \in \enrhom_E(M,N)_{st}$
  and $\gtil \in \enrhom_E(L,M)_{pq}$, we have
\[ \begin{split}
&\mu (\eins \tensor d_i + d_i \tensor \eins) (\ftil \tensor \gtil) \\
=& (-1)^{\langle d_i,\ftil \rangle} \ftil \mtil^M_i \gtil 
- (-1)^{\langle d_i,\ftil \rangle + \langle \gtil, \mtil_i \rangle} \ftil
\gtil \mtil^L_i
+ \mtil^N_i \ftil \gtil 
- (-1)^{\langle \ftil, \mtil_i \rangle} \ftil \mtil^M_i \gtil \\
=&d_i \mu (\ftil \tensor \gtil).
\end{split}\]
\end{proof}

\begin{remark} \label{rem:enrichment_pitfall} The last proposition says
  that the category of modules over a \dainfalg is enriched in twisted
  chain complexes. The morphisms of the ordinary category underlying this
  $\tChxes_k$-category are not the \dainfalg module morphisms of Definition
  \ref{def:modulemaps}. Unlike the case of modules over
  $A_{\infty}$-algebras and their enrichment in chain complexes, not even
  all components $f_{ij}$ of an $E$-module map $f$ are $0$-cycles in the
  $\enrhom_E(M,N)$. The reason is that for $i \geq 1$, the $f_{ij}$ would
  have to lie in a negative degree with respect to the first grading. We
  resist from allowing a negative grading there, since this would for
  example cause problems when forming total complexes. Therefore, the
  notation $\enrhom$ might be slightly misleading, but we keep it in lack
  of a better alternative. Moreover, the induced maps on $\enrhom_E(M,N)$
  to be defined next are not a consequence of the enrichment.
\end{remark}

Given $E$-module maps $f \colon M \to M'$ and $h \colon N \to N'$, we
define
\[
f^*_i \colon \enrhom_E(M',N) \to \enrhom_E(M,N)[i,-i], \quad 
(f_i^* (\gtil^1))^1_v = 
\sum_{1 \leq j \leq v}(-1)^{\sclprod{\ftil_i}{\gtil}}
\gtil^1_j \ftil^j_{iv} 
\]
and
\[
(h_*)_i \colon \enrhom_E(M,N') \to \enrhom_E(M,N)[i,-i], \quad 
((h_*)_i(\gtil^1))^1_v = 
\sum_{1 \leq j \leq v}
\htil^1_{ij}\gtil^j_v. 
\]
For degree reasons, $f^*_i$ and $(h_*)_i$ are only defined on $\gtil$
in horizontal degree $\geq i$.

\begin{lemma} \label{lem:induced_maps}
Both $f^*$ and $h_*$ are maps of twisted chain complexes. If $M' = N$, then
$f^*$ is a left $\enrhom_E(N,N)$-module map. If $N'=M$, then $h_*$ is a
right $\enrhom_E(M,M)$-module map. 
\end{lemma}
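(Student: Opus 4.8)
The plan is to reinterpret $f^*$ and $h_*$ as, respectively, precomposition with $f$ and postcomposition with $h$, after which both assertions become formal consequences of the associativity of composition together with the defining relations of module maps.

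First I would make explicit the translation used throughout Section~\ref{sec:anti}: an element of $\enrhom_E(M,N)_{st}$ is the same datum as a map $\gtil \colon SM \tensor TSE[s,t] \to SN \tensor TSE$, and under this identification the differential $d_i$ of $\enrhom_E(M,N)$ sends $\gtil$ to $\mtil^{N}_{i}\gtil - (-1)^{\sclprod{\gtil}{\mtil_i}}\gtil\,\mtil^{M}_{i}$; that is, $d_i$ is the graded commutator with the ambient twisted chain structures $\mtil^{M}_\bullet$ on $SM\tensor TSE$ and $\mtil^{N}_\bullet$ on $SN\tensor TSE$. By Lemma~\ref{lem:vari_modmap_rel}, a module map $f$ is precisely a map of twisted chain complexes $\ftil_\bullet$ between the relevant $S?\tensor TSE$, and a direct comparison of the component formulas shows that, under the identification above, $f^*_i$ is the assignment $\gtil\mapsto (-1)^{\sclprod{\ftil_i}{\gtil}}\gtil\,\ftil_i$ and $(h_*)_i$ is $\gtil\mapsto\htil_i\,\gtil$. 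In other words $f^*$ is precomposition with $f$ and $h_*$ is postcomposition with $h$, each organized according to horizontal degree.

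With this in hand, verifying that $f^*$ is a map of twisted chain complexes amounts to the identity $\sum_{i+p=u}(-1)^i f^*_i d_p = \sum_{i+p=u}d_i f^*_p$ of \eqref{equ:tch_map_rel}. Expanding both sides through the commutator description of $d$, the terms containing $\mtil^{N}_\bullet$ cancel because the index set $\{i+p=u\}$ is symmetric and the Koszul signs line up, while the surviving terms — carrying the target-module structure of $f$ on the left and its source-module structure on the right — agree precisely by the twisted-chain-map relation for $\ftil_\bullet$. The argument for $h_*$ is the mirror image, using the twisted-chain-map relation for $\htil_\bullet$. For the module-map statements: when $M'=N$ both the source $\enrhom_E(N,N)$ and the target $\enrhom_E(M,N)$ carry the left $\enrhom_E(N,N)$-action by composition, and since $f^*$ is precomposition with $f$, associativity of composition (Proposition~\ref{prop:end_is_tdga}) gives $f^*(\psi\gtil)=\psi\,\gtil\,\ftil=\psi\,f^*(\gtil)$, so $f^*$ is left $\enrhom_E(N,N)$-linear; the case $N'=M$ is the dual statement for $h_*$, with postcomposition and the right $\enrhom_E(M,M)$-action.

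The one step requiring genuine care is sign bookkeeping: one must confirm that the Koszul signs $(-1)^{\sclprod{\ftil_i}{\gtil}}$ and $(-1)^{\sclprod{\gtil}{\mtil_i}}$ built into $f^*$, $h_*$ and $d_i$ combine correctly with the $(-1)^i$ in \eqref{equ:tch_map_rel}, and that the fact that $f^*_i$ and $(h_*)_i$ are defined only on horizontal degrees $\geq i$ causes no trouble (the missing summands vanishing on both sides of the identity). Once the commutator picture is set up this is routine, but it is the part that must be checked carefully.
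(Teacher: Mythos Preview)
Your proposal is correct and is essentially the approach of the paper: the paper verifies the twisted chain map identity for $f^*$ by writing out a single formula equal to both $\sum (-1)^i f^*_i d_p(\gtil)$ and $\sum d_p f^*_i(\gtil)$, does the analogous check for $h_*$, and deduces the module statements from associativity of composition. Your reformulation of $f^*$ as precomposition with $\ftil$, $h_*$ as postcomposition with $\htil$, and $d_i$ as a graded commutator is exactly the conceptual reason those direct computations go through; the paper simply records the endpoint of that computation without spelling out the commutator picture.
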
 
\begin{proof}
Treating $f^*$ first, we have to compare the image of $\gtil^1 \in
\enrhom_E(M',N)$ under $\sum (-1)^i f^*_i d_p$ and $\sum d_p
f^*_i$. Restricted to $SM \tensor (SE)^{\tensor w-1}$, the image of
$\gtil^1$ under both maps is
\[
\sum_{\substack{i+p=u;\, 1 \leq j \leq q \leq w}} \hspace{-1.5ex}
\left( (-1)^{\sclprod{\ftil_i}{\gtil}} 
\mtil^{N,1}_{pq} \gtil^q_j \ftil^{j}_{iw} +
(-1)^{\sclprod{\mtil_p}{\gtil}+\sclprod{\ftil_i}{\gtil}} 
\gtil^1_j \mtil^{M',j}_{pq}\ftil^q_{iw}\right).
\]
For $h_*$ we calculate
\[
\sum_{i+p=u} (d_p ((h_*)_i(\gtil^1)))^1_w 
= \sum_{i+p=u} (-1)^i( (h_*)_i(d_p(\gtil^1)))^1_w.
\] 
The statement about the tdga-module structures follows from the
associativity of the composition.
\end{proof}

For $F$-modules $M$ and $N$, a \dainfalg map $f \colon E \to F$ induces a
$k$-linear map $\res_f \colon \enrhom_F(M,N) \to \enrhom_E(M,N)$. On
$\gtil^1 \colon SM \tensor TSE[s,t] \to SN$, the restriction of
$(\res_f)_u(\gtil^1)$ to $SM \tensor (SE)^{\tensor v-1}$ is defined as
follows: for $v=1$ and $u=0$, it is $\gtil^1$, for $v=1$ and $u \neq 0$, it
is $0$, and for $v \geq 2$ it is
\[ (\res_f \gtil^1)^1_{uv} = \sum_{2\leq j \leq v}
(-1)^{\sclprod{\gtil}{\ftil_u}} 
 \gtil^1_j(\eins \tensor \ftil^{j-1}_{u,v-1}) 
 \colon SM \tensor (SE)^{\tensor v-1}[s,t] \to SN[u,-u].\] 
\begin{lemma}
  The $\res_f$ is a map of twisted chain complexes and respects the
  composition pairing. Particularly, it is a tdga map if $M=N$.
\end{lemma}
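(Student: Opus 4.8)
The plan is to establish three facts about $\res_f$: that it is a map of twisted chain complexes, that it is compatible with the composition pairings, and that it preserves units. Together these say that $\res_f$ is a morphism of monoids in $\tChxes_k$, and since tdgas are precisely the monoids in $\tChxes_k$ (compare the remark following Definition \ref{def:tch}) --- where for a map between tdgas one checks directly that only the $v \leq 2$ instances of \ref{equ:mor_rel}, together with the unit condition \eqref{equ:mor_unitcondition}, carry content --- the last sentence of the lemma will follow formally from the first two. Throughout I would fix an $F$-module $M$, write $\mtil^{M,F}$ for its structure maps and $\mtil^{M,E}$ for the ones induced along $f$ as in Lemma \ref{lem:induced_modstr}, and perform every computation after restricting to a single summand $SM \tensor (SE)^{\tensor w-1}$, so that all sums are finite and the reindexing stays visible.

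The main computational step is to show that $\res_f$ is a map of twisted chain complexes, i.e.\ that $\sum_{i+p=u} (-1)^i (\res_f)_i d_p = \sum_{i+p=u} d_i (\res_f)_p$ on an arbitrary $\gtil^1 \in \enrhom_F(M,N)$, with $d$ the differential of $\enrhom_F(M,N)$ on the left and of $\enrhom_E(M,N)$ on the right. Unwinding the definitions of the $d_i$ and of $(\res_f)_i$, the left side produces terms in which the components of $\mtil^{N,F}$ and $\mtil^{M,F}$ sit between $\gtil$ and the copies of the $\ftil^{j}_{uv}$ of $f$ contributed by $\res_f$, while the right side produces the analogous terms with $\mtil^{N,E}$ and $\mtil^{M,E}$. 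I would then rewrite every $\mtil^{-,E}$ via the defining formula of Lemma \ref{lem:induced_modstr} and use that $f$ is a \dainfalg map in the form $\sum_{i+p=u} (-1)^i \ftil^1_i \mtil^E_p = \sum_{i+p=u} \mtil^{F,1}_i \ftil_p$ of Lemma \ref{lem:vari_map_rel}; after this substitution the ``interior'' contributions match termwise and the leftover terms cancel in pairs. This is precisely the style of bookkeeping carried out for Lemma \ref{lem:induced_maps}, and the Koszul signs $\sclprod{\gtil}{\ftil_u}$ and $\sclprod{\mtil_p}{\gtil}$ are managed exactly as there.

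Next I would verify compatibility with the composition pairing: since the pairing on $\enrhom$ is of horizontal degree zero (it is the $m_{02}$ of the tdga), this amounts to checking $(\res_f)_u \circ \mu_F = \mu_E \circ \sum_{i+p=u} (\pm)\, (\res_f)_i \tensor (\res_f)_p$ for all $u$ --- which is exactly the $v=2$ instance of \ref{equ:mor_rel} for $\res_f$. Expanding both sides on a summand $SL \tensor (SE)^{\tensor w-1}$ for $F$-modules $L,M,N$, each side becomes a sum of iterated compositions of components of module maps and of the maps $\ftil^{j}_{uv}$ of $f$, and the two agree by the relation $\ftil^{j}_{uv} = \sum \ftil^1_{u_1 v_1} \tensor \dots \tensor \ftil^1_{u_j v_j}$ defining $\ftil^{j}$, together with the description of the composition in $\enrhom$ from Proposition \ref{prop:end_is_tdga}; the signs again line up by the Koszul rule. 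The unit is immediate: the unit of $\enrhom_F(M,M)$ is the element with $\gtil^1_1 = \id_{SM}$ and $\gtil^1_j = 0$ for $j \geq 2$, and $(\res_f)_0$ sends it to the element with the same description, because its restriction to $SM \tensor (SE)^{\tensor 0}$ is $\gtil^1$ itself while its restrictions to higher summands involve only the vanishing $\gtil^1_j$ with $j \geq 2$; likewise $(\res_f)_p$ annihilates it for $p \geq 1$. Hence $\res_f$ preserves the unit and, when $M = N$, satisfies \eqref{equ:mor_unitcondition}.

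Assembling these and invoking Proposition \ref{prop:end_is_tdga}, $\res_f \colon \enrhom_F(M,M) \to \enrhom_E(M,M)$ is a map of tdgas in the sense of Definition \ref{def:tdgas}. I expect the only genuine obstacle to be the sign bookkeeping: all three identities are forced the moment the definitions are unwound, but keeping the Koszul signs aligned through the substitution of Lemma \ref{lem:induced_modstr} into the differentials, and through the expansion of each $\ftil^{j}_{uv}$ into a tensor product of $\ftil^1$'s, requires some care. As in the proofs of Lemmas \ref{lem:induced_modstr} and \ref{lem:induced_maps} and of Proposition \ref{prop:end_is_tdga}, restricting attention to one summand $SM \tensor (SE)^{\tensor w-1}$ at a time is what keeps this manageable.
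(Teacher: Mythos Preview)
Your proposal is correct and follows the same approach as the paper: the paper's proof simply records the two identities
\[
\sum_{i+p=u} d_p (\res_f)_i = \sum_{i+p=u} (-1)^i (\res_f)_i d_p
\quad\text{and}\quad
\sum_{i+p=u} \mu ((\res_f)_i \tensor (\res_f)_p) = (\res_f)_u \mu
\]
and asserts that both ``can be checked directly'', without further detail. Your write-up is a faithful expansion of exactly this direct check, correctly identifying Lemma~\ref{lem:induced_modstr} and Lemma~\ref{lem:vari_map_rel} as the inputs for the first identity and the decomposition of $\ftil^{j}_{uv}$ for the second; your additional verification of the unit is a welcome point the paper leaves implicit.
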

\begin{proof} 
Both
\[
\sum_{i+p=u} d_p (\res_f)_i = \sum_{i+p=u} (-1)^i (\res_f)_i d_p \; \text{ and
}\; \sum_{i+p=u} \mu ((\res_f)_i \tensor (\res_f)_p) = (\res_f)_u \mu 
\]
can be checked directly.
\end{proof}

Let $E$ be a \dainf{}-algebra. Viewing $E$ as an $E$-module,
$\enrhom_E(E,E)$ is an tdga by Proposition \ref{prop:end_is_tdga}. We want
to define a \dainfalg map $\psi^E \colon E \to \enrhom_E(E,E)$. By Lemma
\ref{lem:vari_map_rel}, it is enough to give maps $\psitil^{E,1}_{iv}
\colon (SE)^{\tensor v} \to S \enrhom_E(E,E)[i,-i]$. For $i \geq 0$ and
$v\geq 1$, let $\rhotil_{iv}^{E,1}$ be the map with source $SE^{\tensor v}$
being adjoint to the restriction of $\mtil^{E,1}_{i}$ to $SE^{\tensor v}
\tensor SE \tensor TSE$, and let $\psitil^{E,1}_{iv} = \sigma^{-1}_E
\rhotil^{E,1}_{ij}$.

The analogs in the world of $A_{\infty}$-algebras to the next three lemmas
are \cite[Lemme 4.1.1.6.b and Lemme 5.3.0.1]{Lefevere_sur}.

\begin{lemma}\label{lem:psi_map_mult}
  The $\psitil^{E,1}_{ij}$ constitute a \dainfalg map $\psitil^E \colon E
  \to \enrhom_E(E,E)$.
\end{lemma}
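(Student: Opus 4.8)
The plan is to verify the economical criterion (iii) of Lemma~\ref{lem:vari_map_rel} with $F = \enrhom_E(E,E)$, that is, the identity $\sum_{i+p=u}(-1)^i \psitil^{E,1}_i \mtil^E_p = \sum_{i+p=u} \mtil^{\enrhom,1}_i \psitil^E_p$ of maps $\Tbar SE \to S\enrhom_E(E,E)[u,1-u]$, together with the unit condition of Definition~\ref{def:dainf_maps}. Since $\enrhom_E(E,E) = \Hom_k(SE \tensor TSE, SE)$ and $SE \tensor TSE \iso \Tbar SE$, a map with target $S\enrhom_E(E,E)[u,1-u]$ is determined by its restrictions to the summands $SE^{\tensor v}$ of $\Tbar SE$ and, after passing to adjoints, by the resulting maps $SE^{\tensor v} \tensor SE \tensor TSE \to SE$; so it is enough to compare the two sides after evaluating on $x_1 \tensor \dots \tensor x_v \in SE^{\tensor v}$ and then on $x_{v+1} \tensor \dots \tensor x_n \in SE \tensor TSE$.

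The next step is to unwind the ingredients. By the definition of $\rhotil^{E,1}$, evaluating $\psitil^{E,1}_{iv}(x_1 \tensor \dots \tensor x_v)$ on $x_{v+1} \tensor \dots \tensor x_n$ returns, up to the fixed $\sigma$-twist, the value of $\mtil^{E,1}_i$ on the concatenation $x_1 \tensor \dots \tensor x_n$; thus $\psitil^{E,1}_i$ is precisely the restriction of $\mtil^{E,1}_i$ along $\Tbar SE \tensor \Tbar SE \hookrightarrow \Tbar SE$. Since $\enrhom_E(E,E)$ is a tdga by Proposition~\ref{prop:end_is_tdga}, only the components $\mtil^{\enrhom,1}_{i1}$ and $\mtil^{\enrhom,1}_{02}$ of $\mtil^{\enrhom,1}_i$ contribute: the first is the differential $d_i$ of Definition~\ref{def:twcx_enr}, which sends $\gtil^1 \in \enrhom_E(E,E)$ to $\mtil^{E,1}_i \gtil - (-1)^{\sclprod{\gtil}{\mtil_i}} \gtil^1 \mtil^E_i$ (using Example~\ref{ex:free_modul} to identify the $E$-module structure on $E$ with its \dainfalg structure), and the second is the composition pairing $\mu$. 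Substituting these together with $\mtil^E_p = \sum \eins^{\tensor r} \tensor \mtil^{E,1}_{ps} \tensor \eins^{\tensor t}$ turns the desired identity into an identity of maps on $SE^{\tensor v} \tensor SE^{\tensor (n-v)}$ assembled entirely out of nested $\mtil^{E,1}_{ij}$.

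The key point is that this identity is exactly the \dainfalg object relation $\sum_{i+p=u}(-1)^i \mtil^{E,1}_i \mtil^E_p = 0$ of Lemma~\ref{lem:vari_obj_rel}(iii), evaluated on $x_1 \tensor \dots \tensor x_n$, with the summands sorted according to the position of the inner factor $\mtil^{E,1}_{ps}$ relative to the boundary between the first $v$ and the last $n-v$ tensor factors: the summands whose inner factor lies wholly among the first $v$ reassemble to $\sum_{i+p=u}(-1)^i \psitil^{E,1}_i \mtil^E_p$; those whose inner factor lies wholly among the last $n-v$ reassemble to the $\gtil^1 \mtil^E_i$ part of $\sum_{i+p=u} d_i \psitil^{E,1}_p$; and the straddling summands reassemble to the $\mtil^{E,1}_i\gtil$ part of $\sum_{i+p=u} d_i \psitil^{E,1}_p$ together with the $\mu$-contribution to $\sum_{i+p=u}\mtil^{\enrhom,1}_i\psitil^E_p$. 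The unit condition of Definition~\ref{def:dainf_maps} is immediate from the unit conditions \eqref{equ:obj_mod_unitcondition} for $E$ and the definition of the unit of $\enrhom_E(E,E)$. The only real obstacle is the sign bookkeeping: one must check that the Koszul signs and the $\sigma$-suspension twists carried by $\psitil$, by $d_i$, by $\mu$, and by the $(-1)^i$ weights match those occurring in $\sum_{i+p=u}(-1)^i \mtil^{E,1}_i \mtil^E_p$, so that the above sorting is an equality on the nose rather than merely up to sign — this is exactly the bookkeeping that the conventions of Remark~\ref{rem:signs_explanation} were set up to control, and it is the analogue of the $A_{\infty}$-computation of \cite[Lemme 4.1.1.6.b]{Lefevere_sur}.
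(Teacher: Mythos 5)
Your proposal is correct and follows essentially the same route as the paper: reduce to criterion (iii) of Lemma~\ref{lem:vari_map_rel}, pass to adjoint maps $(SE)^{\tensor v}\tensor SE\tensor TSE \to SE$, expand $\mtil^{\enrhom,1}_i$ into the differential $d_i$ and the composition $\mu$ of the tdga $\enrhom_E(E,E)$, and recognize the resulting identity as the \dainf{}-relation $\sum(-1)^i\mtil^{E,1}_i\mtil^E_p=0$ on the concatenated tensor string, sorted by the position of the inner operation relative to the cut at $v$. Your four-way sorting (inner factor in the first block, in the tail, straddling from position one, straddling from a later position) matches exactly the four groups of terms in the paper's final displayed equation, and the deferred sign check is carried out there only to the same extent you indicate.
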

\begin{proof}
  It is by Lemma \ref{lem:vari_map_rel} sufficient to show
\[
\sum_{i+p=u} \dtil_p \psitil^{E,1}_{iv} +
\hspace{-1.5ex} \sum_{\substack{i_1+i_2=u;\, j_1+j_2=v}}\hspace{-1.5ex} 
\mutil (\psitil^{E,1}_{i_1j_1}
\tensor \psitil^{E,1}_{i_2 j_2}) 
= \hspace{-1.5ex} \sum_{\substack{1\leq j\leq v;\, i+p=u}}\hspace{-1.5ex}
(-1)^i \psitil_{ij}^{E,1} \mtil^{E,j}_{pv}
\]
for $v \geq 1$. This is equivalent to
\[\begin{split}
& \sum_{i+p=u} (-1)^{p+1} \sigma_E^{-1} d_p \rhotil^{E,1}_{iv} +
 \hspace{-1.5ex}\sum_{\substack{i_1+i_2=u;\, j_1+j_2=v}} \hspace{-1.5ex}
 (-1)^{i_1+1} \sigma_E^{-1} \mu (\rhotil^{E,1}_{i_1j_1}
\tensor \rhotil^{E,1}_{i_2 j_2})\\
=&  \hspace{-1.5ex}\sum_{\substack{1\leq j\leq v;\,i+p=u}}
(-1)^i \sigma_E^{-1} \rhotil_{ij}^{E,1} \mtil^{E,j}_{pv}.
\end{split}\] 
We cancel the $\sigma_E^{-1}$ out, restrict along the injection
$(SE)^{\tensor w} \to SE \tensor TSE$, and consider the adjoint maps
$(SE)^{\tensor v} \tensor (SE)^{\tensor w} \to SE[u,2-u]$ to get the
following equation for $v,w \geq 1$ which is equivalent to the last one.
\[ \begin{split}
&\hspace{-1.5ex} \sum_{\substack{1 \leq j \leq w; \, i+p=u}}\hspace{-1.5ex} 
(-1)^{p+1} \mtil^{E,1}_{p,j} ( \mtil^{E,1}_{i,v+w+1-j} \tensor
\eins^{\tensor j-1})\\
+&\hspace{-1.5ex}\sum_{\substack{1 \leq j \leq w; \, i+p=u}} \hspace{-1.5ex}
(-1)^{i+1} \mtil^{E,1}_{i,v+j} ( \eins^{\tensor v} \tensor
\mtil^{E,w}_{p,j}) \\
+& \sum_{\substack{i_1 + i_2 = u \\ j_1 + j_2 = v, \, j_s \geq 1 \\ 
w_1 + w_2 = w, \, w_s \geq 0}}
(-1)^{i_1+1} \mtil^{E,1}_{i_1, j_1 + 1 + w_1} (\eins^{\tensor j_1} \tensor
\mtil^{E,1}_{i_2, j_2 +1 + w_2} \tensor \eins^{\tensor w_1})\\
&= \hspace{-1.5ex}\sum_{\substack{1 \leq j \leq v; \, i+p = u}} \hspace{-1.5ex}
(-1)^i \mtil^{E,1}_{i,j+w}(\mtil^{E,j}_{p,v} \tensor \eins^{\tensor w})
\end{split} \] 
This holds since $E$ is an \dainf{}-algebra. 
\end{proof}

If $M$ is an $E$-module, there is a map $\psi^{M,E} \colon M \to
\enrhom_E(M,E)$ of twisted chain complexes: we define $\psi^{M,E}_{i}$ to
be the adjoint of the map
\[ (-1)^i \mtil^{M,1}_{i} ( \sigma_M^{-1} \tensor \eins \tensor \eins) \colon M \tensor SE
\tensor TSE \to SM[i,-i].\]
When forming the associated map $\psitil^{M,E}$, the $(-1)^i$ cancels, and
for $M = E$ we observe that $\psitil^{E,E}_i = \psitil^{E,1}_{i1}$. In
particular, the underlying map of twisted chain complexes of the
$\psi^E$ is $\psi^{E,E}$. 

The proof of the last lemma is easily adopted to provide\begin{lemma}\label{lem:psi_map_add}
  The $\psi^{M,E} \colon M \to \enrhom_E(E,M)$ is a map of twisted chain
  complexes.
\end{lemma}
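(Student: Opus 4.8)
The plan is to mimic the proof of Lemma~\ref{lem:psi_map_mult}, keeping only its additive content. Since $\psi^{M,E}$ is asserted to be merely a map of twisted chain complexes --- not an $E$-module map or a \dainf{}-algebra map --- no unit or degree conditions enter, and by Definition~\ref{def:tch} it suffices to prove
\[
\sum_{i+p=u} (-1)^i\, \psi^{M,E}_i\, m^M_{p1} \;=\; \sum_{i+p=u} d_i\, \psi^{M,E}_p
\qquad (u \geq 0),
\]
where the $m^M_{p1}$ give the underlying twisted chain complex of the $E$-module $M$ and the $d_i$ are the differentials of $\enrhom_E(E,M) = \Hom_k(SE \tensor TSE, SM)$ from Definition~\ref{def:twcx_enr}. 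Conceptually, this is just the assertion that the $E$-module structure of $M$, reassembled as a single adjoint map $M \to \enrhom_E(E,M)$, is compatible with the differentials.

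First I would pass to adjoints, so that the displayed identity becomes an equality of maps $M \tensor SE \tensor TSE \to SM[u,1-u]$. Then, exactly as in the proof of Lemma~\ref{lem:psi_map_mult}, I would restrict along $(SE)^{\tensor w} \to SE \tensor TSE$, use the isomorphism $\Psi$ (as in the proof of Lemma~\ref{lem:vari_obj_rel}) to commute $\sigma_M^{-1}$ past $m^M_{p1}$, and cancel the common $\sigma_M^{-1}$, leaving an identity among the $\mtil^{M,1}$'s and the $E$-module structure maps $\mtil^E$ of the free module $E$. Substituting the formula $d_i(\gtil^1) = \mtil^{M,1}_i \gtil - (-1)^{\sclprod{\gtil}{\mtil_i}} \gtil^1 \mtil^E_i$ into the right-hand side and expanding $\mtil^M_p$ through its defining decomposition $\mtil^{M,q}_{ij} = \mtil^{M,1}_{i,j-q+1}\tensor\eins^{\tensor q-1} + \eins \tensor \mtil^{E,q-1}_{i,j-1}$, the terms on the two sides reorganize exactly into the summands of the relation $\sum_{i+p=u}(-1)^i\mtil^{M,1}_i\mtil^M_p = 0$ of Lemma~\ref{lem:vari_mod_rel}(ii): the $\mtil^{M,1}_i\gtil$-part of the right-hand side, together with the left-hand side, supplies the $\mtil^{M,1}_i$-after-head-$\mtil^{M,1}_p$ terms, while the $\gtil^1\mtil^E_i$-part supplies the $\mtil^{M,1}_i$-after-tail-$\mtil^E_p$ terms. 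Since $M$ is an $E$-module, that relation holds, which gives the identity.

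The point that needs care is the signs. The factor $(-1)^i$ built into the definition of $\psi^{M,E}_i$ is what reconciles the $(-1)^i$ in \ref{equ:tch_map_rel} with the $(-1)^i$ occurring in Lemma~\ref{lem:vari_mod_rel}(ii) (this is the cancellation of $(-1)^i$ already noted above for $\psitil^{M,E}$), and the Koszul sign $(-1)^{\sclprod{\gtil}{\mtil_i}}$ in the formula for $d_i$ is precisely the one produced when $\mtil^E_i$ is commuted past the leading $SM$-factor inside the adjoint. I expect this sign bookkeeping to be the only genuine obstacle; since none of the multiplication terms $\mu$ of $\enrhom_E(E,E)$ enter here, in contrast with Lemma~\ref{lem:psi_map_mult}, the argument is strictly shorter than the one it imitates.
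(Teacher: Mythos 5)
Your proposal is correct and takes essentially the same route as the paper, whose entire proof of this lemma is the remark that the argument for Lemma~\ref{lem:psi_map_mult} adapts; your reduction to the additive part of that argument --- expanding the differentials of Definition~\ref{def:twcx_enr}, sorting the resulting terms into the head and tail summands of the module relation in Lemma~\ref{lem:vari_mod_rel}(ii), and noting that the built-in $(-1)^i$ reconciles the signs --- is precisely that adaptation.
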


\begin{lemma}\label{lem:map_E1equiv}
  The map $\psi^{M,E}$ of Lemma \ref{lem:psi_map_add} is an
  $E_1$-equivalence.
\end{lemma}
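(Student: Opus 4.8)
The plan is to reduce the problem to a standard statement about $A_{\infty}$-modules. Since a map of twisted chain complexes is an $E_1$-equivalence precisely when its $0$-th component induces an isomorphism on the vertical homology $H^v_*$, i.e. on homology with respect to the differentials $d_0$, the statement reduces to showing that $(\psi^{M,E})_0 \colon M \to \enrhom_E(E,M)$ is a quasi-isomorphism, where $M$ carries $m_{01}^M$ and $\enrhom_E(E,M) = \Hom_k(SE \tensor TSE, SM)$ carries the differential $d_0$ of Definition \ref{def:twcx_enr}. The point is that this $d_0$, and the map $(\psi^{M,E})_0$, only involve the structure maps $m_{0j}$: by the $u = 0$ case of the relations and unit conditions of Definition \ref{def:dainf_obj} the $m_{0j}^E$ are an $A_{\infty}$-algebra structure on the bigraded module $E$, by the $u = 0$ case of Lemma \ref{lem:vari_mod_rel} together with \eqref{equ:mod_unitcondition} the $m_{0j}^M$ are an $A_{\infty}$-module structure on $M$ over $E$, and under this identification the pair $\bigl((\enrhom_E(E,M),d_0),\,(\psi^{M,E})_0\bigr)$ is exactly the $\Hom$-complex of $A_{\infty}$-module morphisms from the rank one free module $E$ to $M$ together with its canonical map out of $M$. (The $A_{\infty}$-formalism is used here in the symmetric monoidal category of $(\mN,\mZ)$-bigraded $k$-modules; since $d_0$ preserves the horizontal grading, everything may equally be read one horizontal degree at a time.)

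I would then deduce the lemma from the statement that, for any $A_{\infty}$-algebra $A$ and any $A_{\infty}$-$A$-module $M$, the canonical map $M \to \enrhom_A(A,M)$ is a quasi-isomorphism — the analogue of the results \cite[Lemme 4.1.1.6.b and Lemme 5.3.0.1]{Lefevere_sur} cited just above the lemma, see also \cite[\S 6.3]{Keller_introduction}. To keep the paper self-contained I would prove this directly. A one-sided inverse to $(\psi^{M,E})_0$ is the evaluation at the strict unit: for $\gtil^1 \colon SE \tensor TSE \to SM$ with first component $\gtil^1_1 \colon SE \to SM$, let $\varepsilon(\gtil^1)$ be the desuspension of $\gtil^1_1(\sigma_E^{-1}\eta_E)$. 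Using $\mtil^{E,1}_{01}(\sigma_E^{-1}\eta_E) = 0$ from \eqref{equ:obj_mod_unitcondition} one checks that $\varepsilon$ is a $d_0$-chain map, and using $\mtil^{M,1}_{02}(\eins \tensor \sigma_E^{-1}\eta_E) = \eins$ one gets $\varepsilon \circ (\psi^{M,E})_0 = \pm\id_M$. For the reverse composite I would exhibit a contracting homotopy $s$ that inserts the strict unit, $(s\gtil^1)^1_v(x_1 \tensor \dots \tensor x_v) = \pm\,\gtil^1_{v+1}(\sigma_E^{-1}\eta_E \tensor x_1 \tensor \dots \tensor x_v)$, and verify $d_0 s + s\,d_0 = \id - (\psi^{M,E})_0 \circ \varepsilon$: in the expansion, every term in which an $m_{0q}$ meets the inserted unit with $q \geq 3$ vanishes by the unit conditions \eqref{equ:obj_mod_unitcondition} and \eqref{equ:mod_unitcondition}, the $m_{01}$-term kills the unit, and the remaining $q = 2$ contributions combine to yield the identity and the term $-(\psi^{M,E})_0 \circ \varepsilon$. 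Since this homotopy is built purely from the strict unit, no projectivity or flatness hypothesis on $E$ or $M$ is needed, which matches the generality of the lemma.

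The routine but genuinely delicate part of this argument is the Koszul sign bookkeeping, and that is where I expect essentially all of the effort to lie: one must keep track of the signs of Definition \ref{def:twcx_enr} and of the $\widetilde{m}$-conventions both when identifying $(\enrhom_E(E,M),d_0)$ with the $A_{\infty}$-module $\Hom$-complex and when checking the homotopy identity $d_0 s + s\,d_0 = \id - (\psi^{M,E})_0\circ\varepsilon$ — in particular the several ``unit in the outer slot'' terms produced by $d_0 s$ and by $s\,d_0$ must be matched against each other and against the desired right-hand side. The conceptual content — reduce to the $A_{\infty}$-module fact, then prove that fact by evaluation at, and insertion of, the strict unit — is otherwise straightforward.
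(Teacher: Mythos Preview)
Your proposal is correct and follows essentially the same route as the paper: the paper also reduces to showing that $(\psi^{M,E})_0$ is a $d_0$-chain equivalence, defines the one-sided inverse $\tau$ by evaluation at the unit (your $\varepsilon$), and exhibits the chain homotopy $H(\gtil^1) = (-1)^{\langle \gtil^1, h\rangle}\gtil^1 h$ where $h$ inserts $\sigma_E^{-1}\eta_E$ in the first tensor slot (your $s$). Your additional framing as a reduction to the $A_\infty$-module statement is a presentational choice rather than a different argument.
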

\begin{proof}
  We show that $\psi^{M,E}_0$ is a chain equivalence. For $\gtil^1 \in
  \enrhom_E(E,M)$,
  \[ \Hom_k(SE \tensor TSE, SM) \xrightarrow{\Psi_1(\eta)^*} \Hom_k(Sk,Sm)
  \xrightarrow{\Psi_1^{-1}} \Hom_k(k,M) \iso M\] can be used to define
  $\tau(\gtil^1) = (-1)^{\langle \gtil^1, \psi \rangle} \Psi^{-1}_1
  (\gtil^1 \Psi_1(\eta) )$.  The $\tau$ is a chain map with respect to
  $d_0$ and $\tau \psi_0^{M,E} = \id$.

  Next we define a chain homotopy between $\psi^{M,E}_0 \tau$ and $\id$ on
  $\enrhom_E(E,M)$. Let $h$ be the composite $SE \tensor TSE[0,1] \iso Sk
  \tensor SE \tensor TSE \to SE \tensor TSE$ given by $S \eta$ on $Sk$ and
  the inclusion $SE \tensor TSE \to TSE$.  We define $H \colon
  \enrhom_E(E,M) \to \enrhom_E(E,M)[0,-1]$ by $H(\gtil^1) = (-1)^{\langle
    \gtil^1 , h \rangle} \gtil^1 h$ and calculate
\[
(d_0 H + Hd_0)(\gtil^1) = \gtil^1 - \psi^{M,E} \tau
(\gtil^1).\]
Therefore, $\psi^{M,E}$ induces an isomorphism on homology with respect to
$d_0$.
\end{proof}

\begin{lemma} \label{lem:flowerstar_homotopy} The maps $(f_*)_0 \psi^E_0$
  and $\psi_0^{F,E} f_0$ are chain homotopic as $d_0$-chain maps from $E$
  to $\enrhom_E(E,F)$.
\end{lemma}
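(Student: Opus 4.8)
The plan is to exhibit an explicit $d_0$-chain homotopy built from the components $\ftil^1_{0j}$ of $f$, and to recognise the required identity as a single instance of the \dainfalg morphism relations. First I would record that both $(f_*)_0 \psi^E_0$ and $\psi^{F,E}_0 f_0$ are indeed $d_0$-chain maps $E \to \enrhom_E(E,F)$: the map $\psi^E_0$ is one because it is the $0$-component of the underlying twisted chain complex map of the \dainfalg map $\psi^E$ (Lemma \ref{lem:psi_map_mult}), $\psi^{F,E}_0$ is one by Lemma \ref{lem:psi_map_add}, $f_0$ is one since $f$ is a \dainfalg map, and $(f_*)_0$ is one by Lemma \ref{lem:induced_maps} applied to $f$ viewed as an $E$-module map (Example \ref{ex:module_maps}). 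Moreover, all four maps only involve the ``vertical'' components $\mtil^{E,1}_{0j}$, $\mtil^{F,1}_{0j}$, $\ftil^1_{0j}$ with first index $0$; in particular $\psi^{F,E}_0$ uses the $E$-module structure on $F$ only through its $i=0$ part, which is computed from the $\ftil^1_{0j}$ and the multiplications of $F$ via Lemma \ref{lem:induced_modstr}. Thus the whole statement lives in the vertical direction and is the exact analogue, with the horizontal grading carried along passively, of the $A_{\infty}$-statement recorded in \cite[Lemme 5.3.0.1]{Lefevere_sur}.

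Next I would define the homotopy $\htil_0$, of bidegree $(0,-1)$, by letting $\htil_0(e) \in \enrhom_E(E,F)$ be the morphism whose arity-$v$ component is
\[
(\htil_0(e))^1_v \colon SE \tensor SE^{\tensor v-1} \to SF, \qquad
x \tensor y_1 \tensor \dots \tensor y_{v-1} \mapsto \pm\, \ftil^1_{0,v+1}(e \tensor x \tensor y_1 \tensor \dots \tensor y_{v-1}),
\]
i.e.\ ``feed $e$ into the first slot of $\ftil^1_{0,v+1}$''. The unit condition \eqref{equ:mor_unitcondition} for $f$ guarantees that $\htil_0(e)$ vanishes on the submodules on which an element of $\enrhom_E(E,F)$ is required to vanish, so $\htil_0$ is well defined, and its bidegree matches that of the homotopies appearing in Section \ref{sec:minimal}.

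The verification of $d_0 \htil_0 + \htil_0 d_0 = (f_*)_0 \psi^E_0 - \psi^{F,E}_0 f_0$ is then carried out by expanding both sides on a string $e \tensor x \tensor y_1 \tensor \dots \tensor y_{v-1}$ and comparing with the morphism relation \ref{equ:mor_rel} in horizontal degree $0$ and arity $v+1$, evaluated on the same string. That relation sorts into four families of terms: the summands in which some $\mtil^{E,1}_{0q}$ with $q \geq 2$ acts on a block of inputs beginning with $e$ assemble to $(f_*)_0 \psi^E_0(e)$; the one with $q=1$ acting on $e$ is $\htil_0(d_0 e)$; the summands in which the $\mtil^{E,1}_{0q}$ acts only on the later slots assemble to the $-\htil_0(e)\,\mtil^E_0$ part of $d_0 \htil_0(e)$; and on the other side of \ref{equ:mor_rel} the summands whose first chunk $\ftil^1_{0,q_1}$ has $q_1 \geq 2$ assemble to the $\mtil^{F,1}_0 \circ \htil_0(e)$ part of $d_0 \htil_0(e)$ (using the description of the $E$-module structure on $F$ from Lemma \ref{lem:induced_modstr}), while those with $q_1 = 1$ assemble to $\psi^{F,E}_0 f_0(e)$. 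The main obstacle is entirely a matter of signs: one must track the Koszul signs introduced by the suspensions $\sigma_E$, $\sigma_F$ and by the adjunctions defining $\psi^E$, $\psi^{F,E}$ and $\enrhom_E$, and check that they reproduce the signs $(-1)^{rq+t+pj}$ and $(-1)^{\sigma}$ occurring in \ref{equ:mor_rel}. As in the proof of Lemma \ref{lem:psi_map_mult}, the cleanest route is to do the entire computation in terms of the suspended maps $\mtil^1$ and $\ftil^1$ rather than the $m_{ij}$ and $f_{ij}$.
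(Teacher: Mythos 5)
Your homotopy is exactly the one the paper writes down, namely $(H(x))^1_j = \ftil^1_{0,j+1}(\sigma_E^{-1}x \tensor \eins^{\tensor j})$ (the paper gives only this formula and omits the verification entirely), and your sorting of the terms of the morphism relation \ref{equ:mor_rel} in horizontal degree $0$ into the pieces $(f_*)_0\psi^E_0$, $\htil_0 d_0$, $\htil_0(e)\mtil^E_0$, $\mtil^{F,1}_0\htil_0(e)$ via the induced module structure, and $\psi^{F,E}_0 f_0$ is the correct way to check it. So the proposal is correct and takes essentially the same approach as the paper, with more of the verification spelled out.
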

\begin{proof}
The chain homotopy is 
\[ H \colon E \to \enrhom_E(E,F), \quad (H(x))^1_j =
\ftil^1_{0,j+1}(\sigma_E^{-1}x \tensor \eins^{\tensor j}).\]
\end{proof}
\begin{corollary} \label{cor:flowerstar_equiv} Let $f \colon E \to F$ be a
  map of \dainf{}-algebras. The induced map $f_* \colon \enrhom_E(E,E) \to
  \enrhom_E(E,F)$ is an $E_2$-equivalence of twisted chain complexes if and
  only if $f$ is an $E_2$-equivalence.
\end{corollary}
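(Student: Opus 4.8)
The plan is to deduce the statement from Lemmas~\ref{lem:map_E1equiv} and~\ref{lem:flowerstar_homotopy} by a two-out-of-three argument on $E_2$-equivalences. First I would recall that the underlying twisted chain complex map $\psi^{E,E}$ of the \dainfalg map $\psi^E \colon E \to \enrhom_E(E,E)$, and likewise $\psi^{F,E} \colon F \to \enrhom_E(E,F)$, are $E_1$-equivalences by Lemma~\ref{lem:map_E1equiv}, hence $E_2$-equivalences since every $E_1$-equivalence is one.

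Next I would form the square of twisted chain complex maps with horizontal edges $f \colon E \to F$ and $f_* \colon \enrhom_E(E,E) \to \enrhom_E(E,F)$ and vertical edges $\psi^{E,E}$ and $\psi^{F,E}$. By Lemma~\ref{lem:flowerstar_homotopy} the $0$-components of the two composites $f_* \circ \psi^{E,E}$ and $\psi^{F,E} \circ f$, namely $(f_*)_0 \psi^E_0$ and $\psi^{F,E}_0 f_0$, are homotopic as $d_0$-chain maps. Now the map induced on the $E_2$-term by a twisted chain complex map $\phi$ is $H^h_*(H^v_*(\phi_0))$, so it depends only on the $d_0$-chain homotopy class of the $0$-component $\phi_0$, and $H^v_*$, hence also $H^h_* H^v_*$, is functorial for $d_0$-chain maps. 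Writing $E^2(-)$ for the induced map on $E_2$-terms, it follows that the square commutes on $E_2$-terms: $E^2(f_*) \circ E^2(\psi^{E,E}) = E^2(\psi^{F,E}) \circ E^2(f)$.

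Finally, $E^2(\psi^{E,E})$ and $E^2(\psi^{F,E})$ are isomorphisms by the first step, so the displayed identity shows that $E^2(f_*)$ is an isomorphism if and only if $E^2(f)$ is, which is precisely the assertion. I do not expect a genuine obstacle, as the substantive work is already contained in Lemmas~\ref{lem:map_E1equiv} and~\ref{lem:flowerstar_homotopy}; the only point to be careful about is the observation that the $E_2$-term of a twisted chain complex, and the maps induced on it, see the $0$-component only up to $d_0$-chain homotopy, which is exactly what allows the homotopy commutativity of $0$-components in Lemma~\ref{lem:flowerstar_homotopy} to replace an honest commuting diagram of twisted chain complex maps.
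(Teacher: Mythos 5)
Your proof is correct and follows the same route as the paper: both use Lemma \ref{lem:flowerstar_homotopy} to get that $f_*\psi^{E}$ and $\psi^{F,E}f$ agree on $H^h_*H^v_*$, Lemma \ref{lem:map_E1equiv} to see that the two $\psi$'s are $E_1$- and hence $E_2$-equivalences, and then conclude by two-out-of-three. Your write-up just makes explicit the (correct) observation that the $E_2$-term map depends only on the $d_0$-homotopy class of the $0$-component, which the paper leaves implicit.
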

\begin{proof}
  By Lemma \ref{lem:flowerstar_homotopy}, the maps induced by $f_* \psi^E$
  and $\psi^{E,F} f$ on $H_*^h H_*^v$ coincide. Both $\psi^E$ and
  $\psi^{E,F}$ are $E_1$-equivalences by Lemma \ref{lem:map_E1equiv}, and
  hence $E_2$-equivalences.
\end{proof}

\begin{lemma}\label{lem:psiFE_factorization}
  Let $f \colon E \to F$ be a \dainf{}-algebra map.  Then $\psi^{F,E}$ and
  $f^* \res_f \psi^{F,F}$ coincide as maps of twisted chain complexes from
  $F$ to $\enrhom_E(E,F)$.
\end{lemma}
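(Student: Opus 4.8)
The plan is to verify the identity componentwise. Both $\psi^{F,E}$ and $f^*\res_f\psi^{F,F}$ are maps of twisted chain complexes from $F$ to $\enrhom_E(E,F)$, so it is enough to show that the two underlying families of $k$-linear maps coincide. Using the tensor--hom adjunction from the definition of $\psi^{M,E}$, giving the component indexed by $u$ of such a map amounts to giving a $k$-linear map $F \tensor SE \tensor TSE \to SF[u,-u]$, and I would compare the two resulting maps after restricting to $F \tensor SE \tensor (SE)^{\tensor v-1}$ for each $v \geq 1$.

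First I would unwind the left hand side. By construction $\psi^{F,E}$ is assembled from the $E$-module structure on $F$, which is the module structure induced along $f$ (Example \ref{ex:induced_modstr}); by Lemma \ref{lem:induced_modstr} this structure equals $\mtil^{F,E,1}_{uv} = \sum_{i+p=u,\,1 \leq j \leq v} \mtil^{F,F,1}_{ij}(\eins \tensor \ftil^{j-1}_{p,v-1})$, where $\mtil^{F,F,1}$ is the $F$-module structure of $F$, which by Example \ref{ex:free_modul} is just the \dainf{}-structure of $F$ read off through $\Tbar SF \iso SF \tensor TSF$. Feeding this into the definition of $\psi^{F,E}$, its component indexed by $u$ restricted to $F \tensor SE \tensor (SE)^{\tensor v-1}$ sends $x \tensor e_0 \tensor \dots \tensor e_{v-1}$, up to the sign appearing in the definition of $\psi^{F,E}$, to a sum over partitions of $e_0 \tensor \dots \tensor e_{v-1}$ into $b \geq 1$ consecutive non-empty blocks in which one applies $\ftil^1$ to each block and feeds $\sigma_F^{-1}x$ followed by the $b$ block outputs into the $(b+1)$-ary structure map $\mtil^{F,1}$ of $F$.

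Then I would unwind the right hand side. The element $\psi^{F,F}(x) \in \enrhom_F(F,F)$ has, by definition, as its components the structure maps of $F$ with $\sigma_F^{-1}x$ plugged into the first argument. Applying $\res_f$ precomposes the non-module inputs of such a component with the bundled map $\ftil$, leaving the module slot untouched and being trivial outside horizontal degree $0$ when there are no non-module inputs; then applying $f^*$ along the $E$-module map $f \colon E \to F$ precomposes the module slot together with its first few algebra neighbours with one copy of $\ftil^1$. Combining the two, the component indexed by $u$ of $f^*\res_f\psi^{F,F}$ restricted to $F \tensor SE \tensor (SE)^{\tensor v-1}$ again partitions $e_0 \tensor \dots \tensor e_{v-1}$ into $b \geq 1$ consecutive blocks---the first of them contributed by $f^*$, any remaining ones by $\res_f$---applies $\ftil^1$ to each block, and feeds $\sigma_F^{-1}x$ followed by the $b$ outputs into the $(b+1)$-ary structure map of $F$. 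This is the same description as on the left hand side once one matches the two block decompositions and checks the degenerate cases $v = 1$ and the terms of horizontal degree $0$ by hand.

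The step I expect to be the main obstacle is precisely this matching, together with the sign bookkeeping: one has to check that the explicit signs in the definitions of $\res_f$, of $f^*$, of the composition of maps of twisted chain complexes, and of $\psi^{F,E}$ and $\psi^{F,F}$ all combine to the sign appearing in the definition of $\psi^{F,E}$. Here the feature recorded in Remark \ref{rem:signs_explanation}---that the bidegrees $|\mtil^E_{ij}| = (i,1-i)$ and $|\ftil^1_{ij}| = (i,-i)$ do not depend on the tensor length $j$---is what keeps this computation manageable, and the argument runs parallel to the corresponding statements about modules over $A_{\infty}$-algebras in \cite[\S 4]{Lefevere_sur}.
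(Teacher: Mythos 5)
Your proposal is correct and takes essentially the same route as the paper: the paper's entire proof is the observation that $((f^* \res_f)_i (\gtil^1))^1_j = \sum_{1 \leq q \leq j} (-1)^{\sclprod{\gtil}{\ftil_i}} \gtil^1_q \ftil^q_{ij}$, which is exactly your block-decomposition description of the right-hand side, after which the comparison with $\psi^{F,E}$ via the induced $E$-module structure on $F$ is immediate. The sign bookkeeping you flag as the remaining obstacle is likewise left as "one can check" in the paper, so your level of detail matches the original.
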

\begin{proof}
One can check that
\[ ((f^* \res_f)_i (\gtil^1))^1_j = \sum_{1 \leq q \leq j}
(-1)^{\sclprod{\gtil}{\ftil_i}} \gtil^1_q \ftil^q_{ij}
\]
holds for $j \geq 1$. 
This easily implies the assertion.
\end{proof}
\begin{corollary} \label{cor:fupperstar_res_equiv} The map $f^* \res_f$ is
  an $E_2$-equivalence of twisted chain complexes.
\end{corollary}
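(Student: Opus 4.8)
\emph{Proof plan.} The plan is to deduce the corollary from the factorization of $\psi^{F,E}$ established in Lemma \ref{lem:psiFE_factorization}. That lemma provides the identity of maps of twisted chain complexes
\[ f^*\res_f \circ \psi^{F,F} \;=\; \psi^{F,E} \colon F \longrightarrow \enrhom_E(E,F), \]
where $\psi^{F,F}$ is the map of Lemma \ref{lem:psi_map_add} applied to the free rank-one module over the \dainfalg $F$ (Example \ref{ex:free_modul}) and $\psi^{F,E}$ is the same construction for the $E$-module $F$ of Example \ref{ex:induced_modstr}. By Lemma \ref{lem:map_E1equiv}, both $\psi^{F,F}$ and $\psi^{F,E}$ are $E_1$-equivalences, hence $E_2$-equivalences; and $f^*\res_f$ is a composite of maps of twisted chain complexes, namely $f^*$ from Lemma \ref{lem:induced_maps} and the restriction map $\res_f$, so in particular its degree-zero component $(f^*\res_f)_0$ is defined.

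The one remaining point to record is that $E_2$-equivalences of twisted chain complexes enjoy the two-out-of-three property. This is formal: since $(gf)_0 = g_0 f_0$ in $\tChxes_k$ by Definition \ref{def:tch}, the assignment sending a twisted chain complex $X$ to the $(\mN,\mZ)$-bigraded $k$-module $\bigl(H^h_s(H^v_t(X))\bigr)_{s,t}$ and a map $f$ to $\bigl(H^h_s(H^v_t(f_0))\bigr)_{s,t}$ is a functor on $\tChxes_k$, and by Definition \ref{def:E2equiv} the $E_2$-equivalences are precisely the morphisms it sends to isomorphisms; the class of morphisms inverted by a functor is always closed under two-out-of-three. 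Applying this to the displayed factorization, with $\psi^{F,F}$ and $\psi^{F,E}$ both $E_2$-equivalences, yields that $f^*\res_f$ is an $E_2$-equivalence.

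I do not anticipate a genuine obstacle: the real content sits in Lemmas \ref{lem:psiFE_factorization} and \ref{lem:map_E1equiv}, and what is left is bookkeeping. The only subtlety worth a moment's care is that two-out-of-three must be invoked for the homology functor $H^h_*(H^v_*((-)_0))$ on the whole category $\tChxes_k$, rather than merely on maps of \dainf{}-algebras, since $f^*\res_f$ need not itself come from a \dainfalg map; this is harmless, as that functor is defined on all twisted chain complex maps, exactly as already used in the discussion preceding Definition \ref{def:E2equiv}.
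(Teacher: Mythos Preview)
Your argument is correct and is precisely the intended one: the paper places this corollary immediately after Lemma~\ref{lem:psiFE_factorization} because the factorization $\psi^{F,E}=f^*\res_f\,\psi^{F,F}$, combined with Lemma~\ref{lem:map_E1equiv} showing both $\psi^{F,F}$ and $\psi^{F,E}$ are $E_1$-equivalences, yields the claim by two-out-of-three. Your additional remarks on two-out-of-three for $H^h_*H^v_*((-)_0)$ are accurate and merely make explicit what the paper leaves implicit.
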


\begin{proposition} \label{prop:qequiv} Let $f \colon E \to F$ be an
  $E_2$-equivalence of \dainfalgs with $E_2$-homology concentrated in
  horizontal degree $0$. Then the dgas $\Tot \enrhom_E(E,E)$ and $\Tot
  \enrhom_F(F,F)$ are quasi-isomorphic.
\end{proposition}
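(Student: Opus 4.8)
The plan is to interpolate $\Tot\enrhom_E(E,F)$ between the two dgas, use Corollaries~\ref{cor:flowerstar_equiv} and~\ref{cor:fupperstar_res_equiv} to produce quasi-isomorphisms to it after applying $\Tot$, and then use the module structure on $\enrhom_E(E,F)$ to promote these chain-level quasi-isomorphisms to a zig-zag of quasi-isomorphisms of dgas.

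First I would record that all the twisted chain complexes below have $E_2$-homology concentrated in horizontal degree~$0$: by Lemma~\ref{lem:map_E1equiv} the maps $\psi^{E,E}\colon E\to\enrhom_E(E,E)$ and $\psi^{F,F}\colon F\to\enrhom_F(F,F)$ are $E_1$-equivalences, so these complexes inherit the $E_2$-homology of $E$ and $F$, and by Corollary~\ref{cor:flowerstar_equiv} so does $\enrhom_E(E,F)$; by hypothesis this homology lies in horizontal degree~$0$. Corollaries~\ref{cor:flowerstar_equiv} and~\ref{cor:fupperstar_res_equiv} then provide $E_2$-equivalences of twisted chain complexes
\[
\enrhom_E(E,E)\xrightarrow{\ f_*\ }\enrhom_E(E,F)\xleftarrow{\ f^*\res_f\ }\enrhom_F(F,F).
\]
Applying $\Tot$, using Lemma~\ref{lem:Tot_mult} and the naturality of $\rho$, and comparing each of these maps with the corresponding $\rho$ as in the square $\rho_X,\rho_Y,\Tot(\text{map})$: the two $\rho$'s are $E_2$-equivalences by Lemma~\ref{lem:Tot_E2equiv}, hence so is the induced map of total complexes, and since its source and target are concentrated in horizontal degree~$0$ this means it is a quasi-isomorphism. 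Thus $\Tot\enrhom_E(E,E)\to\Tot\enrhom_E(E,F)\leftarrow\Tot\enrhom_F(F,F)$ are quasi-isomorphisms of chain complexes.

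To upgrade to dgas, I would use the composition pairings of Proposition~\ref{prop:end_is_tdga}: $\enrhom_E(E,F)$ is a left module over the tdga $\enrhom_E(F,F)$ and a right module over the tdga $\enrhom_E(E,E)$, the actions commute, and restricting the left action along the tdga map $\res_f$ makes $\enrhom_E(E,F)$ a left $\enrhom_F(F,F)$-module. Since $\Tot$ is strongly monoidal, $D:=\Tot\enrhom_E(E,F)$ is a $\Tot\enrhom_F(F,F)$--$\Tot\enrhom_E(E,E)$-bimodule. The map $f$, being an $E$-module map (Example~\ref{ex:module_maps}), is by Lemma~\ref{lem:vari_modmap_rel} the same datum as a $0$-cycle $\bar f\in D$, and unwinding the definitions of $f_*$, $f^*$, $\res_f$ and the units identifies the two chain-level quasi-isomorphisms above with the bimodule maps $x\mapsto\bar f\cdot x$ and $y\mapsto y\cdot\bar f$. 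From such data — a bimodule $D$ over dgas $R=\Tot\enrhom_F(F,F)$ and $S=\Tot\enrhom_E(E,E)$ with a $0$-cycle $\bar f$ for which $r\mapsto r\bar f$ and $s\mapsto\bar f s$ are quasi-isomorphisms — one obtains a zig-zag of quasi-isomorphisms of dgas between $R$ and $S$ by a Morita-type argument: either transfer the algebra structure along the module quasi-isomorphism $S\simeq D$, or realize $\Tot\enrhom_E(E,E)$ and $\Tot\enrhom_E(F,F)$ as the corners of the endomorphism dga $\Tot\enrhom_E(E\oplus F,E\oplus F)$ cut out by the projection idempotent $0$-cycles and conjugate one corner idempotent onto the other using $\bar f$ and a homotopy inverse, combining with $\Tot\res_f$.

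The step I expect to be the main obstacle is exactly this last one. Neither $f_*$ nor $f^*\res_f$ is a map of tdgas, so $\Tot$ alone does not yield dga maps; making the upgrade honest requires producing a homotopy inverse to the underlying module (or \dainfalg) $E_2$-equivalence and the attendant model-categorical bookkeeping — factoring through a cofibrant--fibrant model in dgas so a conjugating unit lifts, or replacing one leg by a fibration so a strict pullback computes the homotopy pullback. A subsidiary point to pin down along the way, if one routes through the corner $\Tot\enrhom_E(F,F)$, is that $\res_f$ is itself an $E_2$-equivalence; this again comes down to the existence of such a homotopy inverse.
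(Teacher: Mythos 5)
Your first half --- the zig-zag $\enrhom_E(E,E)\xrightarrow{f_*}\enrhom_E(E,F)\xleftarrow{f^*\res_f}\enrhom_F(F,F)$ of $E_2$-equivalences from Corollaries \ref{cor:flowerstar_equiv} and \ref{cor:fupperstar_res_equiv}, and the passage to quasi-isomorphisms of total complexes via Lemma \ref{lem:map_E1equiv} and Lemma \ref{lem:Tot_E2equiv} --- is exactly the paper's set-up, and your explicit handling of the concentration-in-horizontal-degree-$0$ step is if anything more careful than the paper's.

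The gap is in the upgrade to dgas, and it sits precisely where you predicted trouble, but your proposed mechanism is not merely incomplete: its key ingredient does not exist. You claim that the $E$-module map $f$ is ``the same datum as a $0$-cycle $\bar f\in D=\Tot\enrhom_E(E,F)$'' whose left and right multiplications realize the two quasi-isomorphisms. This is exactly what Remark \ref{rem:enrichment_pitfall} rules out: the component $\ftil^1_{ij}$ with $i\geq 1$ would have to sit in horizontal degree $-i$ of $\enrhom_E(E,F)$, and the horizontal grading is over $\mN$, so these components are simply not elements of the hom-complex (nor do they reappear after applying $\Tot$, which also only sums over $s\geq 0$). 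Only the $i=0$ components assemble to an element of bidegree $(0,0)$, and that element is not a cycle for the higher differentials $d_i$. Consequently both of your fallback routes that conjugate by ``$\bar f$ and a homotopy inverse'' are unavailable as stated. What the paper does instead is keep only the data that genuinely exists: $X=\Tot\enrhom_E(E,F)$ is an $S$-$R$-bimodule, $\alpha=\Tot(f_*)\colon R\to X$ is a quasi-isomorphism of right $R$-modules and $\beta=\Tot(f^*\res_f)\colon S\to X$ one of left $S$-modules (Lemma \ref{lem:induced_maps} and the lemma on $\res_f$), compatibly with the units. This is a ``quasi-equivalence minus the connecting element,'' and the proof then adapts Schwede--Shipley's Lemma A.2.3: factor $\alpha$ in the model category of right $R$-modules as an acyclic cofibration $R\to Y$ followed by an acyclic fibration $Y\to X$, pass to the endomorphism dga $T=\Hom_R(Y,Y)$, and connect $R$ to $T$ and $T$ to $S$ by strict pullbacks of dgas along the induced acyclic fibrations. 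To complete your write-up you must replace the appeal to $\bar f$ by this module-level argument (or an equivalent one that never asks for the bimodule element).
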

\begin{proof}
  By the Corollaries \ref{cor:flowerstar_equiv} and
  \ref{cor:fupperstar_res_equiv}, there is a diagram
  \[ \enrhom_E(E,E) \xrightarrow{f_*} \enrhom_E(E,F) \xleftarrow{f^*
    \res_f} \enrhom_F(F,F) \] with both maps $E_2$-equivalences. Setting
  \[R = \Tot \enrhom_E(E,E), \quad S = \Tot \enrhom_F(F,F), \; \text{ and }
  \quad X = \Tot \enrhom_E(E,F),\] we obtain dgas $R$ and $S$, an
  $S$-$R$-bimodule $X$, a quasi-isomorphism $\alpha \colon R \to X$ of
  right $R$-modules and an quasi-isomorphism $\beta \colon S \to X$ of left
  $S$-modules. This data is compatible with the unit maps in that the
  square obtained from composing the unit maps of $R$ and $S$ with $\alpha$
  and $\beta$ commutes.

  The described data is almost a `quasi-equivalence' of dgas as described
  in \cite[p.31]{Keller_deriving} and generalized in \cite[Definition
  A.2.1]{Schwede_S_modules}. The missing part is an element of $X$ such
  that left and right multiplication with the element equals $\alpha$ and
  $\beta$, respectively (compare Remark \ref{rem:enrichment_pitfall}).
  However, we can follow closely the strategy of the proof of \cite[Lemma
  A.2.3]{Schwede_S_modules}, exploiting the (projective) model structures
  on $\Chxes_k$ and $\Mod$-$R$.

  We factor $\alpha$ in the model category of right $R$-modules as a
  composition of an acyclic cofibration $\xymatrix@1{\alpha' \colon R \,
    \ar@{>->}[r]^{\sim} & Y}$ and an acyclic fibration
  $\xymatrix@1{\alpha'' \colon Y \ar@{->>}[r]^{\sim} & X}$. We consider the
  dga $T = \Hom_R(Y,Y)$, the $T$-$R$-bimodule $V = \Hom_R(R,Y)$, and the
  $S$-$T$-bimodule $W = \Hom_R(Y,X)$. The map $\alpha'$ induces an acyclic
  fibration $(\alpha')^* \colon T \to V$ and a quasi-isomorphism
  $(\alpha')_* \colon R \iso \Hom_R(R,R) \to V$. The multiplications on $R$
  and $T$ induce an associative and unital multiplication on the pullback
  $P(\alpha')$ of $(\alpha')_*$ and $(\alpha')^*$.  Hence the induced maps
  from $R \to P(\alpha') \ot T$ are maps of dgas. They are
  quasi-isomorphisms since $(\alpha')^*$ is an acyclic fibration.
 
  The $\alpha''$ induces an acyclic fibration $(\alpha'')_* \colon T \to W$
  since $Y$ is cofibrant as an $R$-module. Right multiplication with
  $\alpha''$ induces a map $S \to W$. It is a quasi-iso\-morphism since its
  composition with the quasi-isomorphism $W = \Hom_R(Y,X) \to \Hom_R(R,X)
  \iso X$ induced by $\alpha'$ equals $\beta$. The latter statement
  exploits the compatibility of $\alpha$ and $\beta$ with the units of $R$
  and $S$. As above, we form the pullback to get a chain $T \to P(\alpha'')
  \ot S$ of quasi-isomorphisms.
\end{proof}

\begin{proof}[Proof of Theorem \ref{thm:antimin_models}]
  Let $E$ be a \dainf{}-algebra. Lemma \ref{lem:Tot_mult} and Proposition
  \ref{prop:end_is_tdga} show that $\Tot \enrhom_E(E,E)$ is a dga.  If $A$
  is a dga and $E \to A$ an $E_2$-equivalence, Proposition
  \ref{prop:qequiv} implies that $\Tot \enrhom_A(A,A)$ is quasi-isomorphic
  to $\Tot \enrhom_E(E,E)$. Applying $\Tot$ to the $E_1$-equivalence $A \to
  \enrhom_A(A,A)$ and observing $A \iso \Tot A$ completes the proof.
\end{proof}
Over a field, every $A_{\infty}$-algebra is quasi-isomorphic to a dga.
However, not every \dainfalg $E$ is $E_2$-equivalent to a dga, since $E$
may have $E_2$-homology in horizontal degrees other than $0$. From this
perspective, the relation of dgas and \dainfalgs is more like viewing
ordinary modules as (positive) chain complexes concentrated in degree $0$.

\section{Four lemmas}
\begin{lemma} \label{lem:zln_cocycle} In the situation of the proof of
  Proposition \ref{prop:exis_str} we have $\mtil_{01}^{B,1} \ztil_{ln}=0$.
\end{lemma}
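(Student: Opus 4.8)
The plan is to carry out the transcription to the twisted setting of Kadeishvili's computation showing that the obstruction occurring in the inductive construction of a minimal model is a cocycle; since this proposition promotes the proof of \cite[Theorem 1]{Kadeishvili_homology} (compare \cite[\S 3.3]{Keller_introduction}), the argument should be the evident analogue, the extra horizontal grading being harmless because $B$ is a bidga, so the only structure maps of $B$ that occur are $\mtil^{B,1}_{01}$, $\mtil^{B,1}_{11}$ and $\mtil^{B,1}_{02}$. At the point in the induction of Proposition \ref{prop:exis_str} where $\ztil_{ln}$ is formed, the available facts are: (a) that $B$ is a bidga, i.e. the instances of \ref{equ:obj_rel} for $B$ in total degree $\le 3$ — the two Leibniz rules relating $\mtil^{B,1}_{02}$ and $\mtil^{B,1}_{01}$, the commutation $\mtil^{B,1}_{01}\mtil^{B,1}_{11} = \mtil^{B,1}_{11}\mtil^{B,1}_{01}$, and $\mtil^{B,1}_{01}\mtil^{B,1}_{01} = 0$ — together with $\mtil^{B,1}_{01}\ftil^1_{01} = 0$, since $f_{01}$ is a cycle selection homomorphism; (b) the relations \ref{equ:obj_rel} for $E$ in total degrees $\le l+n$, together with the minimality $\mtil^{E,1}_{01} = 0$; and (c) the already-established instances of \ref{equ:mor_rel}, which by the rewriting noted after \ref{equ:zln} read $\mtil^{B,1}_{01}\ftil^1_{uv} = \ftil^1_{01}\mtil^{E,1}_{uv} - \ztil_{uv}$ for all $u+v < l+n$.

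The computation proceeds by applying $\mtil^{B,1}_{01}$ to the three groups of terms in \ref{equ:zln}. In the first group I would commute $\mtil^{B,1}_{01}$ past $\mtil^{B,1}_{02}$ by the Leibniz rules of (a); each inner factor $\ftil^1_{p_sq_s}$ of $\ftil^2_{ln}$ satisfies $p_s+q_s < l+n$, so (c) rewrites the resulting $\mtil^{B,1}_{01}\ftil^1_{p_sq_s}$ as $\ftil^1_{01}\mtil^{E,1}_{p_sq_s}-\ztil_{p_sq_s}$. In the second, $\mtil^{B,1}_{01}\mtil^{B,1}_{11}\ftil^1_{l-1,n} = \mtil^{B,1}_{11}\mtil^{B,1}_{01}\ftil^1_{l-1,n}$ by (a), and (c) again applies to $\mtil^{B,1}_{01}\ftil^1_{l-1,n}$. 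In the third, the condition $j<n$ forces $i+j<l+n$, so (c) rewrites each $\mtil^{B,1}_{01}\ftil^1_{ij}$ as $\ftil^1_{01}\mtil^{E,1}_{ij}-\ztil_{ij}$. After expanding the definition \ref{equ:zln} of every $\ztil_{uv}$ that reappears, I expect the resulting terms to fall into three classes that each cancel: the reappearing $\ztil$'s cancel in pairs, the telescoping hard-wired into the recursive shape of \ref{equ:zln}; the terms carrying a leading $\ftil^1_{01}$ assemble, after reindexing, into $\ftil^1_{01}$ applied to an instance of \ref{equ:obj_rel} for $E$ in total degree $l+n+1$, which is $0$ by (b) — here one uses minimality to discard the terms that would involve $\mtil^E_0$; and the terms built only from $\mtil^{B,1}_{01}$, $\mtil^{B,1}_{11}$, $\mtil^{B,1}_{02}$ and $\ftil^1_{01}$ cancel by the relations of (a). This gives $\mtil^{B,1}_{01}\ztil_{ln}=0$.

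The genuine difficulty here is bookkeeping, not concept: one must track the Koszul signs prescribed by Definition \ref{def:dainf_obj} — which, as Lemma \ref{lem:vari_obj_rel} records, are exactly the ones produced by moving $\sigma$ past the $\mtil^{B,1}_{pq}$ — the signs introduced by the Leibniz rules, and the signs picked up when $\mtil^{B,1}_{01}$ is passed across a tensor factor, and then verify that the threefold index sums (over the splitting $i+p=l$, over the insertion positions inside each $\mtil^{E,j}_{pn}$, and over the two-fold compositions $q_1+q_2=n$ inside $\ftil^2_{ln}$) match up so that every term has a cancelling partner. To keep this manageable I would work throughout with the maps $\mtil^{E,1}_i$, $\mtil^{B,1}_i$ and $\ftil^1_i$ on $\Tbar SE$ and $\Tbar SB$ from Lemmas \ref{lem:vari_obj_rel} and \ref{lem:vari_map_rel}, whose bidegrees do not depend on the tensor length; in that language the assertion is the familiar one that the obstruction to extending a map of twisted chain complexes one step further is automatically a $\mtil^{B,1}_{01}$-cycle, a formal consequence of $\mtil^{B,1}_{01}\mtil^{B,1}_{01}=0$, of the analogous vanishing for $E$ in the degrees already at hand, and of the lower-degree instances of \ref{equ:mor_rel}.
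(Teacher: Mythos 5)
Your overall strategy coincides with the paper's: apply $\mtil^{B,1}_{01}$ to the three terms of $\ztil_{ln}$, use the Leibniz rules and the commutation $\mtil^{B,1}_{01}\mtil^{B,1}_{11}=\mtil^{B,1}_{11}\mtil^{B,1}_{01}$ coming from $B$ being a bidga, rewrite each resulting $\mtil^{B,1}_{01}\ftil^1_{uv}$ with $u+v<l+n$ as $\ftil^1_{01}\mtil^{E,1}_{uv}-\ztil_{uv}$, expand the reappearing $\ztil$'s via \ref{equ:zln}, and cancel. The paper's proof is precisely this computation, packaged as the three identities \eqref{equ:zln_coc1}--\eqref{equ:zln_coc3} whose sum vanishes term by term.

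There is, however, one concrete misstep in your cancellation scheme. You assert that the terms carrying a leading $\ftil^1_{01}$ assemble into $\ftil^1_{01}$ applied to an instance of \ref{equ:obj_rel} for $E$ in total degree $l+n+1$. That instance is not available at this stage: it is exactly what Lemma \ref{lem:zln_works} establishes \emph{afterwards}, and it involves the map $\mtil^{E,1}_{ln}$, which cannot be defined until the present lemma is proved; invoking it here is circular (and contradicts your own inventory (b), which correctly stops at total degree $l+n$). What is needed, and what suffices, are the instances of \ref{equ:obj_rel} in total degree at most $l+n$: after expanding the nested $\ztil_{ij}$'s, the relevant terms have the shape $\ftil^1_{i'j'}\,\mtil^{E,j'}_{p'j}\,\mtil^{E,j}_{pn}$ (the leading-$\ftil^1_{01}$ terms being the case $(i',j')=(0,1)$), and for fixed $(i',j')$ the inner sum over $p'+p=w$ and $j$ is a component of $\sum_{p'+p=w}(-1)^{p'}\mtil^{E,j'}_{p'}\mtil^{E}_{p}$ on $SE^{\tensor n}$ with $w\le l$, which vanishes by \ref{equ:obj_rel} in total degree $w+n\le l+n$ via Lemma \ref{lem:vari_obj_rel} (here one needs only $\mtil^{E,1}_{01}=0$ from minimality, not the vanishing of all of $\mtil^{E}_{0}$). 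With that correction your plan is the right one; but be aware that for a lemma of this sort the proof essentially \emph{is} the sign- and index-bookkeeping you defer, and that verification --- which occupies the entirety of the paper's argument --- is not carried out in your proposal.
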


\begin{proof}
We start with calculating 
\begin{equation} \label{equ:zln_coc1}\begin{split} 
& \mtil^{B,1}_{01} \mtil^{B,1}_{02} \ftil_{ln}^2 
=- \mtil^{B,1}_{02}(\eins \tensor \mtil^{B,1}_{01} 
+ \mtil^{B,1}_{01} \tensor \eins) \ftil_{ln}^2 \\
=& \mtil^{B,1}_{11} \mtil^{B,1}_{02} \ftil^2_{l-1,n} 
- \hspace{-1.5ex} \sum_{\substack{i+p=l; \, 2 \leq j < n}} \hspace{-1.5ex} 
(-1)^i \mtil^{B,1}_{02} \ftil^2_{ij} \mtil^{E,j}_{pn}.
\end{split} \end{equation}
The second summand of $\ztil_{ln}$ contributes
\begin{equation} \label{equ:zln_coc2} \begin{split}
\mtil^B_{01}  \mtil^{B,1}_{11} \ftil^1_{l-1,n} =&
\mtil^{B,1}_{11} \mtil^{B,1}_{01} \ftil^1_{l-1,n} \\
=& -\mtil^{B,1}_{11} \mtil^{B,1}_{02}  \ftil^2_{l-1,n}
+\hspace{-1.5ex} \sum_{\substack{i+p=l-1; \,  2 \leq j < n}} \hspace{-1.5ex} 
(-1)^i \mtil^{B,1}_{11} \ftil^1_{ij} \mtil^{E,j}_{pn}.
\end{split} \end{equation}
Applying $\mtil_{01}^B$ to the last summand of $\ztil_{ln}$ gives
\begin{equation} \label{equ:zln_coc3} \begin{split}
&  - \hspace{-1.5ex}  \sum_{\substack{i+p=l; \, 2 \leq j < n}} \hspace{-1.5ex} 
(-1)^i \mtil^B_{01} \ftil^1_{ij}
\mtil^{E,j}_{pn} \\
=&  \hspace{-1.5ex}  \sum_{\substack{i+p=l; \, 2 \leq j < n}} \hspace{-1.5ex} 
(-1)^{i}  \mtil^{B,1}_{02}
\ftil^2_{ij} \mtil^{E,j}_{pn} 
- \hspace{-1.5ex}  \sum_{\substack{i+p=l-1; \, 2 \leq j < n}} \hspace{-1.5ex} 
(-1)^{i}  \mtil^{B,1}_{11}  \ftil^1_{ij} \mtil^{E,j}_{pn}.\\
\end{split} \end{equation}
Forming the sum of \eqref{equ:zln_coc1}, \eqref{equ:zln_coc2}, and
\eqref{equ:zln_coc3} shows the assertion. 
\end{proof}

\begin{lemma} \label{lem:zln_works} In the situation of the proof of
  Proposition \ref{prop:exis_str}, the $\mtil_{ln}^{E,1}$ together with the
  $\mtil^{E,1}_{ij}$ for $i+j < l+n$ satisfy \ref{equ:obj_rel} for $u+v
  \leq l+n+1$.
\end{lemma}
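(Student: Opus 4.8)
The plan is to translate \ref{equ:obj_rel} into the $\mtil$-language of Lemma \ref{lem:vari_obj_rel} and then to verify the new instances by applying the injective cycle selection map and reorganizing, as in the proof of Lemma \ref{lem:zln_cocycle}. By Lemma \ref{lem:vari_obj_rel}, the relation \ref{equ:obj_rel} in bidegree $(u,v)$ is equivalent to the vanishing on $SE^{\tensor v}$ of
\[
S_{uv}:=\sum_{i+p=u}(-1)^{i}\,\mtil^{E,1}_{i}\mtil^{E}_{p},
\]
a map with values in $SE[u,2-(u+v)]$. For $u+v\leq l+n$ every structure map occurring effectively in $S_{uv}$ has index sum strictly below $l+n$ — a term whose outer or inner factor had index sum $l+n$ would force the other factor to be $\mtil^{E,1}_{01}=0$ — so $S_{uv}=0$ by the inductive hypothesis of the proof of Proposition \ref{prop:exis_str}. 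Thus the content of the lemma is the case $u+v=l+n+1$. There the same observation shows that the newly defined map $\mtil^{E,1}_{ln}$ occurs in $S_{uv}$ only composed with $\mtil^{E,1}_{11}$ or $\mtil^{E,1}_{02}$, hence only for $(u,v)\in\{(l+1,n),(l,n+1)\}$, and similarly for each of the other structure maps of index sum $l+n$.

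Fix $(u,v)$ with $u+v=l+n+1$. Since $\ftil^{1}_{01}$ — which is, up to suspension, the cycle selection homomorphism $f_{01}$ — is injective, it suffices to prove $\ftil^{1}_{01}S_{uv}=0$. Every term of $\ftil^{1}_{01}S_{uv}$ contains a composite $\ftil^{1}_{01}\mtil^{E,1}_{i'j'}$ with $i'+j'\leq l+n$ (a composite $\ftil^{1}_{01}\mtil^{E,1}_{uv}$ of index sum $l+n+1$ could only appear against $\mtil^{E,1}_{01}=0$). I would rewrite each such $\ftil^{1}_{01}\mtil^{E,1}_{i'j'}$ as $\mtil^{B,1}_{01}\ftil^{1}_{i'j'}+\ztil_{i'j'}$: for $i'+j'<l+n$ this is \ref{equ:mor_rel} in the form $\mtil_{01}^{B,1}\ftil_{i'j'}^{1}=\ftil_{01}^{1}\mtil_{i'j'}^{E,1}-\ztil_{i'j'}$ recorded after \ref{equ:zln}; for $i'+j'=l+n$ it is the defining property of $\mtil^{E,1}_{i'j'}$, namely that $\ftil^{1}_{01}\mtil^{E,1}_{i'j'}-\ztil_{i'j'}$ lies in the image of $\mtil^{B,1}_{01}$ (here $\ftil^{1}_{i'j'}$ is not yet constructed, but only the existence of a preimage is used — which is also why one cannot replace this by \ref{equ:mor_rel} in degree $l+n$). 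After this substitution $\ftil^{1}_{01}S_{uv}$ splits into a $\mtil^{B,1}_{01}$-term and a sum of contributions $\ztil_{i'j'}(\eins^{\tensor r}\tensor\mtil^{E,1}_{ps}\tensor\eins^{\tensor t})$. Expanding the latter through the definition \ref{equ:zln} of the $\ztil$'s and reorganizing, using the bidga identities for $B$ (the two Leibniz rules, associativity, $m_{01}^{B}m_{01}^{B}=m_{11}^{B}m_{11}^{B}=0$, $m_{01}^{B}m_{11}^{B}=m_{11}^{B}m_{01}^{B}$), the relations \ref{equ:obj_rel} for $E$ in index sums $\leq l+n$, and the cocycle identities $\mtil^{B,1}_{01}\ztil_{i'j'}=0$ for $i'+j'\leq l+n$ of Lemma \ref{lem:zln_cocycle}, one finds that all terms cancel and is left with $0=0$.

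The main obstacle is this final reorganization. It is a triple summation — over the split $i+p=u$, over the two slots into which a structure map may be inserted, and over the internal tensor positions $r,t$ — and the Koszul signs produced by commuting $\sigma$ past the various $\mtil^{\cdot,1}_{ij}$ must be tracked precisely, in the spirit of the computations in the proofs of Lemmas \ref{lem:vari_obj_rel} and \ref{lem:zln_cocycle}, of which this is a longer variant. A secondary difficulty is organizational: since the maps $\ftil^{1}_{ln}$ are not available when this lemma is invoked, the argument must be carried out with the cocycles $\ztil_{ln}$ and their classes in $E=H_{*}(B,m_{01}^{B})$ throughout, and one must keep track of the fact that the potentially undefined maps $\mtil^{E,1}_{uv}$ with $u+v=l+n+1$ occur only against $\mtil^{E,1}_{01}=0$.
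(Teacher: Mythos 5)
Your plan is essentially the paper's proof. The paper likewise reduces to $u+v=l+n+1$ (noting $\mtil^{E,1}_{01}=0$ kills everything else and all undefined maps), represents each composite $\mtil^{E,1}_{ij}\mtil^{E,j}_{pv}$ in $B$ by $\ztil_{ij}\mtil^{E,j}_{pv}$ -- resp.\ by $\mtil^{B,1}_{02}(\ftil^1_{01}\tensor\ztil_{u,v-1})+\mtil^{B,1}_{02}(\ztil_{u,v-1}\tensor\ftil^1_{01})$ and $\mtil^{B,1}_{11}\ztil_{u-1,v}$ when the outer factor is $\mtil^{E,1}_{02}$ or $\mtil^{E,1}_{11}$ -- then expands the $\ztil$'s by their definition and checks that the total lands in $\image\mtil^{B,1}_{01}$. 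Your device of precomposing with $\ftil^1_{01}$ is only a cosmetic variant of working with these representatives. Two caveats. First, the computation does not give $\ftil^1_{01}S_{uv}=0$ literally but only that it is a coboundary, so bare injectivity of $f_{01}$ does not close the argument; you need that $\ftil^1_{01}$ is a section of the quotient $\kernel\mtil^{B,1}_{01}\to SE$ (equivalently $\image\ftil^1_{01}\cap\image\mtil^{B,1}_{01}=0$), which is how the paper concludes. Second, and more substantively, the step you defer as ``the main obstacle'' -- expanding the $\ztil$'s and verifying the sign-sensitive cancellation -- is the entire content of the paper's proof (its displays \eqref{equ:zln_right1}, \eqref{equ:zln_right3}, and \eqref{equ:zln_right4}); since everything else in the lemma is bookkeeping, your proposal identifies the correct strategy and all the needed ingredients but asserts, rather than establishes, the one claim that actually requires proof.
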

\begin{proof} 
Since $\mtil^{E, 1}_{01} =0$, we have to verify
\begin{equation}
\sum_{\substack{i+p = u \\i+j \geq 2; \,j<v}} (-1)^i \mtil^{E, 1}_{ij} \mtil^{E,j}_{pv} = 0
\end{equation}
for $u+v = l+n+1$. The term $\mtil^{E,1}_{ij} \mtil^{E,j}_{pv}$
is represented by $\ztil_{ij} \mtil^{E,j}_{pw}$ if $i+j > 2$, by
$\mtil^{B,1}_{02} (\ftil^1_{01} \tensor \ztil_{u,v-1})+\mtil^{B,1}_{02}
(\ztil_{u,v-1} \tensor \ftil^1_{01})$ if $i=0$ and $j=2$, and by
$\mtil^{B,1}_{11} \ztil_{u-1,v}$ if $i=1$ and $j=1$. We evaluate these
expressions using the already established relations of the
$\mtil^{E_j}_{pq}$ and the definition of $\ztil_{ij}$. First we
observe
\begin{equation} \label{equ:zln_right1} \begin{split}
- \mtil^{B,1}_{11} \ztil_{u-1,v} 
= - \mtil^{B,1}_{11} \mtil^{B,1}_{02} \ftil^2_{u-1,v} 
+ \hspace{-1.5ex}  \sum_{\substack{i+p=u-1; \,  i+j \geq 2}}   \hspace{-1.5ex} 
(-1)^i\mtil^{B,1}_{11} \ftil^1_{ij} \mtil^{E,j}_{pv}.
\end{split} \end{equation}
Next we deduce
\begin{equation} \label{equ:zln_right3} \begin{split}
&\mtil^{B,1}_{02} (\ftil^1_{01} \tensor  \ztil_{u,v-1}) 
+  \mtil^{B,1}_{02} ( \ztil_{u,v-1}  \tensor  \ftil^1_{01})  \\
&+ \sum_{\substack{i+p=u; \, j+q=v  \\ i+j \geq 2}} \left(
(-1)^i \mtil^{B,1}_{02} (\ftil^1_{ij} \tensor \ftil^1_{01} \mtil^{E,1}_{pq})
+ \mtil^{B,1}_{02} (\ftil^1_{01} \mtil^{E,1}_{pq} \tensor \ftil^1_{ij}) \right) \\
=& \sum_{\substack{i_1+i_2+p=u; \, j_1+q=v \\ i_2+j_2 \geq 2}} 
(-1)^{i_1+i_2+1}  \mtil^{B,1}_{02} (\ftil^1_{i_1 j_1} \tensor
\ftil^1_{i_2 j_2}) (\eins^{\tensor j_1} \tensor \mtil^{E,j_2}_{pq} ) \\
&+ \sum_{\substack{i_1+i_2+p=u; \, j_2+q=v \\ i_1 + j_1 \geq 2}}
(-1)^{i_1+i_2+1}  \mtil^{B,1}_{02} (\ftil^1_{i_1 j_1} \tensor
\ftil^1_{i_2 j_2}) (\mtil^{E,j_1}_{pq} \tensor \eins^{\tensor j_2}) \\
&+ \mtil^{B,1}_{11} \mtil^{B,1}_{02} \ftil^2_{u-1,v} + 
\sum_{\substack{i_1+i_2 =u;\, j_1 + j_2 = v \\ i_s +j_s \geq 2}}
\mtil^{B,1}_{01}  \mtil^{B,1}_{02} (\ftil^1_{i_1 j_1} \tensor
\ftil^1_{i_2 j_2}).
\end{split} \end{equation}

The last step is to calculate
\begin{equation} \label{equ:zln_right4} \begin{split}
&\hspace{-1.5ex} \sum_{\substack{i+p=u; \, i+j > 2}}  \hspace{-1.5ex}
(-1)^i \ztil_{ij} \mtil^{E,j}_{pv} \\
=& \sum_{\substack{i+p=u;\,j+q=v \\ i+j \geq 2}} \left(
(-1)^i \mtil^{B,1}_{02} (\ftil^1_{i j} \tensor \ftil^1_{01} \mtil^{E,1}_{pq} ) 
+  \mtil^{B,1}_{02} (\ftil^1_{01} \mtil^{E,1}_{pq} \tensor
\ftil^1_{ij}) \right)\\
&+ \sum_{\substack{i_1 +i_2 +p=u;\, j_1+q=v \\ i_2+j_2 \geq 2}} (-1)^{i_1 +i_2}  \mtil^{B,1}_{02} (\ftil^1_{i_1 j_1} \tensor \ftil^1_{i_2 j_2})
(\eins^{\tensor j_1} \tensor \mtil^{E,j_2}_{pq} ) \\
&+ \sum_{\substack{i_1 +i_2 +p=u;\, j_2+q=v \\ i_1+j_1 \geq 2}} (-1)^{i_1
  +i_2} \mtil^{B,1}_{02} (\ftil^1_{i_1 j_1} \tensor \ftil^1_{i_2 j_2})
 (\mtil^{E,j_1}_{pq} \tensor \eins^{\tensor j_2})\\
& - \hspace{-1.5ex} \sum_{\substack{i+p=u; \, i+j \geq 2}}\hspace{-1.5ex} 
(-1)^i \mtil^{B,1}_{11} \ftil^1_{ij} \mtil^{E,j}_{pv}. 
\end{split} \end{equation}
Adding up \eqref{equ:zln_right1},
\eqref{equ:zln_right3}, and \eqref{equ:zln_right4} shows that the sum the
representatives for the terms in the desired formula has values in
coboundaries.
\end{proof}

\begin{lemma} \label{lem:yln_cycle}
In the situation of the proof of Proposition \ref{prop:funct_str},
$m^{C,1}_{01} \ytil_{ln}=0$ holds. 
\end{lemma}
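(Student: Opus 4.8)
The plan is to mimic the proof of Lemma~\ref{lem:zln_cocycle} almost verbatim. Recall that this cocycle condition is exactly what is needed so that, in the inductive step of the proof of Proposition~\ref{prop:funct_str}, the expression $\ytil_{ln}$ represents a class that can serve as the next component $\betatil^1_{ln}$ of the \dainfalg map $\beta$. I would apply $\mtil^{C,1}_{01}$ to $\ytil_{ln}$ term by term and rewrite each resulting expression using the relations that hold by the induction hypothesis: (a) the bidga relations in $C$, namely $\mtil^{C,1}_{01}\mtil^{C,1}_{01}=0$, the commutation $\mtil^{C,1}_{01}\mtil^{C,1}_{11}=\mtil^{C,1}_{11}\mtil^{C,1}_{01}$, and the Leibniz rule $\mtil^{C,1}_{01}\mtil^{C,1}_{02}=-\mtil^{C,1}_{02}(\eins\tensor\mtil^{C,1}_{01}+\mtil^{C,1}_{01}\tensor\eins)$, all of which are instances of \ref{equ:obj_rel} with $u+v\le 3$; (b) the map relations \ref{equ:mor_rel} for $f\colon E\to B$ and $g\colon F\to C$, together with the compatibility of $\alpha$ with both differentials of $C$; (c) the already-verified homotopy relations \ref{equ:hty} for bidegrees $(u,v)$ with $u+v<l+n$, in the rewritten form $\mtil^{C,1}_{01}\htil^1_{uv}=(-1)^u\gtil^1_{01}\betatil^1_{uv}-\ytil_{uv}$; and (d) the \dainfalg relations \ref{equ:obj_rel} for $E$ and the map relations \ref{equ:mor_rel} for $\beta$ at bidegrees $<l+n$.

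Concretely: the $\mtil^{C,1}_{11}\htil^1_{l-1,n}$-summand, after commuting $\mtil^{C,1}_{01}$ past $\mtil^{C,1}_{11}$ and substituting the homotopy relation \ref{equ:hty} at bidegree $(l-1,n)$, yields $\mtil^{C,1}_{11}\bigl((-1)^{l-1}\gtil^1_{01}\betatil^1_{l-1,n}-\ytil_{l-1,n}\bigr)$, which I would then expand via the definition of $\ytil_{l-1,n}$. The $\alphaftil^1_{ln}$-summand becomes $\alphatil^1_{01}\mtil^{B,1}_{01}\ftil^1_{ln}$ after moving $\mtil^{C,1}_{01}$ through $\alpha$, and then one applies the rewritten map relation for $f$, exactly as the $\mtil^{B,1}_{02}$-terms were handled in \eqref{equ:zln_coc1}. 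The two bilinear $\mtil^{C,1}_{02}(-\tensor-)$-terms, after the Leibniz rule, produce expressions in which each tensor factor has been run through its own structure relation; in particular the $\htil^1$-factors again contribute $\ytil$- and $\gtil^1_{01}\betatil^1$-pieces. Finally the $\gtil^1_{ij}\betatil^j_{pn}$- and $\htil^1_{ij}\mtil^{E,j}_{pn}$-terms are rewritten via the map relations for $g$ and $\beta$ and the \dainfalg relations for $E$. Grouping the contributions into three or four displayed identities, in exact analogy with \eqref{equ:zln_coc1}, \eqref{equ:zln_coc2}, \eqref{equ:zln_coc3}, one checks that the $\ytil$-type and $\ztil$-type expressions pair off and that the total vanishes.

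The main obstacle is purely organizational. Since $\ytil_{ln}$ intertwines the data of $f$, $g$, $\alpha$, $\beta$ and $h$, there are strictly more groups of summands than in Lemma~\ref{lem:zln_cocycle}, and one must keep all Koszul signs consistent while unwinding the multi-fold tensor decompositions hidden in $\gbetatil^1_{ij}$ (tensor powers of $g$ composed with tensor powers of $\beta$) and in $\mtil^{E,j}_{pn}$. No new idea beyond that of Lemma~\ref{lem:zln_cocycle} is required: one verifies that the same cancellation pattern persists in this richer situation, so I would present the proof as a sequence of displayed computations, one per group of summands, ending with the remark that their sum telescopes to zero.
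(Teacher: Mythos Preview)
Your plan is correct and matches the paper's own proof: the author likewise applies $\mtil^{C,1}_{01}$ to each of the six summands of $\ytil_{ln}$, rewrites using the bidga relations for $C$, the map relations for $\alpha f$ and $g\beta$, the homotopy relation \ref{equ:hty} at lower bidegrees, and the \dainf{}-relations for $E$, and then observes that the resulting displayed identities sum to zero. The only minor discrepancy is bookkeeping---the paper organizes the computation into six displays (one per summand of $\ytil_{ln}$) rather than the three or four you anticipate---but the strategy and the relations invoked are identical.
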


\begin{proof}
We apply $\mtil_{01}^{C,1}$ to the six terms of $\ytil_{ln}$. The first gives
\begin{equation} \label{equ:yln_cycleT1}
\begin{split}
& \mtil_{01}^{C,1}  \mtil_{11}^{C,1} \htil^1_{l-1,n} \\
=& - (-1)^l \mtil_{11}^{C,1} \gbetatil_{l-1,n}^{1} 
+ (-1)^l \mtil_{11}^{C,1} \alphaftil_{l-1,n}^{1}
+ (-1)^l \hspace{-1.5ex} \sum_{i+p=l-1} \hspace{-1.5ex} \mtil_{11}^{C,1} \htil_{ij}^{1}
\mtil^{E,j}_{pn} \\
&- \sum_{\substack{i+p=l-1\\j+q=n}} \mtil_{11}^{C,1} \mtil_{02}^{C,1} ( \gbetatil_{ij}^1
\tensor \htil^1_{pq})
- \sum_{\substack{i+p=l-1\\j+q=n}} (-1)^p \mtil_{11}^{C,1} \mtil_{02}^{C,1} ( \htil^1_{ij}
\tensor \alphaftil_{pq}^1)\end{split}
\end{equation}
The second gives
\begin{equation} \label{equ:yln_cycleT2}
\begin{split}
&(-1)^l \mtil_{01}^{C,1} \alphaftil_{ln}^1 \\
=& - (-1)^{l} \mtil_{11}^{C,1} \alphaftil_{l-1,n}^1 
 - (-1)^{l}  \mtil_{02}^{C,1} \alphaftil_{ln}^2 
+ \sum_{i+p=l} (-1)^p \alphaftil_{ij}^1 \mtil_{pn}^{E, j}
\end{split}
\end{equation}
The third gives
\begin{equation} \label{equ:yln_cycleT3}
\begin{split}
&(-1)^{l+1}  \sum_{\substack{i+p=l\\ i+j \geq 2}}
\mtil_{01}^{C,1} \gtil^{1}_{ij} \betatil^j_{pn} \\
=& (-1)^{l} \mtil_{11}^{C,1} \gbetatil_{l-1,n}^{1} 
+ (-1)^{l}  \mtil_{02}^{C,1} \gbetatil_{ln}^{2}
- \sum_{i+p=l} (-1)^p \gbetatil_{ij} \mtil^{E,j}_{pn}
\end{split}
\end{equation}
The fourth gives
\begin{equation} \label{equ:yln_cycleT4}
\begin{split}
&  (-1)^{l} \sum_{i+p=l} (-1)^i \mtil_{01}^{C,1} \htil^1_{ij} \mtil^{E,1}_{pn}  \\
=& \sum_{i+p=l} (-1)^p \gbetatil_{ij}^1 \mtil^{E,j}_{pn} 
- \sum_{i+p=l} (-1)^p \alphaftil_{ij}^1 \mtil_{pn}^{E, j} 
- (-1)^l \hspace{-1.5ex} \sum_{i+p=l-1} \hspace{-1.5ex} \mtil_{11}^{C,1} \htil_{ij}^{1}
\mtil^{E,j}_{pn} \\
&- (-1)^l \sum_{i_1+i_2 +p=l} \mtil_{02}^{C,1} (  \gbetatil_{i_1j_1}^1 \tensor 
\htil^1_{i_2 j_2}) \mtil_{pn}^{E, j_1+j_2} \\
&- (-1)^l \sum_{i_1+i_2 +p=l} (-1)^{i_2} \mtil_{02}^{C,1} ( \htil^1_{i_1 j_1}
\tensor \alphaftil_{i_2 j_2}^{1}) \mtil_{pn}^{E, j_1+j_2}\\
\end{split}
\end{equation}
The fifth gives
\begin{equation} \label{equ:yln_cycleT5}
\begin{split}
&\sum_{\substack{i+p=l\\j+q=n}} \mtil_{01}^{C,1} \mtil_{02}^{C,1} ( \gbetatil_{ij}^1
\tensor \htil^1_{pq})\\
=& \sum_{\substack{i+p=l-1\\j+q=n}} \mtil_{11}^{C,1} \mtil_{02}^{C,1} ( \gbetatil_{ij}^1
\tensor \htil^1_{pq}) 
+  (-1)^l \sum_{\substack{i+p=l\\j+q=n}} \mtil_{02}^{C,1} ( \gbetatil_{ij}^1
\tensor \alphaftil_{pq}^{1}) \\
&- (-1)^l \mtil_{02}^{C,1} \gbetatil_{ln}^{2}
+ (-1)^l \sum_{i_1+i_2 +p=l} \mtil_{02}^{C,1} (  \gbetatil_{i_1j_1}^1 \tensor 
\htil^1_{i_2 j_2}) \mtil_{pn}^{E, j_1+j_2}\\
&+  \sum_{\substack{i+p_1+p_2=l\\ j+q_1 +q_2=n}} (-1)^{p_2}
\mtil_{02}^{C,1} (\eins \tensor \mtil_{02}^{C,1}) (\gbetatil_{ij}^1 \tensor \htil^1_{p_1q_1}
\tensor \alphaftil_{p_2q_2}^{1})
\end{split}
\end{equation}
The sixth gives
\begin{equation} \label{equ:yln_cycleT6}
\begin{split}
&\sum_{\substack{i+p=l\\j+q=n}} (-1)^p \mtil_{01}^{C,1} \mtil_{02}^{C,1} ( \htil^1_{ij}
\tensor \alphaftil_{pq}^1)\\
=& \sum_{\substack{i+p=l-1\\j+q=n}} \hspace{-1ex}(-1)^p \mtil_{11}^{C,1} \mtil_{02}^{C,1} ( \htil^1_{ij}
\tensor \alphaftil_{pq}^{1})
- (-1)^l \sum_{\substack{i+p=l\\j+q=n}}\hspace{-1ex} \mtil_{02}^{C,1} ( \gbetatil_{ij}^1
\tensor \alphaftil_{pq}^{1})\\
&+ (-1)^l \mtil_{02}^{C,1} \alphaftil_{ln}^{2}
+ \hspace{-1.5ex} \sum_{i_1+i_2 +p=l} \hspace{-2.5ex} (-1)^{i_1 + p}\mtil_{02}^{C,1} ( \htil^1_{i_1 j_1}
\tensor \alphaftil_{i_2 j_2}^{1}) \mtil_{pn}^{E, j_1+j_2}\\
&+  \sum_{\substack{i_1+i_2+p=j\\ j_1+j_2+q=n}} (-1)^p
\mtil_{02}^{C,1} (\mtil_{02}^{C,1} \tensor \eins) (\gbetatil_{i_1 j_1}^1 \tensor \htil^1_{i_2 j_2}
\tensor \alphaftil_{pq}^{1})
\end{split}
\end{equation}
Indeed, the sum of \eqref{equ:yln_cycleT1}, \dots,  \eqref{equ:yln_cycleT6} is
zero. 
\end{proof}

\begin{lemma} \label{lem:yln_works} In the situation of the proof of
  Proposition \ref{prop:funct_str}, the $\betatil^1_{ln}$ represented by
  $y_{ln}$ satisfies \ref{equ:mor_rel} for $u+v=l+n+1$.
\end{lemma}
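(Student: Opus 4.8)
The plan is to imitate the proof of Lemma \ref{lem:zln_works}, with the extra bookkeeping forced by the homotopy $\htil$ and by the two composites $\gbetatil$ and $\alphaftil$. By Lemma \ref{lem:vari_map_rel}, the instances of \ref{equ:mor_rel} to be verified, i.e.\ those with $u+v=l+n+1$, amount after restriction to $SE^{\tensor v}$ to the identities
\[
\sum_{i+p=u}\ \sum_{1\leq q\leq v}(-1)^{i}\,\betatil^{1}_{iq}\,\mtil^{E,q}_{pv}
\;=\;\sum_{i+p=u}\ \sum_{1\leq j\leq v}\mtil^{F,1}_{ij}\,\betatil^{j}_{pv}.
\]
Since $\mtil^{E,1}_{01}=0=\mtil^{F,1}_{01}$, only the $\betatil^{1}_{ij}$ with $i+j\leq l+n$ occur, and the newly defined $\betatil^{1}_{ln}$ appears in only a small number of explicit positions --- for instance as $\betatil^{1}_{ln}\mtil^{E,n}_{0,n+1}$, as $\mtil^{F,1}_{11}\betatil^{1}_{ln}$, and inside the $\mtil^{F,1}_{02}$-summand. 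The first step is to write these out.

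The second step is to reduce the identity to a congruence modulo $\image\mtil^{C,1}_{01}$. The cycle selection homomorphism $\gtil^{1}_{01}\colon SF\to SC$ for $C$ has image in $\kernel\mtil^{C,1}_{01}$, and its composite with the quotient map $\pi\colon\kernel\mtil^{C,1}_{01}\to SF$ is the identity on $SF$; hence $\gtil^{1}_{01}\pi(x)\equiv x$ modulo $\image\mtil^{C,1}_{01}$ for every $x\in\kernel\mtil^{C,1}_{01}$. It therefore suffices to show that, after applying $\gtil^{1}_{01}$, the difference of the two sides of the displayed identity becomes a vertical coboundary of $C$.

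For the substitution step I would use, for $i+j<l+n$, the already established instances of \ref{equ:hty} in the rewritten form $\mtil^{C,1}_{01}\htil^{1}_{ij}=(-1)^{i}\gtil^{1}_{01}\betatil^{1}_{ij}-\ytil_{ij}$, which give $\gtil^{1}_{01}\betatil^{1}_{ij}\equiv(-1)^{i}\ytil_{ij}$; and for $i+j=l+n$ the fact that the $\betatil^{1}_{ij}$ were defined as the classes of the $\mtil^{C,1}_{01}$-cycles $(-1)^{i}\ytil_{ij}$ (Lemma \ref{lem:yln_cycle}), which gives the same congruences. In addition I would invoke that $\gtil\colon F\to C$ is a \dainfalg map (so its relations \ref{equ:mor_rel} hold), that $\alpha\colon B\to C$ is a bidga map, that $f\colon E\to B$ is a \dainfalg map, and the relevant unit conditions. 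Substituting all of this rewrites $\gtil^{1}_{01}$ applied to the difference of the two sides as a sum of terms built only from the $\ytil_{ij}$, the $\htil^{1}_{ij}$, the structure maps $\mtil^{C,1}$, and the composites $\gbetatil$ and $\alphaftil$.

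The last step is to expand each $\ytil_{ij}$ by its defining six-term formula and to regroup, exactly as in the displays \eqref{equ:zln_coc1}--\eqref{equ:zln_coc3}, \eqref{equ:zln_right1}--\eqref{equ:zln_right4} and \eqref{equ:yln_cycleT1}--\eqref{equ:yln_cycleT6}: the $\mtil^{C,1}_{11}$-type terms, the $\mtil^{C,1}_{02}$-type terms and the `inner' $\mtil^{E,j}_{pn}$-type terms each recombine via the already verified instances of \ref{equ:mor_rel} and \ref{equ:hty} and the defining relations of $\gtil$, $\alpha$ and $f$, so that what is left is manifestly of the form $\mtil^{C,1}_{01}(\,\cdot\,)$; applying $\pi$ then yields \ref{equ:mor_rel} for $u+v=l+n+1$. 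I expect the main obstacle to be purely combinatorial: as already in Lemma \ref{lem:yln_cycle}, one must keep track of a large number of terms carrying Koszul signs and cancel them across the $\gtil$-relations, the homotopy relations and the $\alpha$- and $f$-relations simultaneously. The computation is longer than in Lemma \ref{lem:zln_works} precisely because $\htil$ and both composites participate at once, but there is no new conceptual ingredient.
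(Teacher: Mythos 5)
Your proposal matches the paper's (very terse) argument: lift both sides of \ref{equ:mor_rel} to $C$ via the cycle selection map, replace $\betatil^1_{ij}$ by the representatives $(-1)^i\ytil_{ij}$ and $\mtil^{F,1}_{ij}$ by the $\ztil_{ij}$, and check that the difference of the resulting expressions is a vertical coboundary of $C$ (the paper identifies it explicitly as $\mtil^{C,1}_{01}\bigl(\gbetatil^1_{uv}-\sum(-1)^i\htil^1_{ij}\mtil^{E,j}_{pv}\bigr)$). Your reduction, choice of congruences, and regrouping strategy are exactly the paper's, so this is essentially the same proof.
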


\begin{proof}
We have to show that $\beta$ is an \dainf{}-map, that is, 
\[ \sum_{i+p=u} (-1)^i \betatil^1_{ij} \mtil^{E,j }_{pv} = \sum_{i+p=u}
\mtil^{F, 1}_{ij} \betatil^j_{pv}. \]
We use the representing cocycles $(-1)^i\ytil_{ij}$ and $\ztil_{ij}$. 
A direct calculation shows
\[
\sum_{i+p=u} (-1)^i \ytil_{ij} \mtil^{E,j }_{pv} - \sum_{i+p=u}
\ztil_{ij} \betatil^j_{pv} 
= \mtil_{01}^{C,1} \gbetatil^1_{uv}- \sum_{i+p=u} (-1)^i \mtil^{C,1}_{01} \htil^{1}_{ij} \mtil^{E,j}_{pv}
\]
\end{proof}
\bibliographystyle{abbrv} 
\bibliography{dainf05}
\end{document}